\title[Homological mirror symmetry for open Riemann surfaces]{Homological mirror symmetry for open Riemann surfaces from pair-of-pants
decompositions}
\author{Heather Lee}
\date{}
\email{heatherlee@berkeley.edu}
\numberwithin{equation}{section}
\newtheorem{theorem}{Theorem}[section]
\newtheorem{lemma}[theorem]{Lemma} 
\newtheorem{corollary}[theorem]{Corollary}
\theoremstyle{definition}
\newtheorem{remark}[theorem]{Remark}
\DeclareMathOperator{\Coh}{Coh}
\DeclareMathOperator{\Coker}{coker}
\DeclareMathOperator{\Conv}{Conv}
\DeclareMathOperator{\Crit}{Crit}
\DeclareMathOperator{\Ext}{Ext}
\DeclareMathOperator{\Hom}{Hom}
\newcommand{\sHom}{\mathcal Hom}
\DeclareMathOperator{\interior}{int}
\DeclareMathOperator{\ind}{ind}
\DeclareMathOperator{\Log}{Log}
\newcommand{\bC}{\mathbb C}
\newcommand{\Os}{\mathcal O}
\newcommand{\cA}{\mathcal A}
\newcommand{\cB}{\mathcal B}
\newcommand{\cC}{\mathcal C}
\newcommand{\cD}{\mathcal D}
\newcommand{\cE}{\mathcal E}
\newcommand{\cF}{\mathcal F}
\newcommand{\cG}{\mathcal G}
\newcommand{\cL}{\mathcal L}
\newcommand{\cJ}{\mathcal J}
\newcommand{\cK}{\mathcal K}
\newcommand{\cQ}{\mathcal Q}
\newcommand{\cP}{\mathcal P}
\newcommand{\cT}{\mathcal T}
\newcommand{\nufun}{fun}
\newcommand{\tcE}{\tilde{\mathcal E}}
\newcommand{\tcC}{\tilde{\mathcal C}}
\begin{document}

\begin{abstract} 
Given a punctured Riemann surface with a pair-of-pants decomposition, we compute its wrapped Fukaya category in a suitable model by reconstructing it from those of various pairs of pants.  The pieces are glued together in the sense that the restrictions of the wrapped Floer complexes from two adjacent pairs of pants to their adjoining cylindrical piece agree.  The $A_\infty$-structures are given by those in the pairs of pants.  The category of singularities of the mirror Landau-Ginzburg model can also be constructed in the same way from local affine pieces that are mirrors of the pairs of pants.
\end{abstract}

\maketitle
\tableofcontents

\section{Introduction}

Mirror symmetry is a duality between symplectic and complex geometries, and the homological mirror symmetry (HMS) conjecture was formulated by Kontsevich \cite{Ko94} to capture the phenomenon by relating two triangulated categories.   This first formulation of HMS is for pairs of Calabi-Yau manifolds $(X,X^{\vee})$ and it predicts two equivalences: the derived Fukaya category of $X$ (which depends only on its symplectic structure) is equivalent to the bounded derived category of coherent sheaves of $X^{\vee}$ (which depends only on its complex structure), and the bounded derived category of coherent sheaves of $X$ is equivalent to the derived Fukaya category of $X^{\vee}$.

A non-Calabi-Yau manifold $X$ can also belong to a mirror pair $(X,(X^\vee, W))$, where $(X^{\vee},W)$ is a Landau-Ginzburg model  consisting of a non-compact manifold $X^{\vee}$ and a holomorphic function $W:X^\vee \to \mathbb C$ called the superpotential.  HMS has been extended to cover Fano manifolds by Kontsevich \cite{Ko98} based on works by Batyrev \cite{Ba94}, Givental \cite{Gi96}, Hori-Vafa \cite{HV00}, and others, and more recently to cover general type manifolds \cite{Ka07, KKOY09}. The complex side of $(X^{\vee},W)$ is described by Orlov's triangulated category of singularities of the singular fiber $W^{-1}(0)$, or equivalently the category of matrix factorizations $MF(X^\vee, W)$ \cite{Or04}.  The symplectic side of $(X^{\vee},W)$ is described by the derived Fukaya-Seidel category \cite{Se08} of Lagrangian vanishing cycles associated with $W$.

Another recent discovery is that, in the case where $X$ is an open manifold, the symplectic side of $X$ needs to be described by its wrapped Fukaya category.  (Similarly, when the fibers of $W:X^\vee \to \mathbb C$ are open, the symplectic side of $(X^\vee, W)$ is determined by its fiberwise wrapped Fukaya category \cite{AA}.)  The wrapped Fukaya $A_\infty$-category is an extension of the Fukaya category constructed by Abouzaid and Seidel \cite{AS10} for a large class of non-compact symplectic manifolds known as Liouville manifolds. Examples of such manifolds include cotangent bundles, complex affine algebraic manifolds, and many more general properly embedded submanifolds of $\mathbb C^n$. A Liouville manifold $X$ is equipped with a Liouville $1$-form $\lambda$, has an exact symplectic form $\omega=d\lambda$ and a complete Liouville vector field $Z$ determined by $\iota_Z\omega=\lambda$, and satisfies a convexity condition at infinity. 

In general, the wrapped Fukaya category of a Liouville manifold, $\mathcal W(X)$, is very hard to compute; therefore, it is of much interest to develop sheaf-theoretic techniques to compute $\mathcal W(X)$ by first decomposing $X$ into simpler standard pieces $X=\bigcup_{i\in I} S_i$ and then reconstructing $\mathcal W(X)$ from the wrapped Fukaya categories of the standard pieces.  

Inspired by Viterbo's restriction idea for symplectic cohomology \cite{Vi99}, Abouzaid and Seidel \cite{AS10} constructed an $A_\infty$-restriction functor from a quasi-isomorphic full-subcategory of $\mathcal W(X)$ to $\mathcal W(N)$ for every Liouville subdomain $N$ of $X$ (A Liouville subdomain is a codimension-0 compact submanifold of $X$ whose boundary is transverse to the flow of the Liouville vector field $Z$.  Its completion is a Liouville manifold.)  Suppose $X=S_1\cup S_2$ can be decomposed into two standard Liouville subdomains, then we can hope to compute $\mathcal W(X)$ from $\mathcal W(S_i), i=1,2$, by gluing them along $S_1\cap S_2$ in the sense of matching the images of the restriction functors  $\rho_i: \mathcal W(S_i)\to \mathcal W(S_1\cap S_2)$.  However this procedure cannot be readily implemented due to two obstacles.  First, many pseudo-holomorphic discs that contribute to the $A_\infty$-structures of $\mathcal W(X)$ are not contained in any single $S_i$.  Second, in general it is not always possible to equip $X=\bigcup_{i\in I} S_i$ with a single Liouville structure such that all $S_i$'s are Liouville subdomains.  In addition, the restriction functors could in general have higher order terms which could make the computation intractable.  

We focus on punctured Riemann surfaces that have decompositions into standard pieces which are pairs of pants.  A pair of pants is a sphere with three punctures; its wrapped Fukaya category is computed in \cite{AAEKO13}.  The intersection between two adjacent pairs of pants is a cylinder.   In Section 3, we provide a suitable model for the wrapped Fukaya category of such a punctured Riemann surface and compute it by providing an explicit way to glue together the wrapped Fukaya categories of the pairs of pants.   Thus, our results achieve something very close to the picture conjectured by Seidel \cite{Se12}. (Other instances of sheaf-theoretic computation methods include calculations for cotangent bundles \cite{FO97, NZ09}, and a program proposed by Kontsevich \cite{Ko09} to compute the Fukaya categories in terms of the topology of a Lagrangian skeleton on which they are conjectured to be local.  See also \cite{Na14}, \cite{Ab14}, \cite{Dy15}, and others for recent developments.)

 In Section 4, we show that the category of matrix factorizations $MF(X^\vee, W)$ of the toric Landau-Ginzburg mirror can also be constructed in the same manner from a \v{C}ech cover of $(X^\vee, W)$ by local affine pieces that are mirrors of the various pairs of pants.  We will demonstrate that the restriction from the category of matrix factorizations on an affine piece to that of the overlap with an adjacent piece is homotopic to the corresponding restriction functor for the wrapped Fukaya categories.  In turn, we prove the HMS conjecture that the wrapped Fukaya category of a punctured Riemann surface is equivalent to $MF(X^\vee, W)$; in fact, HMS serves as our guide in developing this sheaf-theoretic method for computing the wrapped Fukaya category.  The HMS conjecture with the A-model being the the wrapped Fukaya category of a punctured Riemann surface has been proved for punctured spheres
and their multiple covers in \cite{AAEKO13}, and punctured Riemann surfaces in \cite{Bo}.  However, our approach yields a new proof that is in some sense more natural, and the main benefit of this approach is that one can hope to extend it to higher dimensions.

\subsection*{Acknowledgement} I thank my thesis adviser Denis Auroux for suggesting this project,  sharing many ideas, and providing me with lots of guidance throughout this work.  I thank Mohammed Abouzaid for his very important suggestions, in particular his insights regarding the restriction functors.  I thank Sheel Ganatra, Yanki Lekili, David Nadler, Daniel Pomerleano, Anatoly Preygel, Paul Seidel, Vivek Shende, Nick Sheridan, Zack Sylvan, and Umut Varolgunes for their very helpful explanations and suggestions, and for the valuable discussions that helped me understand my result better.   This work was partially supported by the NSF grant DMS-1007177.

\section {Review of hypersurfaces, tropical geometry, and mirror symmetry}  \label{sec: tropical}

Let $H$ be a punctured Riemann surface with a pair-of-pants decomposition.  We will focus on the case where $H$ is a hypersurface in $(\mathbb C^*)^2$ that is near the tropical limit, in which case $H$ always has a preferred pair-of-pants decomposition.  Mikhalkin \cite{Mi04} used ideas from tropical geometry to decompose hypersurfaces in projective toric varieties into higher dimensional pairs of pants.  We decompose $H$ into pairs of pants in his style because it is natural for mirror symmetry \cite{Ab06, AAK12} and we hope to generalize our results to hypersurfaces in $(\mathbb C^*)^n$.  In this section, we summarize this decomposition procedure as explained in \cite{Mi04, Ab06, AAK12}.

Consider a family of hypersurfaces 
\begin{equation} \label{eq:f}
H_t=\left\{f_t:= \sum_{\alpha\in A} c_\alpha t^{-\rho(\alpha)}z^\alpha=0\right\}\subset (\mathbb C^*)^2, \ \ \ \ t\to \infty,
\end{equation}
where $c_\alpha\in \mathbb C^*$, $z=(z_1,z_2)\in (\mathbb C^*)^2$, $A$ is a finite subset of $\mathbb Z^2$, $z^\alpha=z_1^{\alpha_1}z_2^{\alpha_2}$, and $t\in \mathbb R_{>0}$.  The function $\rho:A\to \mathbb R$ is the restriction to $A$ of a convex piecewise linear function $\bar \rho: \Conv(A)\to \mathbb R$. 

The family of hypersurfaces $H_t$ has a maximal degeneration for $t\to \infty$ if the maximal domains of linearity of $\bar\rho: \Conv(A)\to \mathbb R$ are exactly the cells of a lattice polyhedral decomposition $\mathcal P$ of the convex hull $\Conv(A)\subset \mathbb R^2$, such that the set of vertices of $\mathcal P$ is exactly $A$ and every cell of $\mathcal P$ is congruent to a standard simplex under $GL(2,\mathbb Z)$ action.

The logarithm map $\Log_t:(\mathbb C^*)^2\to \mathbb R^2$ is defined as $\Log_t(z)=\frac{1}{|\log t|} (\log|z_1|,\log|z_2|)$.  Due to \cite{Mi04, Ru01}, as $t\to \infty$, Log-amoebas $\mathcal A_{t}:=\Log_{t}(H_t)$
 converge in the Gromov-Hausdorff metric to the tropical amoeba $\Pi$, a $1$-dimensional polyhedral complex which is the singular locus of the Legendre transform
\begin{equation} \label{eq:legendre} 
L_\rho(\xi)=\max\{\langle\alpha,\xi\rangle-\rho(\alpha)|\alpha\in A\}.
\end{equation}
An edge of $\Pi$ is where two linear functions from the collection $\{\langle\alpha,\xi\rangle-\rho(\alpha)|\alpha\in A\}$ agree and a vertex is where three linear functions agree.  In fact, $\Pi$ is combinatorially the $1$-skeleton of the dual cell complex of $\mathcal P$, and we can label the components of $\mathbb R^2-\Pi$ by elements of $A$, which are vertices of $\mathcal P$, i.e. 
\begin{equation}
\mathbb R^2-\Pi=\bigsqcup_{\alpha\in A} C_\alpha-\partial C_\alpha.
\end{equation}

We identify $(\mathbb C^*)^2$ with the cotangent bundle of $\mathbb R^2$ with each cotangent fiber quotiented by $(2\pi/|\log t|)\mathbb Z^2$, via $(\mathbb C^*)^2 \cong T^*\mathbb R^2/(2\pi /|\log t|)\mathbb Z^2\cong \mathbb R^2\times (S^1)^{2}$ given by 
\begin{equation} \label{eq: cotangent identification}
z_j=t^{u_j+i\theta_j}, \ j=1,2 \ \ \ \ \ \mapsto \ \ \ (u_j, \theta_j)=\frac{1}{|\log t|}(\log |z_j|,\arg(z_j)).\end{equation}  
This gives a symplectic form on $(\mathbb C^*)^2$ 
\begin{equation}
\omega_t=\frac{i}{2|\log t|^2}\sum_{j=1}^2 d\log z_j\wedge d\log \bar {z}_j=\sum_{j=1}^2 d u_j\wedge d \theta_j,
\end{equation}  
which is invariant under the $(S^1)^{2}$ action with the moment map $(u_1, u_2)=\frac{1}{|\log t|}(\log|z_1|,\log|z_2|)$.

We study a hypersurface $H=f_t^{-1}(0)$ (fix $t\gg 1$) that is near the tropical limit, meaning that it is a member of a maximally degenerating family of hypersurfaces as above, with the property that the amoeba $\mathcal A = \Log_{t}(H) \subset \mathbb R^2$ is entirely contained inside an $\varepsilon$-neighborhood of the tropical hypersurface $\Pi$ which retracts onto $\Pi$, for a small $\varepsilon$.  Then each open component $C_{\alpha, t}$ of $\mathbb R^2-\Log_{t}(H)$ is approximately $C_\alpha-\partial C_\alpha$ as $\partial C_{\alpha, t}$ is contained in an $\varepsilon$-neighborhood of $\partial C_\alpha$.  The monomial 
$c_\alpha t^{-\rho(\alpha)}z^\alpha$ dominates on $\Log_{t}^{-1}(C_{\alpha, t})$.

For SYZ mirror symmetry \cite{SYZ96}, the mirror to $H$ is shown in  \cite{AAK12} to be a Landau-Ginzburg model $(Y, W)$, where $Y$ is a toric variety with its moment polytope being the noncompact polyhedron 
\begin{equation}
\Delta_Y=\{(\xi, \zeta)\in \mathbb R^2\times \mathbb R|\zeta\geq L_\rho(\xi)\}.
\end{equation}
The superpotential $W:Y\to \mathbb C$ is the toric monomial of weight $(0,0,1)$; it vanishes with multiplicity $1$ exactly on the singular fiber $D=W^{-1}(0)$ that is a disjoint union $D=\coprod_{\alpha\in A} D_{\alpha}$ of irreducible toric divisors of $Y$.  Each irreducible toric divisor $D_{\alpha}$ corresponds to a facet of $\Delta_Y$, which corresponds to a connected component of $\mathbb R^2-\Pi$, and $\Crit(W)$ is a union of $\mathbb P^1$'s and $\mathbb C^1$'s corresponding to bounded and unbounded edges of $\Pi$, respectively.   In fact, $(Y,W)$ is equivalent to the mirror of the blow up of $(\mathbb C^*)^2\times \mathbb C$ along $H\times \{0\}$.  

We would like to demonstrate homological mirror symmetry that is the equivalence between the wrapped Fukaya category of $H$ and the triangulated category of the singularities $D_{sg}^b(D)$, which is defined as the Verdier quotient of the bounded derived category of coherent sheaves $D^b(\Coh(D))$ by the subcategory of perfect complexes $\mathfrak{Perf}(D)$.  The category $D_{sg}^b(D)$ is equivalent to the triangulated category of matrix factorizations, $MF(Y,W)$ \cite{Or11}.

\section{The wrapped Fukaya category of the punctured Riemann surface $H$} \label{sec: wFk}

\subsection {Generating Lagrangians} \label{sec: lagrangian}  In this section, we describe a set of Lagrangians that split-generates the wrapped Fukaya category $\mathcal W(H)$.  

There is a projection $\pi: H\to \Pi$ from the Riemann surface to the tropical amoeba such that $\pi$ is a circle fibration over the complement $\Pi\backslash\{\text{vertices}\}$.  This complement consists of open edges, each of which is of the form $\interior(C_\alpha \cap C_\beta)$, whose preimage in $H$ is an open cylinder denoted by
\begin{equation}
 \tilde e_{\alpha\beta}=\pi^{-1}(\interior C_\alpha\cap C_\beta)\cong \interior(C_\alpha\cap C_\beta)\times S^1.
\end{equation}  
The preimage of $\pi$ over each tripod graph at a vertex of $\Pi$ is a pair of pants.

 Recall the defining equation for the Riemann surface $H$ is 
\[
\begin{array}{ll}
0=f_t (z)&= \sum_{\gamma\in A} c_\gamma t^{-\rho(\gamma)}z^\gamma \\
& = c_{\alpha}t^{-\rho(\alpha)}z^{\alpha}\left(1+\frac{c_\beta}{c_\alpha}t^{\rho(\alpha)-\rho(\beta)}z^{\beta-\alpha}+\sum_{\gamma\in A\backslash\{\alpha,\beta\}}\frac{c_\gamma}{c_\alpha} t^{\rho(\alpha)-\rho(\gamma)}z^{\gamma-\alpha}\right),
\end{array}
\]
 for some particular $\alpha$ and $\beta$.  Near the edge $C_\alpha\cap C_\beta$, 
\[
|t^{\rho(\alpha)-\rho(\gamma)}z^{\gamma-\alpha}|=t^{\rho(\alpha)-\rho(\gamma)}t^{\langle \Log_t z,\gamma-\alpha\rangle}=\frac{t^{\langle \Log_t z,\gamma\rangle-\rho(\gamma)}}{t^{\langle \Log_t z,\alpha\rangle-\rho(\alpha)}}
\]
 is very small when $t$ is very large.  Hence $\tilde e_{\alpha\beta}$ in the Riemann surface lies close to the cylinder with the defining equation 
\begin{equation}\label{eqn: edge}
c_\alpha t^{-\rho(\alpha)}z^\alpha+c_\beta t^{-\rho(\beta)} z^\beta=0.
\end{equation}
Since this equation can be written in the form $\frac{c_\alpha}{c_\beta} z^{\alpha-\beta}=-t^{\rho(\alpha)-\rho(\beta)} <0$,  $\tilde e_{\alpha\beta}$ lies close to the cylinder $e_{\alpha\beta}^\Pi$ given by the complexification of $\interior(C_\alpha\cap C_\beta)$ with the argument defined by 
\begin{equation}\label{eqn: edge def} 
 e_{\alpha\beta}^{\Pi}:=\left\{z\in (\mathbb C^*)^2 \left|\right. \Log_t(z)\in \interior(C_\alpha\cap C_\beta), \arg \left(\frac{c_\alpha}{c_\beta}z^{\alpha-\beta}\right) \equiv\pi\right\}.
\end{equation}
In Section 4 of \cite{Ab06}, it is shown that $f_t^{-1}(0)$ are symplectomorphic for all $t$ and that they are symplectomorphic to a piecewise smooth symplectic hypersurface $H_\Pi$ of $(\mathbb C^*)^2$ which projects to $\Pi$.    Each $e_{\alpha\beta}^\Pi$ is a smooth subset of $H_\Pi$ and symplectomorphic to $\tilde e_{\alpha\beta}$. 

When focusing on each bounded edge of $\Pi$, e.g. $\interior(C_\alpha\cap C_\beta)$, we parametrize it by the interval $\{\tau\in (-\epsilon, 4+\epsilon)\}$ for $\epsilon \ll 1$, so $\tilde e_{\alpha\beta}\cong (-\epsilon, 4+\epsilon)\times S^1$.  From now on, we will leave out a small neighborhood around each vertex of $\Pi$ instead of taking the entire $\interior (C_\alpha\cap C_\beta)\times S^1$, and we define the \textbf{edge} $e_{\alpha\beta}$ of the Riemann surface $H$ to be the subset  of $\tilde e_{\alpha\beta}$ corresponding to $\{(\tau,\psi) \in (0,4)\times S^1\}$, with the symplectic form on it being
\begin{equation} \label{eqn: edge symplectic form}
\omega_{\alpha\beta}={c_{\alpha\beta}}d\tau\wedge d\psi.
\end{equation}
The constant $c_{\alpha\beta}$ is determined by the symplectic area of the edge $e_{\alpha\beta}$.  For each bounded edge $e_{\alpha\beta}$, we denote by 
\begin{equation}
n_{\alpha\beta}=|(C_\alpha\cap C_\beta)\cap \mathbb Z^2|-1
\end{equation}
 the integer that is the lattice length of that edge.  Also, denote  
 \begin{equation}
 d_{\alpha,\beta}=\deg \Os(D_{\alpha})|_{D_\alpha\cap D_\beta}.
 \end{equation}
  Choose integers $\delta_{\alpha,\beta}$ and $\delta_{\beta,\alpha}$ satisfying 
  \begin{equation}
  \delta_{\beta,\alpha}-\delta_{\alpha,\beta}=1+d_{\alpha,\beta};
  \end{equation}
 this is well defined since the Calabi-Yau property implies that $d_{\alpha, \beta}+d_{\beta, \alpha}=-2$.

We proceed similarly for unbounded edges of $\Pi$, using $(\tau,\psi) \in (0,\infty)\times S^1$ instead of $(0,4)\times S^1$ (we still use the notation $(\tau,\psi)$ on unbounded edges for convenience).  Each unbounded edge corresponds to a neighborhood of a puncture.  A punctured Riemann surface can be given a Liouville manifold structure  by modeling the neighborhood of each puncture after a cylindrical end $([1,\infty)\times S^1, \tau d\psi)$.

 The Lagrangian objects under consideration are $L_\alpha(k)$, for $\alpha\in A$ and $k\in \mathbb Z$. Each $L_\alpha(k)$ is an embedded curve in $H$ that follows the contour of $C_{\alpha}$ and runs through all edges $e_{\alpha\beta}$, for each $\beta\in A$ such that $C_\beta$ is adjacent to $C_\alpha$.  We require $L_\alpha(k)$ to be invariant under the Liouville flow everywhere in the cylindrical end.

Let $L_{\alpha}(0)$ be a Lagrangian that runs through each edge ``straight'' in a prescribed manner, e.g. with constant $\psi$, also see Remark \ref{rmk: ambience} for an example.  Each $L_\alpha(k)$ differs from $L_\alpha(0)$  in each bounded cylindrical edge $e_{\alpha\beta}\cong (0,4)\times S^1$ by a rotation of $k_{\alpha\beta}$ times in the ``negative'' direction, where $k_{\alpha\beta}=n_{\alpha\beta}k+\delta_{\alpha,\beta}$.   More precisely, $L_\alpha(k)$ differs from $L_\alpha(0)$ in $e_{\alpha\beta}$ by the time-$(k_{\alpha\beta}/c_{\alpha\beta})$ flow of the vector field generated by a Hamiltonian $h_{\alpha\beta}$ that is constant everywhere except on $(1,3)\times S^1$ where  $h_{\alpha\beta}(\tau, \psi)=-\frac{\pi}{2}c_{\alpha\beta}^2(\tau-1)^2$.  Moreover, we make $h_{\alpha\beta}$ smooth with small modifications near $\tau=1, 3$.  Then the time-$(k_{\alpha\beta}/c_{\alpha\beta})$ flow on $(1,3)\times S^1$  is  given by $\varphi^{k_{\alpha\beta}/c_{\alpha\beta}}_{\alpha\beta}(\tau, \psi)=(\tau, \psi-\pi k_{\alpha\beta}(\tau-1))$.  Lastly, we orient each $L_\alpha(k)$ counterclockwise along $\partial C_\alpha$.  A few examples are shown in Figure \ref{fig: lagrangian}.  Applying Abouzaid's generation criterion \cite{Ab10} to a suitably chosen Hochschild cycle, we shown in Appendix A that:

\begin{lemma}\label{lemma: generation}
  The wrapped Fukaya category $\mathcal W(H)$ is split-generated by objects $L_{\alpha}(k)$, $\alpha\in A, k\in\mathbb Z$. 
\end{lemma}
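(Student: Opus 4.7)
The plan is to apply the Abouzaid split-generation criterion~\cite{Ab10}: a full subcategory $\cA\subset \mathcal{W}(H)$ split-generates $\mathcal{W}(H)$ provided the open-closed string map
\[
\mathcal{OC}\colon HH_*(\cA,\cA)\longrightarrow SH^*(H)
\]
hits the unit $1\in SH^0(H)$. Take $\cA$ to be the full subcategory on the objects $\{L_\alpha(k)\}_{\alpha\in A,\,k\in\bZ}$; the task reduces to exhibiting a Hochschild cycle $\phi$ in $\cA$ with $\mathcal{OC}(\phi)=1$.

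I would assemble $\phi$ from local contributions, one per vertex of the tropical amoeba $\Pi$. At each vertex $v$ of $\Pi$, three Lagrangians $L_\alpha(0),\,L_\beta(0),\,L_\gamma(0)$ corresponding to the three adjacent regions $C_\alpha, C_\beta, C_\gamma$ meet in the associated pair of pants $P_v\subset H$ with pairwise interior intersections $x_{\alpha\beta},\,x_{\beta\gamma},\,x_{\gamma\alpha}$. From the pair-of-pants computation in~\cite{AAEKO13} one extracts a canonical length-three cyclic chain $\phi_v\in CC_*(\cA,\cA)$ supported on these three intersections whose image under $\mathcal{OC}$ represents the unit in $SH^0(P_v)$. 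Set $\phi:=\sum_v \phi_v$.

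To compute $\mathcal{OC}(\phi)$ globally, I would choose a Hamiltonian and almost complex structure compatible with the pair-of-pants decomposition: linear in the $\tau$-coordinate on each cylindrical neck $e_{\alpha\beta}$, and agreeing with the~\cite{AAEKO13} model on each $P_v$. A maximum-principle argument on the necks then shows that any rigid pseudo-holomorphic disc contributing to $\mathcal{OC}(\phi_v)$ is confined to $P_v$: the inputs $x_{\alpha\beta},\,x_{\beta\gamma},\,x_{\gamma\alpha}$ all lie in $P_v$ and there is no holomorphic escape into an adjacent neck through the straight arcs $L_\alpha(0)$. The output cycle in the symplectic cochain complex therefore coincides with the constant function $1$ on each $P_v$, and, extended trivially along the necks, assembles into the unit $1\in SH^0(H)$.

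The main obstacle is verifying that $\phi$ is genuinely a Hochschild cycle for the global category $\cA$, not merely for the local pair-of-pants subcategory at each $v$. A priori the Hochschild differential $b\phi$ could pick up contributions from $A_\infty$-operations with Reeb chord inputs in an adjacent neck $e_{\alpha\beta}$. Such boundary terms come in pairs attached to the two vertices at the ends of $e_{\alpha\beta}$ and should cancel after summing the $\phi_v$; confirming this, together with the maximum-principle localization above and bookkeeping for the higher wrappings $L_\alpha(k)$ with $k\neq 0$, is the technical content relegated to Appendix~A.
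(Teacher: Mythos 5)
Your overall strategy — invoke Abouzaid's criterion, build a Hochschild cycle from local contributions at each vertex of $\Pi$, and show the open-closed map hits the unit by a geometric covering argument — is the right one and matches the shape of the paper's Appendix A. But there is a genuine gap in how you set up the local Hochschild chain $\phi_v$ on a pair of pants with \emph{bounded} legs, and it is precisely this gap that the paper's auxiliary construction is designed to fill.

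You assume that at each vertex $v$ the three Lagrangians $L_\alpha(0), L_\beta(0), L_\gamma(0)$ have canonical ``pairwise interior intersections'' $x_{\alpha\beta}, x_{\beta\gamma}, x_{\gamma\alpha}$ giving a length-three cyclic chain. That is true along an \emph{unbounded} leg, where the wrapped complex $CW^*(L_\alpha(0),L_\beta(0))$ has a distinguished shortest Reeb chord $x^0_{\alpha;\beta}$ living in the cylindrical end. Along a \emph{bounded} edge $e_{\alpha\beta}$, however, there is no such chord confined to a single pair of pants: the quadratic Hamiltonian on a bounded neck wraps trajectories across to the adjacent pair of pants, and $\phi^1_n(L_\alpha(0))\cap L_\beta(0)$ consists of $d_{\alpha,\beta}+1$ points (or fewer), possibly zero of them, and none is canonically associated to $P_v$ rather than to its neighbor. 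So the chain $\phi_v$ you describe is not well defined once $H$ has more than one pair of pants, and the later cancellation argument for $b\phi$ across necks has nothing to cancel against.

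The paper's proof resolves this by introducing, for each bounded edge $e_{\alpha\beta}$, an auxiliary circle $S_{\alpha\beta}$ at $\{\tau=2\}$, and taking polygons whose two corners along $e_{\alpha\beta}$ lie at intersections $p_\alpha^\beta\in CW(L_\alpha(0),S_{\alpha\beta})$ and $q_\alpha^\beta\in CW(S_{\alpha\beta},L_\beta(0))$; these \emph{do} live in a single pair of pants, so each $P_v$ is covered by one front and one back polygon (a hexagon, pentagon, quadrilateral, or triangle depending on how many legs are bounded), and the front/back pairing is what makes the Hochschild differential cancel term by term. The circles $S_{\alpha\beta}$ are of course not among your generators $L_\alpha(k)$, so a second ingredient is needed: an exact triangle $\bigoplus S_{\alpha\beta}\to L_\alpha(0)\to L_\alpha(1)\to [1]$ coming from the fact that $L_\alpha(1)$ is the Dehn twist of $L_\alpha(0)$ along the $S_{\alpha\beta}$'s. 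This shows the $S_{\alpha\beta}$'s are split-generated by $\{L_\alpha(0),L_\alpha(1)\}$, so proving generation for the larger collection $\{L_\alpha(k)\}\cup\{S_{\alpha\beta}\}$ suffices. Your proposal, as written, only recovers the case where every leg is unbounded (i.e., $H$ is a single pair of pants, which is the [AAEKO13] computation) and is missing both the auxiliary circles and the Dehn twist step; these are the essential new content of the lemma.

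Two smaller points: (i) the open-closed map does not factor through $SH^*(P_v)$, so the claim that $\mathcal{OC}(\phi_v)$ ``represents the unit in $SH^0(P_v)$'' needs to be replaced by the paper's direct geometric argument that the union of all polygons covers each generic point of $H$ exactly once, identifying $\mathcal{OC}(\Phi)$ with the sum of minima of the Hamiltonian, i.e.\ $1\in SH^0(H)$; and (ii) your maximum-principle localization of discs to a single $P_v$ is essentially correct in spirit but needs the corners at $S_{\alpha\beta}$ to serve as ``barriers'' — without them a disc with boundary on $L_\alpha(0)$ can freely slide through a bounded neck.
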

\noindent In fact, the proof in Appendix A shows that $\{L_\alpha(0), \ L_\alpha(1), \ \alpha\in A\}$ already split-generates $\mathcal W(H)$, but we use all $L_\alpha(k)$ to make the algebraic structures more clear.

\begin{figure}[h]
\centering
	\scalebox{0.5}{\includegraphics{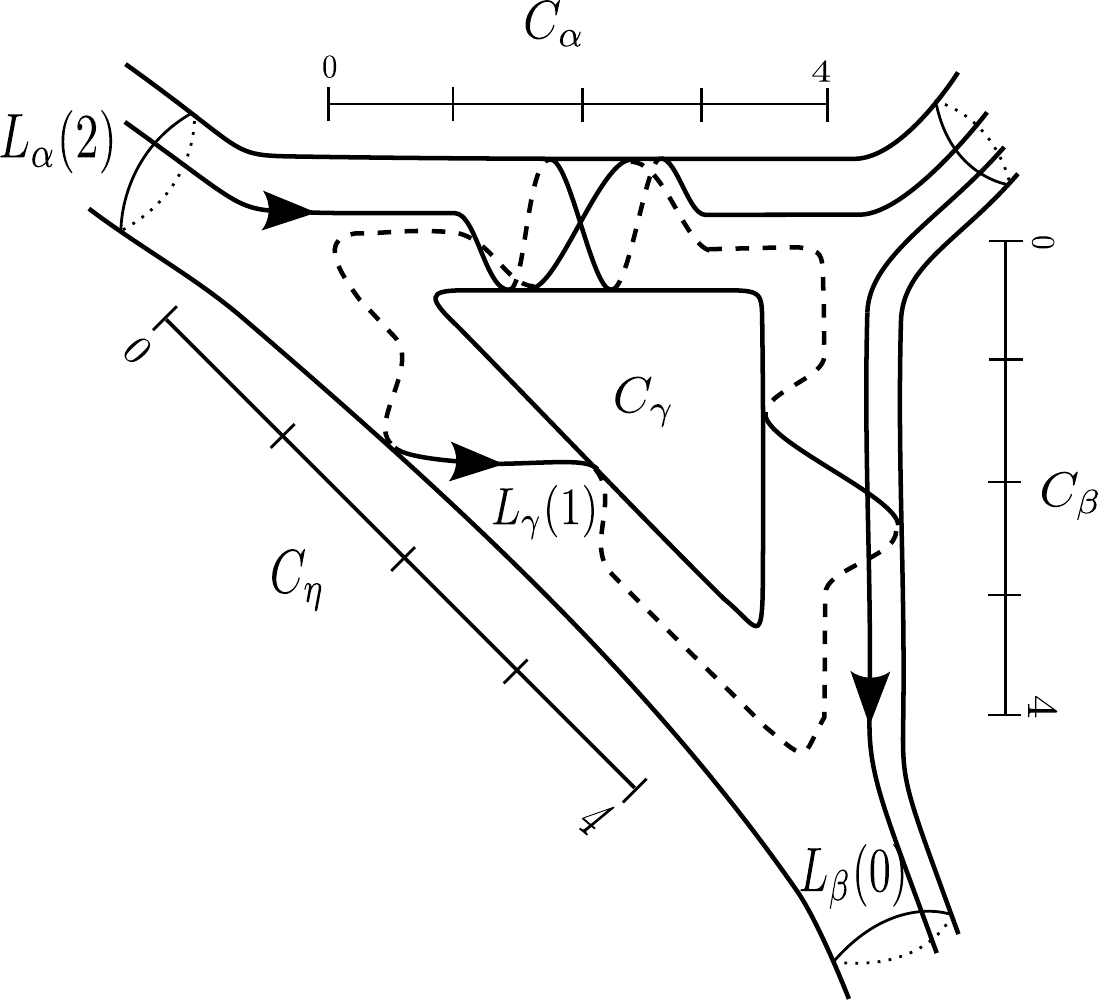}}
	\caption{Lagrangians $L_\alpha(2)$, $L_\beta(0)$,  and $L_\gamma(1)=L_\gamma(-1+2)$.  In this example, assume $n_{\alpha\gamma}=n_{\beta\gamma}=n_{\eta\gamma}=1$ and $d_{\alpha,\gamma}=d_{\beta,\gamma}=d_{\eta, \gamma}=1$, and we pick $\delta_{\alpha,\gamma}=\delta_{\beta,\gamma}=\delta_{\eta, \gamma}=0$ and $\delta_{\gamma,\alpha}=\delta_{\gamma,\beta}=\delta_{\gamma,\eta}=2$. }
	\label{fig: lagrangian}
\end{figure}

\begin{remark} \label{rmk: ambience} We can choose the generating Lagrangians in $H$ as boundaries of Lagrangians in $(\mathbb C^*)^2-H$, assuming that for each $\alpha\in A$,  $c_{\alpha} \in \mathbb R_{>0}$ and there is a point $p_\alpha =(p_{\alpha,1}, p_{\alpha,2})\in \mathbb Z^2\cap C_\alpha$ with the property that $\langle p_\alpha-u, \alpha-\beta\rangle$ is an odd integer for any $u\in  C_\alpha\cap C_\beta$.  Note that $\alpha-\beta$ is an integral normal vector to the edge $ C_\alpha \cap  C_\beta$ pointing into the region $C_\alpha$.  Let $\mathcal L_\alpha$ be the zero section of the cotangent bundle $T^*\mathbb R^2/(2\pi / |\log t|)\mathbb Z^2$ with its base restricted to $C_\alpha$, on which we consider the Hamiltonian function
\[
H_\alpha =-\frac{\pi}{|\log t|} \left((u_1-p_{\alpha,1})^2+(u_2-p_{\alpha,2})^2\right).
\]
 Then for $k\in \mathbb Z$, the time-$(\frac{1}{2}+k)$ Hamiltonian flow is given by (written in the universal cover of $T^*\mathbb R^2/(2\pi/|\log t|) \mathbb Z^2$),
\[
\phi^{k+\frac{1}{2}}_\alpha (u_1,\theta_1, u_2,\theta_2)=\left(u_1, \theta_1-\frac{1}{|\log t|}(\pi+2\pi k)(u_1-p_{\alpha,1}), u_2,\theta_2-\frac{1}{|\log t|}(\pi+2\pi k)(u_2-p_{\alpha,2})\right).
\]
Let $l_\alpha(k):=\partial (\phi_\alpha^{\frac{1}{2}+k}(\mathcal L_\alpha))$.  Lemma \ref{lemma: twist} shows that each $l_{\alpha}(k)$ corresponds to a Lagrangian contained in $H_\Pi$, which in turn corresponds to a Lagrangian in $H$.   We also need to modify $l_\alpha(k)$ by untwisting it so that it is invariant under Liouville flow in each cylindrical end, and further twist it in the bounded edges as needed to account for the $\delta_{\alpha,\beta}$'s.

\begin{lemma} \label{lemma: twist} $l_\alpha(k)\subset H_\Pi$.
\end{lemma}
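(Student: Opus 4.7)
The plan is a direct computation, verifying that the Hamiltonian flow places the boundary curve precisely on the locus cut out by (\ref{eqn: edge def}).

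First I would unpack the geometry. Since $\mathcal L_\alpha$ is the zero section of $T^*\mathbb R^2/(2\pi/|\log t|)\mathbb Z^2$ over $C_\alpha$, its image $\phi^{k+1/2}_\alpha(\mathcal L_\alpha)$ is a section whose fiber over $u=(u_1,u_2)\in C_\alpha$ is the point
\[
(\theta_1,\theta_2)=-\frac{\pi(1+2k)}{|\log t|}(u_1-p_{\alpha,1},\,u_2-p_{\alpha,2}),
\]
directly from the explicit formula given for $\phi_\alpha^{k+1/2}$. Taking the boundary restricts to $u\in\partial C_\alpha$, so $l_\alpha(k)$ is a one-dimensional curve that over each edge $C_\alpha\cap C_\beta$ is a section with the angular coordinate above.

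Next I would check the defining condition of $e^\Pi_{\alpha\beta}$. Under the identification (\ref{eq: cotangent identification}), $\arg(z_j)=|\log t|\,\theta_j$, so
\[
\arg\bigl(z^{\alpha-\beta}\bigr)=|\log t|\,\langle\alpha-\beta,\theta\rangle=-\pi(1+2k)\,\langle\alpha-\beta,u-p_\alpha\rangle=\pi(1+2k)\,\langle p_\alpha-u,\alpha-\beta\rangle.
\]
By the standing hypothesis on $p_\alpha$, the integer $m:=\langle p_\alpha-u,\alpha-\beta\rangle$ is odd for all $u\in C_\alpha\cap C_\beta$ (which is an affine condition once it holds at one lattice point, since $\alpha-\beta$ is the primitive normal so translations along the edge change $\langle p_\alpha-u,\alpha-\beta\rangle$ by integers if we're careful, and by $0$ on the edge since the edge is parallel direction). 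Since $1+2k$ is also odd, $(1+2k)m$ is odd, hence
\[
\arg\bigl(z^{\alpha-\beta}\bigr)\equiv\pi\pmod{2\pi}.
\]
Combined with $c_\alpha/c_\beta\in\mathbb R_{>0}$ (so $\arg(c_\alpha/c_\beta)=0$), this gives $\arg(c_\alpha z^\alpha/(c_\beta z^\beta))\equiv\pi$, which is exactly the condition in (\ref{eqn: edge def}). Since also $\mathrm{Log}_t(z)=u\in\mathrm{int}(C_\alpha\cap C_\beta)$ on the open part of the edge, each such point of $l_\alpha(k)$ lies in $e^\Pi_{\alpha\beta}\subset H_\Pi$. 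Running over all edges adjacent to $C_\alpha$ yields $l_\alpha(k)\subset H_\Pi$.

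The only genuinely delicate point is the parity claim for $m$ along an entire edge: I would verify that $\langle p_\alpha-u,\alpha-\beta\rangle$ is constant modulo $2$ as $u$ varies over $C_\alpha\cap C_\beta$, using that any two lattice points of the edge differ by a multiple of the primitive edge vector, which is orthogonal (in the pairing with $\alpha-\beta$) up to the standard $GL(2,\mathbb Z)$ normalization of the cells of $\mathcal P$. Once the odd-integer condition propagates along the edge, the remainder of the argument is the formal substitution above, and no pseudo-holomorphic or Floer-theoretic input is required.
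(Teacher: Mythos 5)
Your proof is correct and takes essentially the same approach as the paper's: a direct computation showing $\arg(z^{\alpha-\beta})\equiv -\pi(1+2k)\langle u-p_\alpha,\alpha-\beta\rangle\equiv\pi\pmod{2\pi}$ using the oddness hypothesis on $\langle p_\alpha - u,\alpha-\beta\rangle$ and on $1+2k$. The extra discussion of why the parity persists along the edge is fine but unnecessary: $\alpha-\beta$ is normal to $C_\alpha\cap C_\beta$, so $\langle p_\alpha-u,\alpha-\beta\rangle$ is literally constant (not just constant mod $2$) as $u$ ranges over the edge, and the paper simply builds this into its standing hypothesis in Remark~\ref{rmk: ambience}.
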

\begin{proof}  For any $(u, \theta)\in l_\alpha(k)$, let $z=(t^{u_1+i\theta_1}, t^{u_2+i\theta_2})$ be the corresponding point in $(\bC^*)^2$ as in Equation \eqref{eq: cotangent identification}.  Then for each adjacent component $C_\beta$,
\begin{equation} \begin{array}{ll}
\arg\left(\frac{c_\alpha}{c_\beta}z^{\alpha-\beta}\right)=\arg z^{\alpha-\beta} & \equiv \sum_{j=1}^2 |\log t| \theta_j(\alpha_j-\beta_j)\\
 & = \sum_{j=1}^2 -(\pi+2\pi k)(u_j-p_{\alpha,j})(\alpha_j-\beta_j)\\
 & \equiv  -\pi(1+2k)\langle u-p_\alpha, \alpha-\beta \rangle \\
 & \equiv \pi 
\end{array}\end{equation}
where $k\in \mathbb Z$, and the last equivalence comes from $(1+2k) \langle u-p_\alpha, \alpha-\beta\rangle$ being an odd integer.  As defined in Equation (\ref{eqn: edge def}), $z\in  e_{\alpha\beta}^\Pi$. 
\end{proof}

\end{remark}

\subsection{Hamiltonian perturbation} \label{sec: hamiltonian} We now introduce Hamiltonian perturbations that will eventually enable us to define a model for $\mathcal W(H)$ that is suitable for computing $\mathcal W(H)$ from pair-of-pants decompostions. 

For every integer $n\geq 0$, let $H_{n}: H\to \mathbb R$ be a Hamiltonian such that for any unbounded edge $e_{\alpha\beta}$, $H_n$ is linear on its cylindrical end (as in \cite{AS10}), i.e. $H_n(\tau)=n\tau+d_n$, $\tau\in [1,\infty)$ there.  On each bounded edge $e_{\alpha\beta}$, $H_{n}$ agrees with the following function $H_{\alpha\beta,n}$ on $(0,4)\times S^1$ with coordinates $(\tau, \psi)$,   
\begin{equation}\label{eqn: hamiltonian}
H_{\alpha\beta, n}(\tau, \psi) = \left\{ \begin{array}{ll}
                     c_{\alpha\beta}(-n\pi\tau^2+2n\pi)+d_{\alpha\beta,n},              & 0 < \tau\leq 1\\
										c_{\alpha\beta}n\pi(\tau-2)^2+d_{\alpha\beta,n}, &1\leq \tau\leq 3   \\
	c_{\alpha\beta}(-n\pi(\tau-4)^2+2n\pi)+d_{\alpha\beta,n}, & 3\leq \tau < 4 
	                                     \end{array}\right. .
\end{equation} 
  The constants $d_n$ and $d_{\alpha\beta,n}$ above (regardless of whether the edge is bounded or unbounded)are  picked so that $H_n$ is continuous and that it is constant near the vertices in the complement of all the edges.  These can be satisfied by letting 
\[
c_n=\max \left\{2c_{\alpha\beta}n\pi \mid e_{\alpha\beta} \text{ is an bounded edge}\right\},
\]
and then setting $d_{\alpha\beta,n}=c_n-2c_{\alpha\beta}n\pi$ for each bounded edge $e_{\alpha\beta}$ and $d_{n}=c_n-n$ for each unbounded edge $e_{\alpha\beta}$.  This way, $H_n=c_n$ in the complement of all the edges.   We can check that $H_n\geq H_{n-1}$ for all $n$.  Even though $H_n$ is defined for $n\in \mathbb Z_{\geq 0}$, $H_w$ can be defined in the same way for any real value $w \geq 0$, and we will make use of it in Section \ref{sec: continuation}.
  Let $d_{\alpha\beta,0}=1$.  After we have picked $d_{\alpha\beta, n-1}$ for each edge $e_{\alpha\beta}$, we pick $\tilde d_{\alpha\beta,n}$ so that $H_n>H_{n-1}$.  Let

The corresponding time-1 flow generated by the Hamiltonian vector field $X_{H_{\alpha\beta,n}}$ is determined by the definition
\[
\omega_{\alpha\beta}(X_{H_{\alpha\beta,n}}, \cdot)=c_{\alpha\beta}d\tau\wedge d\psi (X_{ H_{\alpha\beta,n}}, \cdot)=dH_{\alpha\beta,n}.
\]  
The time-1 flow of $H_{\alpha\beta,n}$ written in the universal cover of $E_{\alpha\beta}\cong (0,4)\times \mathbb R$ is
\begin{equation}\label{eqn: flow}
\phi^1_{\alpha\beta,n}(\tau, \psi) = 
\left\{ 
\begin{array}{ll}
(\tau,\psi -2n\pi\tau),	& 0< \tau <1\\
 (\tau, \psi+2n\pi(\tau-2)), & 1\leq \tau \leq 3   \\
 (\tau, \psi-2n\pi(\tau-4)), & 3 < \tau < 4
 \end{array}.
 \right. 
\end{equation}
Note that by our construction $\phi^1_{\alpha\beta,n}= \phi^n_{\alpha\beta,1}$.

 Denote by $\phi^t_{n}$ the time $t$ flow of $H_n$.  We can see from Equation \eqref{eqn: flow} that the  Lagrangians  $\phi^1_n(L_{\alpha}(k))$ and $\phi^1_n(L_\beta(k))$ wrap around the bounded cylinder $e_{\alpha\beta}$ in the ``negative'' direction on intervals $(0,1)\cup (3,4)$ for $n$ times each and in the ``positive'' direction on (1,3) for $2n-k_{\alpha\beta}$ times ($+2n$ wraps due to the action of $H_n$ and $-k_{\alpha\beta}$ wraps from the definition of $L_\alpha(k)$, $L_\beta(k)$), as shown in Figure \ref{fig: hamiltonian}.  Note that  the degree of the generators of $CF(L_\alpha(k), L_\alpha(k); H_n)$ is even in the region where $H_n$ wraps positively and odd when it wraps negatively.  

\begin{figure}[h]
\centering
	\scalebox{0.75}{\includegraphics{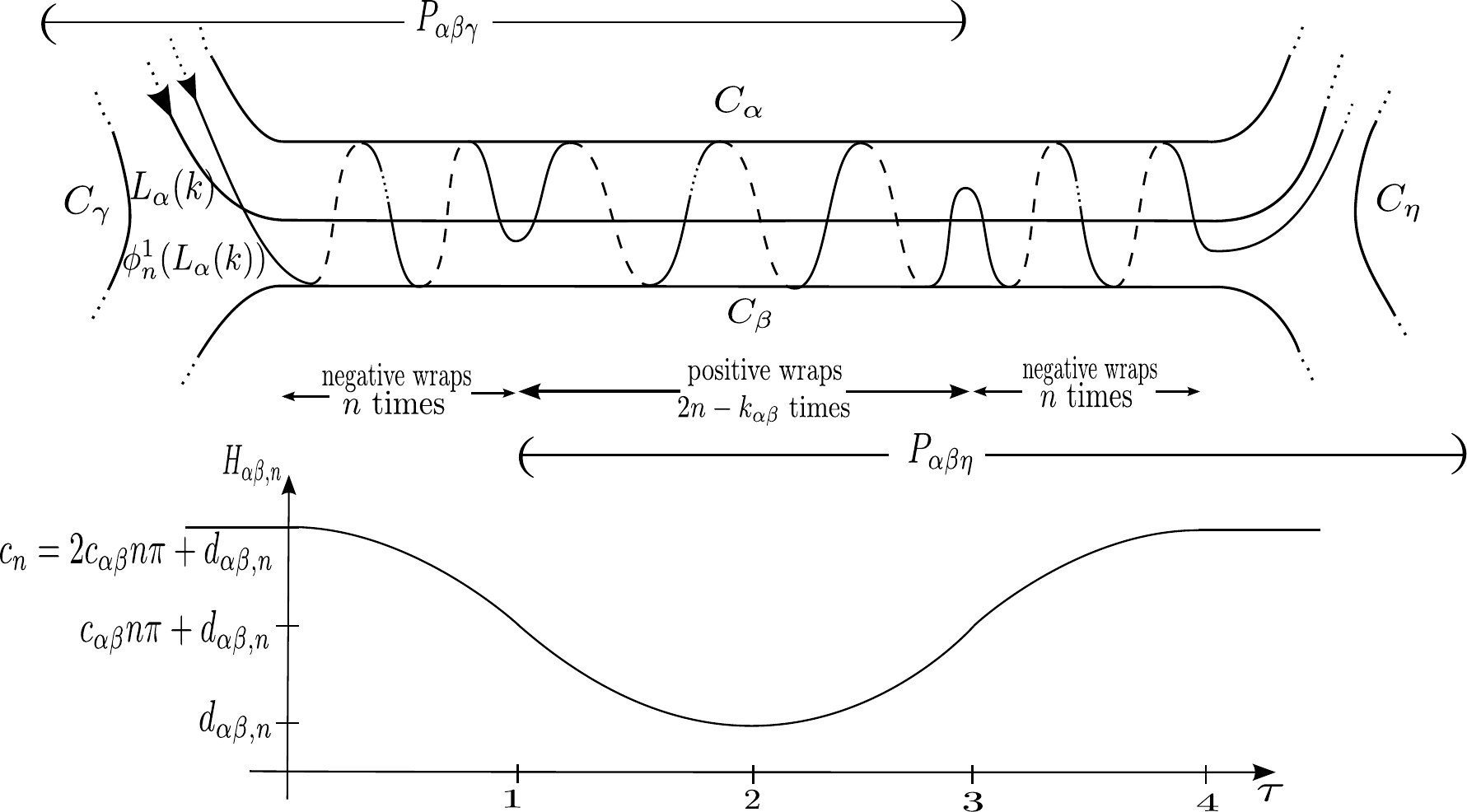}}
	\caption{The Hamiltonian $H_n$ on the cylindrical edge $e_{\alpha\beta}$, and $\phi^1_n(L_{\alpha}(k))$ wraps in the ``negative'' direction on intervals $(0,1)\cup (3,4)$ for $n$ times each and in the ``positive'' direction on (1,3) for $2n-k_{\alpha\beta}$ times.  In this particular drawing, $k_{\alpha\beta}=0$ so $L_{\alpha}(k)$ it self runs through $e_{\alpha\beta}$ straight. }
	\label{fig: hamiltonian}
\end{figure}

We also need to make a small modification of $H_n$ for all $n\in \mathbb Z$ so that $\phi^1_n(L_\alpha(k))$ intersects $L_{\alpha}(k)$ transversely and to make it smooth at $\tau=0,1,3,4$ on bounded edges and at $\tau=1$ on unbounded edges. Furthermore, the perturbations are chosen consistently inside the pair of pants regions (i.e. complements of the edges) so that $\phi^1_n(L_{\alpha}(k))$ intersects $L_\alpha(k)$ transversely, just once inside the pair of pants, and always at the same point which has degree $0$.

\subsection{Backgrounds on Floer complex and product operations} \label{subsec: background}

For any pairs of objects $L_i(k_1)$, $L_j(k_2)$, we get a Floer complex $CF^*(L_i(k_1),L_j(k_2); H_n)$, where $H_n$ is the Hamiltonian defined in Section \ref{sec: hamiltonian}.  This Floer complex is generated by the set $\mathcal X(L_i(k_1), L_j(k_2); H_n)$ consisting of Reeb chords that are time-1 trajectories of the Hamiltonian $H_{n}$ starting in $L_i(k_1)$ and ending in $L_j(k_2)$.  Equivalently, these chords correspond to intersection points of $\phi_{n}^1(L_i(k_1))\cap L_j(k_2)$. 

The differential $d: CF^*(L_i(k_1),L_j(k_2); H_n)\to CF^*(L_i(k_1),L_j(k_2); H_n)$ is given by the count of pseudo-holomorphic strips $u: \mathbb R\times [0,1]\to H$ which are solutions to the inhomogeneous Floer's equation
\begin{equation}\label{eqn: floer}
(du-X_{H_n}\otimes dt)^{0,1}_{J_t}=0, \ \text{equivalently }\ \frac{\partial u}{\partial s}+J_t\left(\frac{\partial u}{\partial t}-X_{H_n}\right)=0
\end{equation}
with boundaries $u(s,0)\in L_i$ and $u(s,1)\in L_j$.  As $s\to \pm\infty$, $u$ converges to Hamiltonian flow lines that are generators the Floer complex involved in the differential. Such a solution $u(s,t)$ can be equivalently seen as an ordinary $\tilde J=(\phi_n^{1-t})_*J$-holomorphic strip $\tilde u(s,t)=\phi_n^{1-t}(u(s,t))$ with boundaries on $\phi^1_n(L_i(k_1))$ and $L_j(k_2)$.   Indeed, 
\[\frac{\partial \tilde u}{\partial s}=(\phi_n^{1-t})_*\left(\frac{\partial u}{\partial s}\right) \ \text{and} \ \frac{\partial \tilde u}{\partial t}=(\phi_n^{1-t})_*\left(\frac{\partial u}{\partial t}-X_{H_n}\right), \]
so the Floer equation (\ref{eqn: floer}) becomes 
$\frac{\partial \tilde u}{\partial s}+\tilde J \frac{\partial \tilde u}{\partial t}=0.$

For Lagrangians $L_{i_0}(k_0),L_{i_1}(k_1), L_{i_2}(k_2)$, $n\gg |k_0|,|k_1|,|k_2|$, the product 
\begin{equation*} 
\begin{array}{l}
\mu^2(H_n): CF^*(L_{i_1}(k_1),L_{i_2}(k_2); H_n)\otimes CF^*(L_{i_0}(k_0), L_{i_1}(k_1); H_n) \hspace{1in} \\
\hspace{3.5in} \to CF^*(L_{i_0}(k_0), L_{i_2}(k_2); 2H_n)
\end{array}
\end{equation*}
is given by the count of solutions $u:D\to H$ of the perturbed Floer equation 
\begin{equation}
(du-X_{H_n}\otimes\beta)^{0,1}_{J_t}=0
\end{equation}
where $D$ is a disc with three strip-like ends, and the images of the three components of $\partial D$ are contained in the respective Lagrangians $L_{i_0}(k_0), L_{i_1}(k_1)$, and $L_{i_2}(k_2)$.  The 1-form $\beta$ on $D$ satisfies $d\beta=0$ and it pulls back to $dt$ on the input strip-like ends and to $2dt$ on the output strip-like end.  Again, by changing to a domain dependent almost-complex structure, this is equivalent to counting standard holomorphic discs with boundaries on $\phi_{n}^2(L_{i_0}(k_0)), \phi_n^1(L_{i_1}(k_1))$, and $L_{i_2}(k_2)$ instead.  

The higher products 
\begin{equation*} \begin{array}{l}
\mu^d(H_n): CF^*(L_{i_{d-1}}(k_{d-1}),L_{i_d}(k_d); H_n)\otimes \cdots \otimes CF^*(L_{i_0}(k_0), L_{i_1}(k_1); H_n) \hspace{0.7in} \\
\hspace{3.8in} \to CF^*(L_{i_0}(k_0), L_{i_d}(k_d); dH_n)
\end{array}
\end{equation*}
are constructed in a similar way.

\subsection {Orientation and grading}
Let's orient each $L_\alpha(k)$ counterclockwise along $\partial C_\alpha$.  This will give each Floer complex a $\mathbb Z_2$-grading.  For each transverse intersection point $x$ of $L_0$ and $L_1$, we can identify $T_xL_0\cong \mathbb R$ and $T_xL_1\cong i\mathbb R$ via a linear symplectic transformation.  Consider the path $l_t$ of Lagrangian lines with $l_0=\mathbb R$ and $l_1=i\mathbb R$, and $l_t=e^{-i\pi t/2}\mathbb R$.  If the path $l_t$ maps the orientation of $T_xL_0$ to the orientation of $T_xL_1$, then $\deg(x)=0$; otherwise, $\deg(x)=1$.  

It is also possible to define a $\mathbb Z$-grading on Floer complexes of the Lagrangians in consideration.  To do that, we need to pick a trivialization of $T^*H^{1,0}$.  There's no canonical way to choose this trivialization.  We can just choose a global nonvanishing section $\Omega$ of $T^*H^{1,0}$ to be any meromorphic form allowed to have zeros or poles at each puncture.   Once we make that choice, for any Lagrangian plane $l\subset T_xH$, $\Omega|_l=\alpha \text{vol}_l$ where $\alpha\in \mathbb C^*$ and $\text{vol}_l$ is a real volume form.  And we can define $\arg(l):=\arg (\Omega|_l)=\arg(\alpha)\in \mathbb R/\pi \mathbb Z$ (or $\mathbb R/2\pi \mathbb Z$ if Lagrangians are oriented already).  The Lagrangian Grassmannian $LGr(TH)$ can then be lifted to a fiberwise universal cover $\widetilde{LGr}(T_pH)=\{(l,\theta)|l\in LGr(T_pH),\theta\in \mathbb R, \theta\equiv \arg(\Omega|_l) \text{ mod } \pi\}$.
 
The tangent lines along a Lagrangian $L$ form a path of Lagrangian planes, which are mapped by the above phase function to $S^1$.   Because of the Lagrangians we consider are simply connected,  the homotopy class of this map is trivial, i.e. the Lagrangians have vanishing Maslov class. Hence for each Lagrangian $L$, we can equip it with a grading, which is a consistent choise of a graded lift to $\widetilde {LGr}(TH)$ of the section $p\mapsto T_pL$ of $LGr(TH)$ over $L$.  Then we can assign a degree to a transverse intersection $p\in L_0\cap L_1$.  In the case of Riemann surface $H$, $\deg (p) = \Big\lceil \frac{\theta_1-\theta_0}{\pi}\Big\rceil$.  (Cf. \cite{Se08, Au13}.)  Once this $\mathbb Z$-grading is available, the index of a rigid holomorphic polygon in $H$ corresponding to a higher product $\mu^k$ is $\deg(\text{output})-\sum\deg(\text{inputs})=2-k$.

We will make use of a $\mathbb Z$-grading in Section \ref{sec: higher continuation} (especially Section \ref{sec: exceptional disc}) to  rule out the possible existence of some exceptional holomorphic discs.   Other than that, everything in this paper is $\mathbb Z_2$-graded.

\subsection {Linear continuation map} \label{sec: continuation} 
For any two Lagrangians $L_1=L_i(k_1),L_2=L_j(k_2)$, we would like to define the wrapped Floer complex $CW^*(L_1, L_2)$ as a direct limit of the perturbed Floer complexes $CF^*(L_1, L_2; H_n)$ as $n\to\infty$.  This definition relies on the existence of continuation maps 
\[
\kappa: CF^*(L_1, L_2; H_n) \to CF^*(L_1, L_2; H_N),
\]
whenever $n\leq N$ (recall, by construction, $H_n\leq H_N$ whenever $n\leq N$).  In general, the direct limit construction would not be compatible with $A_\infty$-structures, hence, $CW^*(L_1,L_2)$ is defined as the homotopy direct limit in \cite{AS10}.  However, we will show in this section that it turns out in our case, each continuation map $\kappa$ above is just an inclusion for $n$ sufficiently large depending on $L_1, L_2$.  Furthermore, we will show in the next section that higher order continuation maps (i.e. those with $d\geq 2$ inputs) are trivial when mapping from sufficiently perturbed Floer complexes.  Consequently, the wrapped Floer complex can be defined as
\begin{equation}\label{eqn: wrapped complex}
CW^*(L_i(k_1), L_j(k_2))=\bigcup_{n=n_0}^\infty CF^*(L_i(k_1), L_j(k_2);H_n)/\sim
\end{equation}
where the equivalence relation is given by continuation maps which are inclusions.  Moreover, we have well defined differential and $A_\infty$-products 
\[
\mu^d: CW^*(L_{d-1},L_d)\otimes \cdots \otimes CW^*(L_0, L_1)\to CW^*(L_0, L_d)
\]
which are given by ordinary Floer differential and products.

In the usual definition of the continuation map, for an input $p\in \mathcal X(L_1, L_2; nH_1)=\mathcal X(L_1, L_2; H_n)$, the coefficient of $q\in \mathcal X(L_1, L_2; NH_1)=\mathcal X(L_1, L_2; H_N)$ in $\kappa(p)$ is given by the count of index zero solutions to the perturbed Floer equation (the continuation equation)
\begin{equation} \label{eqn: continuation}
\frac{\partial u}{\partial s}+ J\left(\frac{\partial u}{\partial t}-\lambda(s) X_{H_1}\right)=0,
\end{equation}
where $\lambda(s)$ is a smooth function which equals $N$ for $s\ll 0$ and $n$ for $s\gg 0$, and such that $\lambda'(s)\leq 0$.  Such a solution also needs to satisfy the boundary conditions $u(s,0)\in L_1$ and $u(s,1)\in L_2$, and it converges to the generator $p$ as $s\to \infty$ and the generator $q$ as $s\to -\infty$.  Instead of considering moduli spaces of perturbed Floer solutions, we will introduce the related moduli spaces of cascades of pseudoholomorphic discs by taking the limit where the derivative of $\lambda$ tends to zero.  Counting cascades of pseudoholomorphic discs with appropriate indices gives equivalent definitions of continuation maps.  We will only discuss the construction of cascades briefly following Appendix A of \cite{Au10}, \cite{AS10}, and Section 10(e) of \cite{Se08}, and we refer to them for details.

 First of all, we would like to go to the universal cover to make visualizations and discussions easier.  For any given input data $(p; L_1, L_2)$, we can obtain a lift $(p'; L_1', L_2')$ in the universal cover $E_H$ of the Riemann surface by picking an arbitrary lift $L_1'$ of $L_1$ and then tracing the lift of $L_2$ through $p'$ to determine $L_2'$.  Rather than counting pseudoholomorphic strips in $H$ with the above conditions, it is equivalent to count pseudoholomorphic strips in the universal cover $E_H$ with boundaries in the lifts $L_1'$ and $L_2'$.  The Lagrangians $L_1'=L_i(k_1)'$ and $L_2'=L_j(k_2)'$ satisfy some nice properties which are necessary for defining cascades:   

\begin{itemize}
	\item [(P1) ]  For all integer values of $w \geq n_0$ large enough, $\phi^1_{w}(L_1)$ is transverse to $L_2$, so $\phi_{w}(L_1')$ is transverse to $L_2'$. 
	\item [(P2) ]  For $w\geq n_0$ large enough, each point $x'\in \phi^1_w(L_1')\cap L_2'$ lies on a unique maximal smooth arc $\gamma: [n_0, \infty) \to E_H$ given by $t\mapsto \gamma(t)$, where $\gamma(t)$ is a transverse intersection point of $\phi_t^1(L_1')$ and $L_2'$ for all $t$.    In other words, as $w$ increases from $n_0$ to $\infty$, no new intersection between $\phi_w(L_1')$ and $L_2'$ are created, and the existing intersections remain transverse.  
\end{itemize}
Both (P1) and (P2) can be achieved by choosing $n_0>\max\{1, |k_1|+|k_2|\}$.  For Lagrangians $L_1, L_2 \subset H$, property (P2) for their corresponding lifts in $E_H$ implies that as $w$ increases from $n_0$ to $\infty$, any newly created intersection of $\phi^1_w(L_1)$ and $L_2$ will not be the output of any $J$-holomorphic disc.  For any $x\in \phi^1_w(L_1)\cap L_2$ and its lift $x'\in \phi^1_w(L_1')\cap L_2'$ which lies on the arc $\gamma$, we can identify $x$ with a unique point $\vartheta_{w}^{w'}(x)$, $w'\geq w$, determined by requiring the lift of $\vartheta_{w}^{w'}(x)$ to be $\gamma(w')$.

 For Lagrangians $L_1=L_i(k_1)$, $L_2=L_j(k_2)$, and $n\geq n_0$, we define the continuation map $\kappa: CF^*(L_1, L_2; H_n) \to CF^*(L_1, L_2; H_N)$, $N\geq n$, as follows.  Given $p\in \mathcal X(L_1, L_2; H_n)$ and  $q\in \phi^1_N(L_1)\cap L_2$, the coefficient of $q$ in $\kappa(p)$ is given by the count of linear cascades from $p$ to $q$ of Maslov index zero.  A \emph{$k$-step linear cascade} from $p$ to $q$ is a sequence of $k$ ordinary pseudo-holomorphic strips $u_1,\ldots u_k: \mathbb R\times [0,1]\to H$ satisfying:
\begin{itemize}
	\item $u_i(\mathbb R\times 0) \subset \phi^1_{w_i}(L_1)$, $u_i(\mathbb R\times 1) \subset L_2$ for some $w_1\leq \cdots \leq w_k$ in the interval $[n,N]$;
	\item $u_i$ has finite energy, and we denote by $p_i^{\pm}\in\phi^1_{w_iH}(L_1)\cap L_2$ the intersection points to which $u_i$ converge at $\pm\infty$;
	\item $p^{+}_{i+1}=\vartheta_{w_i}^{w_{i+1}}(p_i^-)$, $p^+_1=\vartheta_{n}^{w_1}(p)$, and $q=\vartheta_{w_k}^N(p^-_k)$.
\end{itemize}
When $q=\vartheta_n^N (p)$, we allow the special case of $k=0$.

\begin{figure}[h]
\centering
	\scalebox{0.7}{\includegraphics{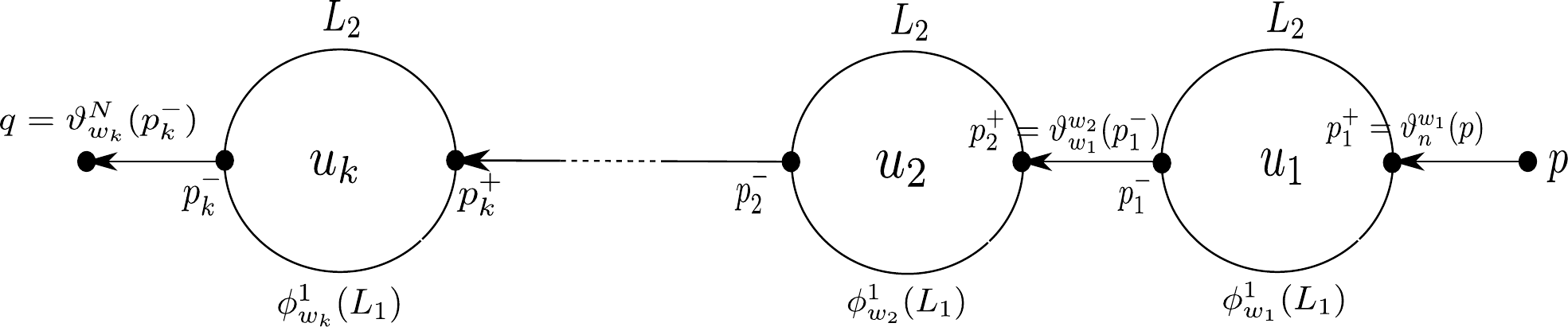}}
	\caption{a $k$-step linear cascade from $p$ to $q$.}
	\label{fig: linear cascade}
\end{figure}

\begin{lemma} \label{lemma: linear continuation}
Given any two Lagrangians, $L_1=L_i(k_1)$ and $L_2=L_j(k_2)$, for a large enough $n_0$ and  $n_0\leq n \leq N$, the linear continuation map $\kappa: CF^*(L_1, L_2; H_n) \to CF^*(L_1, L_2; H_N)$ is just the inclusion.
\end{lemma}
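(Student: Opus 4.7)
My plan is to split the cascade count defining $\kappa$ according to the length $k \geq 0$ and show that only $k = 0$ cascades produce nonzero contributions. For $k = 0$, the definition forces $q = \vartheta_n^N(p)$; property (P2) guarantees this identification is well-defined and yields a unique $0$-step cascade for each generator $p$, counted with coefficient $+1$. Summing over all generators, these $0$-step cascades realize precisely the inclusion $CF^*(L_1,L_2;H_n)\hookrightarrow CF^*(L_1,L_2;H_N)$ on generators under the bijection given by $\vartheta_n^N$.

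Next I will rule out $k \geq 1$-step contributions by dimension counting. Fix a combinatorial type (a choice of arc for each intermediate intersection point). A $k$-step cascade of that type is parametrized by $n \leq w_1 \leq \cdots \leq w_k \leq N$ together with non-constant pseudo-holomorphic strips $u_i$ with boundary on $\phi^1_{w_i}(L_1)$ and $L_2$ whose endpoints are matched via the identifications $\vartheta_{w_i}^{w_{i+1}}$. The $w_i$ contribute $k$ dimensions; each strip moduli space modulo its $\mathbb R$-translation contributes $\mu(u_i) - 1$ dimensions, giving a total virtual dimension $k + \sum_i(\mu(u_i)-1) = \sum_i \mu(u_i)$. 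Rigidity for the Maslov-index-zero continuation map then forces $\sum_i \mu(u_i) = 0$.

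To conclude, I will invoke the $2$-dimensionality of $H$: for generic $J$, any non-constant pseudo-holomorphic strip with transverse boundary on two Lagrangian curves in a Riemann surface is an immersed bigon whose Maslov--Viterbo index is positive, equal to $1$ in the embedded case and higher for multiple covers. Hence each $\mu(u_i) \geq 1$ and $\sum_i \mu(u_i) \geq k \geq 1 > 0$, contradicting the rigidity constraint. Therefore no $k \geq 1$ cascade contributes to $\kappa$, and $\kappa$ is the inclusion.

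The main obstacle I anticipate is to ensure transversality of the cascade moduli spaces, so that the above virtual dimension count is realized, together with the positivity of the Maslov index for non-constant strips. Both points rely on the $2$-dimensional setting, where pseudo-holomorphic strips are genuine holomorphic maps and the almost-complex structure can be perturbed to achieve regularity; one must verify that such perturbations can be chosen coherently across all steps of a cascade and consistently with the choices already made in the construction of the Floer complexes $CF^*(L_1,L_2;H_n)$. A secondary bookkeeping point is to align the grading conventions used for $\mu$ so that the ``Maslov index zero'' condition for the whole cascade translates cleanly into the dimension identity $\sum_i \mu(u_i) = 0$ employed above.
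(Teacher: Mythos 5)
Your proof takes essentially the same route as the paper's: both identify the $0$-step cascade with the inclusion via $\vartheta_n^N$ and rule out contributions from $k\ge 1$-step cascades by observing that any non-constant holomorphic strip on a Riemann surface has Maslov index at least $1$, so the total index of the strips in an index-zero cascade cannot be positive. The paper phrases this more tersely — the strips' indices sum to zero, there are no negative-index strips, and the only index-zero strip is constant — but the underlying argument matches your dimension count, and your concern about coherent perturbations is exactly why the paper works with an arbitrary complex structure rather than needing a generic one.
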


\begin{proof}
The components of an index zero linear cascade are holomoprhic strips whose Maslov indices sum to zero.    The only holomorphic strip of index zero on a Riemann surface (with an arbitrary complex structure) is the constant disc, and there are no holomoprhic strips of negative index.  Hence the continuation map has to be the inclusion induced by identifying intersection points using $\vartheta_n^N$.
\end{proof}

\noindent  As expected, $\kappa$ also has to be an inclusion in the usual definition of the continuation map which is defined by counting inhomogeneous holomorphic strips that are solutions to (\ref{eqn: continuation}).  Otherwise, there are nontrivial index zero inhomogeneous strips from $p$ to a different $q$ ($q\neq \vartheta_n^N(p)$).  Taking the limit where $d \lambda(s)/ds \to 0$, the index zero inhomogeneous holomorphic strips from $p$ to $q$ converge in the sense of Gromov to a nontrivial index zero linear cascade, thus contradicting Lemma \ref{lemma: linear continuation}.  

\subsection{Higher continuation maps}  \label{sec: higher continuation} A given set of boundary data consists of Lagrangians $L_0,\ldots,L_d$ with each $L_i=L_{\alpha_i}(k_i), \alpha_i\in A, k_i\in \mathbb Z$, inputs $p_1\in \mathcal X(L_0, L_1; H_n),\ldots,$ $p_d\in \mathcal X(L_{d-1},L_d; H_n)$, and the output $q\in \mathcal X (L_1, L_2; H_N)$ with $N\geq dn$.   The coefficient of $q$ in the continuation map 
\[ CF^*(L_{d-1}, L_d; nH)\otimes\cdots\otimes CF^*(L_0, L_1; nH)\to CF^*(L_0, L_d; NH)\]
can be defined by counting exceptional solutions, $u:D\to H$ where $D$ is a disc with $d+1$ strip-like ends, to the perturbed Floer equation $(du-X_{\tilde H} \otimes \beta)^{0,1}=0$ with boundaries on Lagrangians $L_0,\ldots,L_d$. Here, $\beta$ is a closed $1$-form, and $\tilde H(s)$ is a domain dependent Hamiltonian. If we represent $D$ as a strip-like disc illustrated in Fig. \ref{fig: strip}, then $\tilde H=\lambda(s)H$ interpolates between $\tilde H(s) =nH$ as $s\to \infty$ and $\tilde H(s)=\frac{N}{d}H$ as $s\to -\infty$. 

\begin{figure}[h]
\centering
	\scalebox{0.9}{\includegraphics{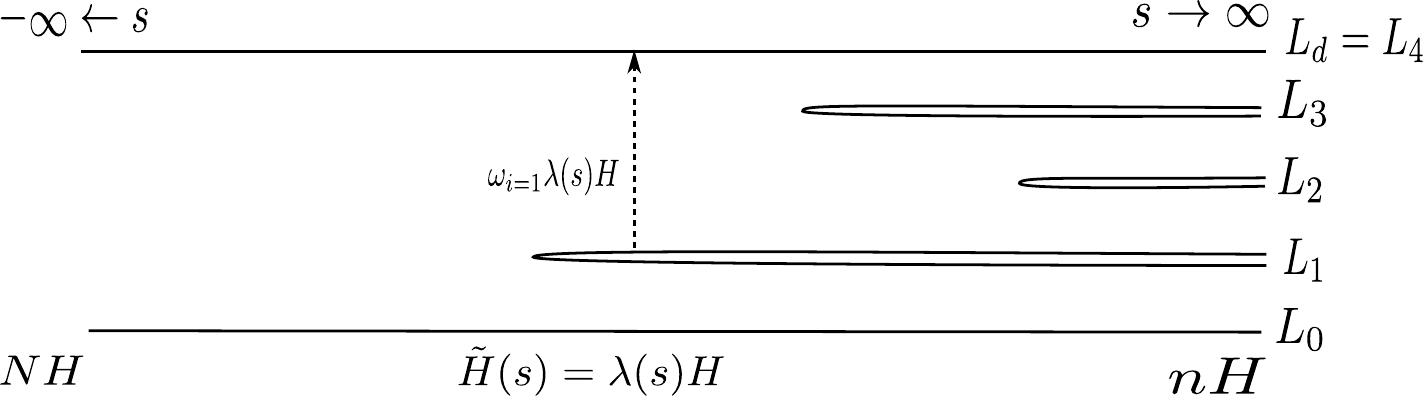}}
	\caption{Higher continuation maps with $d$ inputs can be defined by counting perturbed Floer equations $u:D\to H$ with boundaries on $L_0,\ldots, L_{d}$ ($d=4$ in this picture).  The disc $D$ with $d+1$ strip-like ends is equivalent to the strip $\{(s,t)\in (\mathbb R\times [0,d])\backslash ((d-1) \text{ slits})\}$.}
	\label{fig: strip}
\end{figure}

As with linear continuations, such a definition is hard to use for explicit computations, hence we use the equivalent definition of counting cascades.  Intuitively, perturbed Floer solutions are homotopic to cascades as we take the limit by stretching the strip so that the interpolating Hamiltonian $\tilde H(s)$ changes infinitely slowly, i.e. $\lambda'(s)\to 0$.  Gromov compactness tells us that there are finitely many places where the energy concentrates and we get unperturbed pseudoholomorphic discs as pieces of a cascade.  A cascade is a collection of unperturbed pseudoholomorphic discs with boundary on any $(r+1)$-tuple 
\[
\phi^1_{w_{i_0}\lambda(s) H}(L_{i_0}),\ldots, \phi^1_{w_{i_r}\lambda(s) H}(L_{i_r}) 
\] 
of perturbed Lagrangians with $i_0<\cdots <i_r\in \{0,\ldots, d\}$, $n\leq \lambda \leq N/d$, $w_j=d-j$ for each $j=0,\ldots, d$, and such that at least one of the pseudoholomorphic discs in this collection must be exceptional, i.e. of index less than (2-\#inputs).  Again, refer \cite{Au10},\cite{AS10}, \cite{Se08} for details.  In this section, we will show that there are no such exceptional discs, hence higher continuation maps are all trivial.

\subsubsection{Stability of intersection points and crossing changes.}  

Given a set of boundary data as above, let $L_0', \ldots, L_d'$  be lifts of $L_0,\ldots, L_d$ in the universal cover $E_H$ obtained from choosing an arbitrary lift $L_1'$ of $L_1$ and then determining $L_2',\ldots, L_d'$ by tracing through lifts $p_1',\ldots, p_d'$.  By taking $n_0>\max\{1, |k_1|+\cdots+|k_d|\}$, any pair $L_i', L_j'$ of Lagrangians in this collection of lifts satisfies properties (P1) and (P2) mentioned in Section \ref{sec: continuation}.

Besides pairwise intersections, in order to obtain a vanishing result for cascades, we also want to make sure that intersection points of three or more Lagrangians also stabilize.  That is, for any $(r+1)$-tuple $l_0=\phi^1_{w_{i_0}\lambda}(L'_{i_0}),\ldots, l_r=\phi^1_{w_{i_r}\lambda}(L'_{i_r}) $ with with $i_0<\cdots <i_r\in \{0,\ldots, d\}$, $n\leq \lambda \leq N/d$, $w_j=d-j$ for each $j=0,\ldots, d$ as above, we will show in this section that no intersection points in $l_0\cap \cdots \cap l_r$ are created or canceled as we vary $\lambda(s)$ so long as $n$ is large enough.   This is equivalent to saying that there are no crossing changes (i.e. Reidemeister moves) between multiple Lagrangians in $\{\phi^1_{w_{0}\lambda}(L'_{0}),\ldots,\phi^1_{w_{d}\lambda}(L'_{d})\}$ as $\lambda (s)$ varies so long as $n$ is large enough.

First, suppose all of $l_0,\ldots,l_r$ run through the universal cover $E_{\alpha\beta}\cong (0,4)\times \mathbb R$ of the edge $e_{\alpha\beta}$, we can write down the coordinates for each $l_j=(\tau,\psi_j(\tau))$ using Equation (\ref{eqn: flow}), 
\begin{equation}\label{eqn: lines}
\psi_j(\tau)=\left\{ 
\begin{array}{ll}
c_j(\tau)-2\pi(d-i_j)\lambda(s)\tau,  & 0< \tau <1\\
c_j(\tau)+2\pi(d-i_j)\lambda(s)(\tau-2), & 1\leq \tau \leq 3   \\
c_j(\tau)-2\pi(d-i_j)\lambda(s)(\tau-4), & 3 < \tau < 4 
 \end{array},\right. 
\end{equation}
 where $c_j(\tau)$ is a continuous function that can be arranged to be constant on the $\tau$-interval $(0,1)\cup (5/4,4)$ and $c_j(5/4)-c_j(1)=-2\pi k_{i_j}$.  To arrange $c_j(\tau)$ to be constant in $(0,1)\cup (5/4,4)$, we need to adjust the construction of each $L_{\alpha}(k)$ in Section \ref{sec: lagrangian} so that $L_\alpha(k)$ twists $k_{\alpha\beta}$ times in the interval $(1, 5/4)$ instead of in the interval $(1,3)$.  All the results in this paper are valid (with some obvious changes in the arguments) when we use this modified construction for $L_{\alpha}(k)$, which is homotopic to the construction given in Section \ref{sec: lagrangian}.  We will use this modified construction only in this section. 

As $n$ increases, the $\tau$-coordinate of any intersection point, $p$, between any pair of Lagrangians $l_j$ and $l_{j'}$ in $\{l_0,\ldots, l_r\}$ moves toward $0$, $2$, or $4$.  Indeed, it follows from Equation \eqref{eqn: lines} that one of $\tau(p)$, $\tau(p)-2$, or $\tau(p)-4$ is equal to $\frac{c_j(\tau)-c_{j'}(\tau)}{\lambda(s)2\pi(i_{j'}-i_j)}\to 0$ as $\lambda(s)\to \infty$.  Consequently, given inputs $p_1,\ldots, p_d$, we can always choose $n$ to be large enough so that none of the $\tau$-coordinates of the given $p_i$'s will be in the the region $(1,5/4)$ after replacing $p_1,\ldots, p_d$ by their corresponding generators via the linear continuation map.  Furthermore, by choosing $n$ to be large enough, the appropriate lifts  $l_0,\ldots, l_r$ to the universal cover of the cylinder will no longer have any intersections in the $(1,5/4)$ region.

In each of the intervals $(0,1), (5/4,3)$, and $(3,4)$, $l_j$ is a line segment with the same slope of $2\pi(d-i_j)\lambda(s)$ in absolute value.   In each interval, these line segments belong to $r+1$ lines that may or may not intersect at a single point.  Whether they intersect in a single point or not depends on the $\psi$-intercepts $c_j(\tau)$'s and ratios between the differences in the slopes, which are independent of $\lambda(s)$.  Take the interval $(0,1)$, if the corresponding $(r+1)$ lines do intersect at a single point, then for $n$ large enough, the line segments in $l_0,\ldots, l_r$ will either always intersect in a single point in the interval $(0,1)$ or they will never intersect in that interval.  The same can be said for the other intervals.  We can then apply the same procedure to all edges and for all subsets of multiple Lagrangians in $\{\phi^1_{d\lambda}(L_0'),\ldots, L_d'\}$.  We can obtain a large enough $n$ for each of these cases and take the maximum value.  The argument for intersections inside the unbounded cylinders is the same.  There will be constant multiple intersections inside each pair of pants (in the complement of the edges), but because of the small perturbations we chose at the end of Section \ref{sec: hamiltonian}, they will be always be of degree zero (so, not exceptional), and they will never move or undergo crossing changes.

\subsubsection{Exceptional discs are constant.}  \label{sec: exceptional disc}

A pseudoholomorphic disc $u$ bounded by Lagrangians 
\[
l_0=\phi^1_{w_{i_0}\lambda H}(L_{i_0}'),\ldots, l_r=\phi^1_{w_{i_r}\lambda H}(L_{i_r}')
\] 
is either a nondegenerate polygon, a constant disc at the intersection of all $r+1$ Lagrangians, or a polygon with some of its corners being intersection points of multiple Lagrangians.  We want to investigate when such a pseudoholomorphic disc is exceptional, i.e. of index less than $2-r$.  A nondegenerate polygon has index
\[ 
2-r+2\cdot \#(\text{interior branch points}) + \#(\text{boundary branch points})\geq 2-r,
\] 
hence it's not exceptional.   

Next, we analyze the degenerate cases involving multiple intersection points.  At such a point, note that even though $l_0,\ldots, l_r$ may come from different components and hence have different orientations, we can change the orientation of some of them so that $l_0,\ldots, l_r$ have the same orientation on every edge in trying to compute the index of the disc.  This is because changing the orientation of a Lagrangian $l_j$ will change the degree of intersections between $l_{j-1}\cap l_{j}$ and $l_{j}\cap l_{j+1}$ in opposite ways leaving the overall index of the disc unchanged.  For the rest of this section, let us assume that $l_0,\ldots, l_r$ have the same orientation on every edge.

For any two Lagrangians $l_j, l_{j+1}$,  the degree of an intersection point $p\in l_j\cap l_{j+1}$ inside a cylindrical edge is either $0$ or $1$ depending on the location of $p$. If $p$ is in the positively wrapped region $(1,3)\times \mathbb R$ of some edge, then $\deg(p)=0$ because the slope of  $l_i$ is positive and larger than that of $l_{i+1}$ which is also positive.  On the other hand, if $p$ is in the negatively wrapped region $((0,1)\cup (3,4))\times \mathbb R$ of some edge, then $\deg (p)=1$.  

 \emph{Case 1:} Suppose $u$ is a constant disc with its image in a cylindrical end or in the positively wrapped region of a bounded edge (i.e. in $(1,3)\times S^1$). In this case, all the input and output intersection points have degree zero, hence $\ind u=0$.   When $r\geq 2$, $\ind u\geq 2-r$, so it is not an exceptional disc that contribute to the higher continuation map.

 \emph{Case 2:} Suppose $u$ is a constant disc with its image in the negatively wrapped region of a bounded edge (i.e. in $((0,1)\cup (3,4))\times S^1$).  In this case, all the input and output intersection points have degree 1, hence $\ind u=1-r<(2-r)$ and $u$ is an exceptional disc.  We have $\dim \ker D_{\bar{\partial},u}=r-2$ coming from the freedom to move the $r+1$ marked points, where $D_{\bar{\partial}, u}$ is the linearized Cauchy-Riemann operator.  (Note that $\dim \ker D_{\bar{\partial}, u}=0$ if we fix the marked points.  This is  because $u$ is constant with image $p$ and  $u^*TH=T_pH \cong \mathbb C$, and by the open mapping theorem, the only holomorphic disc with boundary in the union of the lines $T_pl_0, \ldots, T_pl_s$ is the constant map at the origin of the tangent space.) Hence $\dim \Coker D_{\bar{\partial},u}= r-1>0$ and $u$ is not regular.  This analysis shows that we need to perform a deformation to achieve transversality, and this will be our topic of the next subsection. 

\emph{Case 3:}  Suppose $u$ is a polygon with $\tilde r+1$ geometrically distinct vertices $p_0,\ldots, p_{\tilde r}$ ($\tilde r\geq 1$ since we assume $u$ is not constant) with some of its vertices being intersection points of multiple Lagrangians.  We know that the index of a nondegenerate polygon with $\tilde r+1$ vertices (i.e. bounded by $\tilde r+1$ Lagrangians) is at least $2-\tilde r$.   If a vertex of $u$ which is an intersection point of multiple Lagrangians is located in the positively wrapped region, then its contribution to $\ind u$ is zero.  If a vertex of $u$ is an intersection point of $v+2$ Lagrangians $(v>0)$ located in the negatively wrapped region, then the extra degenerate edges and vertices  add to the contribution of this vertex to $\ind u$ by $-v$.  That is, each of the $(r-\tilde r)$ extra Lagrangians contributes at the least a $-1$ to $\ind u$.  Hence, $\ind u\geq (2-\tilde r)-(r-\tilde r)=2-r$, i.e. $u$ is not exceptional.

\subsubsection{Deformation of constant discs.}
At each intersection, $p$, of $r+1$ ($r>1$) Lagrangians
 \[
 l_0=\phi^1_{w_{i_0}\lambda(s) H}(L_{i_0}'),\ldots, l_r=\phi^1_{w_{i_r}\lambda(s) H}(L_{i_r}')
 \] 
 in the negatively wrapped portion of a cylinder, we want to pick Hamiltonian perturbations $ch_0(s), \ldots, ch_r(s)$ that perturb these Lagrangians locally so that as soon as we turn on the Hamiltonian perturbation, the constant disc $u$ at the intersection of these Lagrangians is removed and we don't introduce any new holomorphic discs with boundary on $l_0,\ldots, l_r$, in that order.  Recall that each weight $w_{i_j}=d-i_j$, and the Lagrangians are ordered with $i_0<\cdots <i_r$.  

Let $h(s)$ be a Hamiltonian supported near the moving intersection point (this is the only way in which it depends on $s$) such that locally $h(s)(\tau,\psi)=\psi-\psi_0$, so $X_h\sim -\frac{\partial}{\partial \tau}$ points along the negative $\tau$-axis.  We use $ch_j(s)=cw_{i_j}h(s)$ to perturb $l_j$.  This pushes the Lagrangians into the desired positions as shown in Figure \ref{fig: deformation}.  Indeed, up to rescaling of both axes, each $l_j$ was initially the line $\psi=-w_{i_j}\tau$, and after translation, it becomes $\psi=-w_{i_j}(\tau+cw_{i_j})$.  Lagrangians $l_j$ and $l_k$ intersect where $\psi=-w_{i_j}(\tau+cw_{i_j})=-w_{i_k}(\tau+cw_{i_k})$, so $\tau=-c(w_{i_j}+w_{i_k})$.  For a fixed $j$, these are all distinct and in the correct order.  These local perturbations for the Lagrangians are automatically consistent because they are built from a single $ch(s)$ and the existing weights.

\begin{figure}[h]
\centering
	\scalebox{0.7}{\includegraphics{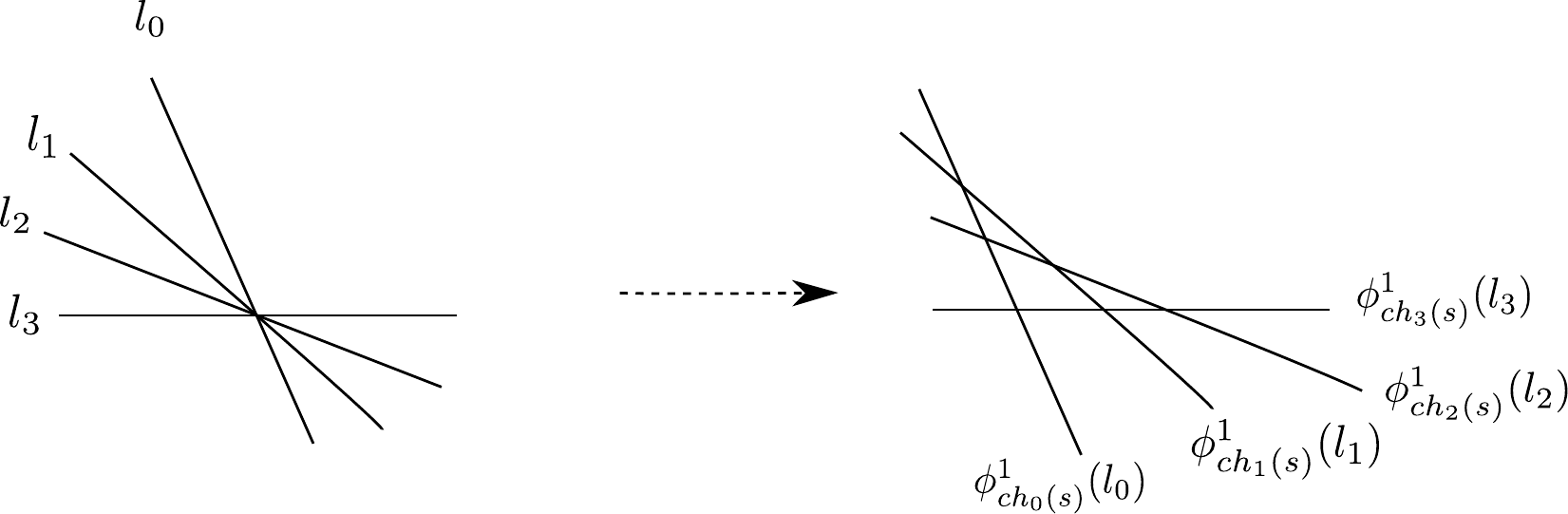}}
	\caption{Left: a constant disc at the intersection of multiple Lagrangians in the negatively wrapped portion of a cylinder.  Right: after Hamiltonian perturbations, the constant disc is removed without introducing new holomorphic discs.}
	\label{fig: deformation}
\end{figure}

The constant map $u$, with its image being the point $p$, is a non-regular solution to the perturbed Floer equation $(du+cX_{h(s)}\otimes \beta)^{0,1}=0$ for $c=0$;  we study its deformations among solutions to this equation for small $c$.  Let us consider the first order variation 
\[\left(d(u+cv)+cX_h(u+cv)\otimes \beta\right)^{0,1}=0,\]
where $v: D\to T_pH=\mathbb C$.   From the above equation, $v$ satisfies the linearized equation $\bar{\partial}v=(X_h(p)\otimes \beta)^{0,1}$ with boundary conditions on the real line $T_pl_j$.  We claim that, no matter the position of the boundary marked points on the domain $D$, this linearized equation has no solutions.  Indeed, we rewrite the linearized equation for $\tilde v=v-tX_h(p)$, where $t$ is a coordinate on $D$ with $dt=\beta$, and $t=-w_{i_j}$ on the $j$-th piece of the boundary of $D$.  Then a solution $\tilde v$ is a holomorphic map with boundary conditions in $T_pl_j+\frac{\partial l_j}{\partial c}=T_pl_j+w_{i_j}X_h(p)$, which are lines in $\mathbb C$ as in Figure \ref{fig: deformation}.   Hence this linearized equation has no solution; that is, the projection to $\Coker_{D_{\bar{\partial}_J}}$ of the perturbation term yields a nonvanishing section of the obstruction bundle over the moduli space of solutions.  This means that there is no way of deforming the constant map $u$ to a solution with the perturbation term added, i.e. cascades with such a constant component do not contribute to the continuation map.

\subsection{Wrapped Fukaya category of a pair-of-pants: some notations.}

We can view $H$ as a union of pairs of pants, $H=\bigcup_{\alpha,\beta,\gamma}P_{\alpha\beta\gamma}$, where each $P_{\alpha\beta\gamma}$ is a pair of pants whose image $\Log_t(P_{\alpha\beta\gamma})$ is adjacent to all three components  $C_{\alpha}, C_{\beta},$ and $C_{\gamma}$.  Also, if $e_{\alpha\beta}\cong (0,4)\times S^1$ is bounded, we require that the leg $P_{\alpha\beta\gamma}\cap e_{\alpha\beta}\cong (0,3)\times S^1$.  This way, if $e_{\alpha\beta}$ is a bounded edge connecting two pairs of pants $P_{\alpha\beta\gamma}$ and $P_{\alpha\beta\eta}$, then these two pairs of pants will overlap on the positive wrapping part of $e_{\alpha\beta}$ (see Fig. \ref{fig: hamiltonian}).  Denote $S_{\alpha\beta}=P_{\alpha\beta\gamma}\cup P_{\alpha\beta\eta}$. When we are considering a Lagrangian restricted to a pair of pants, $L_\alpha(k)\cap P_{\alpha\beta\gamma}$, we still call it $L_{\alpha}(k)$ for convenience.  Note that $L_{\alpha}(k)$ are actually equivalent to $L_{\alpha}(0)$ in the wrapped Fukaya category of the pair of pants. The objects $L_i(0)$, $i\in\{\alpha,\beta,\gamma\}$, generate the wrapped Fukaya category of $P_{\alpha\beta\gamma}$.

 The definitions for Floer complex and product structures introduced in Section \ref{subsec: background} apply to a pair of pants $P_{\alpha\beta\gamma}$ as well. (In fact, if we extend each bounded leg of a pair of pants to an infinite cylinderical end, then the quadratic Hamiltonian $H_n$ on $(1,3)\times S^1$ is equivalent to a linear Hamiltonian on that cylindrical end.)  We can define the Floer complex $CF^*_{P_{\alpha\beta\gamma}}(L_i(k_1), L_j(k_2); H_n)$ and its set of generators $\mathcal X_{P_{\alpha\beta\gamma}} (L_i(k_1), L_j(k_2); H_n)$ for any pairs of objects $L_i(k_1)$, $L_i(k_2)$ in $\mathcal W(P_{\alpha\beta\gamma})$.  Similarly, we can define the differential
\[d: CF_{P_{\alpha\beta\gamma}}^*(L_i(k_1),L_j(k_2); H_n)\to CF_{P_{\alpha\beta\gamma}}^*(L_i(k_1),L_j(k_2); H_n)\] and products
\begin{equation*} \begin{array}{l}
\mu_{P_{\alpha\beta\gamma}}^d(H_n): CF_{P_{\alpha\beta\gamma}}^*(L_{i_{d-1}}(k_{d-1}),L_{i_d}(k_d); H_n)\otimes \cdots \otimes CF_{P_{\alpha\beta\gamma}}^*(L_{i_0}(k_0), L_{i_1}(k_1); H_n) \hspace{0.7in} \\
\hspace{3.8in} \to CF_{P_{\alpha\beta\gamma}}^*(L_{i_0}(k_0), L_{i_d}(k_d); dH_n)
\end{array}
\end{equation*}
by counting perturbed pseudo-holomorphic discs  $u:D \to P_{\alpha\beta\gamma}$ with strip-like ends satisfying appropriate boundary and limiting conditions.  
For the same reason as in the previous section, the continuation maps 
\[\kappa: CF^*_{P_{\alpha\beta\gamma}}(L_i(k_1), L_j(k_2); H_n) \to CF^*_{P_{\alpha\beta\gamma}}(L_i(k_1), L_j(k_2); H_N), \ \ \ n<N,\]
are inclusions and compatible with $A_\infty$ structures if $n>n_0$ for a large $n_0$.  We then have the wrapped Floer complex 
\[CW^*_{P_{\alpha\beta\gamma}}(L_i(k_1), L_j(k_2))=\bigcup_{n=n_0}^\infty CF^*_{P_{\alpha\beta\gamma}}(L_i(k_1), L_j(k_2);H_n)/\sim\] 
 and the product 
\[\mu_{P_{\alpha\beta\gamma}}^d: CW_{P_{\alpha\beta\gamma}}^*(L_{d-1},L_d)\otimes \cdots \otimes CW_{P_{\alpha\beta\gamma}}^*(L_0, L_1)\to CW_{P_{\alpha\beta\gamma}}^*(L_0, L_d).\]

\subsection{Computing $\mathcal W(H)$ from pair-of-pants decompositions} \label{section: PPD}  We can split the generators $\mathcal X_{P_{\alpha\beta\gamma}}(L_i(k_1), L_j(k_2); H_n)$ into two parts 
\begin{equation}
\mathcal X_{P_{\alpha\beta\gamma}}(L_i(k_1), L_j(k_2);H_n)=\mathcal J_{\alpha\beta}^\gamma(L_i(k_1), L_j(k_2);H_n) \cup \mathcal C_{\alpha\beta}(L_i(k_1), L_j(k_2);H_n),
\end{equation}
consisting of generators in $P_{\alpha\beta\gamma}\backslash \left((1,3)\times S^1\right)$ and $(1,3)\times S^1\subset e_{\alpha\beta}$, respectively.  Denote 
\[
\mathcal J_{\alpha\beta}^\gamma(H_n)=\bigcup_{i,j,k_1,k_2} \mathcal J_{\alpha\beta}^\gamma(L_i(k_1), L_j(k_2);H_n),
\] 
\[
\mathcal C_{\alpha\beta}(H_n)=\bigcup_{i,j,k_1,k_2} \mathcal C_{\alpha\beta}(L_i(k_1), L_j(k_2);H_n).
\]
Let $CF^*_{\mathcal J_{\alpha\beta}^\gamma}(L_i(k_1), L_j(k_2);H_n)$ and $CF^*_{\mathcal C_{\alpha\beta}}(L_i(k_1), L_j(k_2);H_n)$ be subspaces generated by\\ $\mathcal J_{\alpha\beta}^\gamma(L_i(k_1), L_j(k_2);H_n)$ and $\mathcal C_{\alpha\beta}(L_i(k_1), L_j(k_2);H_n)$, respectively.

  Let $\mathcal X_{P_{\alpha\beta\gamma}}(L_i(k_1), L_j(k_2))$ be the set of generators of the wrapped Floer complex $CW^*_{P_{\alpha\beta\gamma}}(L_i(k_1), L_j(k_2))$ given in Equation (\ref{eqn: wrapped complex}).   We can similarly define subsets of generators 
 \[
 \cJ_{\alpha\beta}^\gamma(L_i(k_1), L_j(k_2))= \bigcup_{n=n_0}^\infty \cJ_{\alpha\beta}^\gamma(L_i(k_1), L_j(k_2);H_n)/\sim,
 \]
 \[
 \cC_{\alpha\beta}(L_i(k_1), L_j(k_2))= \bigcup_{n=n_0}^\infty \cC_{\alpha\beta}(L_i(k_1), L_j(k_2);H_n)/\sim.  
 \]
 Furthermore, denote
 \[
 \cJ_{\alpha\beta}^\gamma = \bigcup_{i,j, k_1, k_2} \cJ_{\alpha\beta}^\gamma(L_i(k_1), L_j(k_2))=\bigcup_{n=n_0}^\infty\mathcal J_{\alpha\beta}^\gamma(H_n)/\sim,
 \]   
 \[
  \cC_{\alpha\beta} = \bigcup_{i,j, k_1, k_2} \cC_{\alpha\beta}(L_i(k_1), L_j(k_2))=\bigcup_{n=n_0}^\infty\cC_{\alpha\beta}(H_n)/\sim.
  \] 
Define  $CW^*_{\cJ_{\alpha\beta}^\gamma}(L_i(k_1), L_j(k_2))$ and $CW^*_{\cC_{\alpha\beta}}(L_i(k_1), L_j(k_2))$ to be subspaces of $CW^*_{P_{\alpha\beta\gamma}}(L_i(k_1),L_j(k_2))$ generated by $\cJ_{\alpha\beta}^\gamma(L_i(k_1), L_j(k_2))$ and $ \cC_{\alpha\beta}(L_i(k_1), L_j(k_2))$, respectively.

 Note that $\mathcal X_{P_{\alpha\beta\gamma}} (L_i(k_1), L_j(k_2); H_n)=\mathcal X_{P_{\alpha\beta\gamma}} (L_i(k_1), L_j(k_2); nH_1)$ because $nH_1$ only differs from $H_n$ by a constant on each edge and both Hamiltonians only act on the edges (and the small perturbations inside the pants yield the same generators as well).

\begin{lemma} \label{lemma: ideal}

Given $x_1 \in \mathcal X_{P_{\alpha\beta\gamma}}(L_{i_0}(k_0), L_{i_1}(k_1)),\ldots,x_d\in \mathcal X_{P_{\alpha\beta\gamma}}(L_{i_{d-1}}(k_{d-1}), L_{i_d}(k_d)),$ where at least one of $x_j, j=1,...,d$ is in $\mathcal J_{\alpha\beta}^\gamma$, then the output $\mu^d_{P_{\alpha\beta\gamma}}(x_1,\ldots, x_d)$ is in $CW^*_{\cJ_{\alpha\beta}^{\gamma}}(L_{i_0}(k_0), L_{i_d}(k_d))$.

\end{lemma}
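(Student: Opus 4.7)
The plan is to prove the contrapositive: if some term of $\mu^d_{P_{\alpha\beta\gamma}}(x_1,\ldots,x_d)$ lies in $\mathcal{C}_{\alpha\beta}$, then every input $x_j$ must lie in $\mathcal{C}_{\alpha\beta}$. The geometric picture is that $\mathcal{C}_{\alpha\beta}$-generators all live in the open overlap cylinder $C := (1,3)\times S^1 \subset e_{\alpha\beta}$, while $\mathcal{J}_{\alpha\beta}^{\gamma}$-generators lie in its complement inside $P_{\alpha\beta\gamma}$; the content of the lemma is then that a pseudo-holomorphic polygon contributing to $\mu^d$ whose output end enters $C$ must be trapped inside $C$.

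First I would convert the perturbed Floer problem for $\mu^d_{P_{\alpha\beta\gamma}}$ into a count of ordinary $\tilde J$-holomorphic polygons $u\colon D \to P_{\alpha\beta\gamma}$ with boundary on the perturbed Lagrangians $\phi^{d-j}_{n}(L_{i_j}(k_j))$, as in Section \ref{subsec: background}. I choose $\tilde J$ to be standard on the cylindrical edge region $(0,3)\times S^1$ of $e_{\alpha\beta}$, so that $\tau+i\psi$ is a holomorphic coordinate there. Two observations are then crucial: (i) the only Lagrangians among $\{\phi^{d-j}_{n}(L_{i_j}(k_j))\}$ that intersect the open cylinder $C$ are perturbations of $L_\alpha$'s and $L_\beta$'s, because the remaining Lagrangians $L_\gamma(k)$ (or those corresponding to yet other adjacent components) are supported in $P_{\alpha\beta\gamma}\setminus C$ and the flow $\phi^{w}_{n}$ acts only as a rotation in the $\psi$-direction on the edges and so does not move them into $C$; (ii) inside $C$, each $\phi^{w}_{n}(L_\alpha(k))$ and $\phi^{w}_{n}(L_\beta(k))$ is the graph $\psi(\tau)$ of a smooth function with derivative of size $\sim 2\pi n w$ (see Equation \eqref{eqn: flow}), which is large and positive once $n\gg|k_{\alpha\beta}|$; in particular these curves are transverse to the boundary circles $\{\tau=1\}$ and $\{\tau=3\}$.

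Assuming $u$ contributes to $\mu^d_{P_{\alpha\beta\gamma}}(x_1,\ldots,x_d)$ with output $q\in C$, let $V\subseteq D$ be the connected component of $u^{-1}(C)$ containing the output strip-like end. On $V$ the map $u$ is $\tilde J$-holomorphic with image in $C$, so $\tau\circ u$ is harmonic and $\psi\circ u$ is its harmonic conjugate. I would then apply the integrated maximum principle (the no-escape lemma of Abouzaid--Seidel \cite{AS10}) to $\bar C$ viewed as a Liouville subdomain with primitive $\tau\,d\psi$: together with observation (ii), this forces every strip-like end of $u$ to lie in $\bar C$. Because the generators of the Floer complex never sit on $\partial C$ (by transversality of the perturbed Lagrangians with $\{\tau=1\}$, $\{\tau=3\}$), each input $x_j$ must in fact lie in the open cylinder $C$, i.e., in $\mathcal{C}_{\alpha\beta}$, as desired.

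The main obstacle is applying the no-escape lemma cleanly, since the perturbed Lagrangians inside $C$ carry large positive wrapping and are not tangent to the Liouville flow $\tau\,\partial_\tau$ at $\partial C$. I would handle this either by a small Hamiltonian isotopy supported near $\partial C$ that straightens the perturbed Lagrangians to be cylindrical there (verifying that no generator in $C$ is created or destroyed), or alternatively by a direct Schwarz-reflection / open-mapping argument across the vertical lines $\{\tau=1\}$ and $\{\tau=3\}$ in the holomorphic coordinate $\tau+i\psi$, using that any hypothetical boundary or interior arc of $V$ mapping into $\{\tau=1,3\}$ would by openness of $u$ be forced to extend into the complement of $C$ and supply a strip-like end incompatible with the hypotheses. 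Either approach relies on the same geometric input established in step (ii), namely the graph structure and transversality of the perturbed Lagrangians inside $C$.
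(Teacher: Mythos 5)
Your proposal is conceptually aligned with the paper's --- both aim to show that a disc contributing a $\mathcal C_{\alpha\beta}$-output must have all its marked points in the positively wrapped region --- but the mechanism you propose (integrated maximum principle / no-escape lemma applied to $\bar C=[1,3]\times S^1$) is different from the paper's and has a real gap that I don't see how to close.

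The Abouzaid--Seidel no-escape lemma for a Liouville subdomain needs the Lagrangian boundary conditions to be invariant under the Liouville flow near the boundary of the subdomain. Inside $C$ the perturbed Lagrangians $\phi^{w}_{n}(L_\alpha(k))$, $\phi^{w}_{n}(L_\beta(k))$ are graphs $\psi(\tau)$ of slope $\approx 2\pi n w$, which is large precisely because $n$ is large; they cross $\{\tau=1\}$ and $\{\tau=3\}$ almost vertically, i.e.\ as far from cylindrical as possible. Your first fix --- a ``small Hamiltonian isotopy supported near $\partial C$'' to straighten them --- is not small at all: you would have to undo wrapping proportional to $n$, and doing so changes the set $\phi^1_{dn}(L_{i_0})\cap L_{i_d}$ in $C$, exactly the generators you are trying to keep track of. Your second fix via Schwarz reflection / open mapping only establishes that if $V=u^{-1}(C)$ touches $\partial C$ then $u$ extends past $\partial C$; but that is precisely the scenario you need to rule out, not a contradiction. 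The disc is perfectly allowed to leave $C$ along the Lagrangian boundary arcs and acquire a strip-like end at a $\mathcal J_{\alpha\beta}^\gamma$-generator at small $\tau$ --- that is what a nonvanishing product of the forbidden type would look like, and ``openness of $u$'' does not exclude it.

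The paper instead avoids the maximum principle altogether. It passes to the universal cover of $e_{\alpha\beta}$, writes the two boundary arcs $C_1,C_2$ emanating from the output $y$ explicitly as graphs using Equation~\eqref{eqn: lines}, and observes: (a) because a rigid disc has no boundary branch points and each Lagrangian lift is a graph over $\tau$, the curves $C_1,C_2$ are themselves graphs $\psi_1(\tau)>\psi_2(\tau)$ over the interval between the backtracking vertex and $\tau(y)$; and (b) the slopes of both graphs grow linearly in $n$ (via the linear continuation maps identifying generators across levels), while $\psi_1(0)-\psi_2(0)$ stays bounded because it is dictated by the portion of $\partial u$ outside the edge, which moves by a bounded homotopy. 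Hence for $n$ large the two arcs must cross before $\tau$ reaches $1$, contradicting the disc having a vertex in $\mathcal J_{\alpha\beta}^\gamma$. This is exactly the step missing from your write-up: you correctly record that the slopes are large, but you do not use the large-slope hypothesis to drive an actual contradiction, and the tool you invoke to do so does not apply to Lagrangians that are non-cylindrical at $\partial C$. You would need to replace the no-escape lemma by a slope/intersection argument of the paper's type, or establish a genuinely new version of the no-escape lemma adapted to very steep boundary conditions.

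One smaller point: your Liouville primitive $\tau\,d\psi$ on $C$ does not make $[1,3]\times S^1$ a Liouville subdomain (the Liouville vector field $\tau\partial_\tau$ is inward-pointing at $\tau=1$); you would need something like $(\tau-2)\,d\psi$. This is fixable, but it underlines that the Liouville-theoretic framing isn't the natural one here.
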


\begin{proof} 
From the definition of the wrapped Floer complex, there is a sufficiently large $N$, dependent on $x_1,\ldots, x_d$, such that for all $n\geq N$, $x_j\in \mathcal X_{P_{\alpha\beta\gamma}}(L_{i_{j-1}}(k_{j-1}), L_{i_{j}}(k_{j}))$ has a representative $x_j\in \mathcal X_{P_{\alpha\beta\gamma}}(L_{i_{j-1}}(k_{j-1}), L_{i_{j}}(k_{j}); H_n)$, for all $j=1,\ldots, d$ (we use the same notation for the representative $x_j$ for convenience).   Also at least one of $x_j$ is in $\mathcal J_{\alpha\beta}^{\gamma}(H_n)$. 

 To prove this lemma, we would like to show that there is a sufficiently large integer $N$, dependent on $x_1,\ldots, x_d$, such that for all $n\geq N$, any generator $y$ appearing in the output of the product 
 \[
 \mu_{P_{\alpha\beta\gamma}}^d(x_d,\ldots, x_1; H_n) \in CF_{P_{\alpha\beta\gamma}}^*(L_{i_0}(k_0), L_{i_d}(k_d); dH_n)
 \] 
is in $\cJ_{\alpha\beta}^\gamma(dH_n)$.

 We prove by contradiction.  Suppose a generator $y$ appearing in the output can be in $\cC(dH_n)$ for infinitely many values of $n$.  Due to this assumption, $i_0, i_d \in \{\alpha,\beta\}$.  The output $\mu^d_{P_{\alpha\beta\gamma}}(x_1,\ldots, x_d)$ is given by the count of index $(2-d)$ pseudoholomorphic discs (with a modified almost-complex structure) with boundaries on $\phi_n^d(L_{i_0}(k_0)), \phi_n^{d-1}(L_{i_1}(k_1)),\ldots$, and $L_{i_d}(k_d)$ and with strip-like ends converging to intersection points $\phi_n^{d-1}(x_1)\in \phi_n^d(L_{i_0}(k_0))\cap \phi_n^{d-1}(L_{i_1}(k_1))$, $\ldots$, $x_d \in \phi_n^1(L_{i_{d-1}}(k_{d-1}))\cap L_{i_d}(k_d)$, and $y \in \phi_{n}^d(L_{i_0}(k_0))\cap L_{i_d}(k_d)$.

 From now on we will only consider the universal cover of $e_{\alpha\beta}$ and lifts of all Lagrangians in $e_{\alpha\beta}$ to this universal cover.   We keep the same notation for convenience.

 The boundary of a holomorphic disc satisfying the above traces out two curves on $e_{\alpha\beta}$ starting at $y$, each of which is connected and piecewise smooth.  One curve $C_1$ consists of boundary arcs in $\phi_n^d(L_{i_0}(k_0))$, $\phi_n^{d-1}(L_{i_1}(k_1))$, $\ldots$, $\phi_n^{d-c_1}(L_{i_{c_1}}(k_{c_1}))$.  The other curve $C_2$ consists of boundary arcs in $L_{i_d}(k_d)$, $\phi_{n}^1(L_{i_{d-1}}(k_{d-1}))$, $\ldots$, $\phi_{n}^{c_2}(L_{i_{d-{c_2}}}(k_{d-{c_2}}))$.  We choose the largest possible $c_1$ and $c_2$ so that the four conditions below are satisfied.  We list the first three now and the fourth later:
\begin{enumerate}
	\item all of $i_0,\ldots, i_{c_1}, i_d, i_{d-1},\ldots, i_{d-{c_2}} \in \{\alpha, \beta\}$;
	\item $C_1$ and $C_2$ do not contain any input intersection points that are not in $e_{\alpha\beta}$;
	\item $d-c_2> c_1$.
\end{enumerate}
Note that if the boundary of the holomorphic disc leaves $e_{\alpha\beta}$ and enters back into $e_{\alpha\beta}$ again, then it must go through an intersection point outside $e_{\alpha\beta}$.  This is because if it doesn't go through an intersection point outside $e_{\alpha\beta}$, then it must create a boundary branch point by backtracking along a Lagrangian, but a rigid holomorphic disc does not have any boundary branch points.  Hence, $C_1$ and $C_2$ are connected, and they don't leave $e_{\alpha\beta}$ and then enter back.

 A fourth condition for choosing $c_1$ and $c_2$ becomes necessary when  $C_1$ and $C_2$ satisfying conditions (1)-(3) intersect at an input point, which happens if and only if the entire holomorphic disc is contained in $e_{\alpha\beta}$ with its boundary being the closed loop $C_1\cup C_2$.  In this case, $d-c_2-1=c_1$.   Let $\tau_c=\min\{\tau|(\tau,\psi)\in C_1\cup C_2)\}$.  Every Lagrangian involved has the property that its lift intersects each fiber of the universal cover of $e_{\alpha\beta}$ at only one point.  Hence $\tau_c$ must be the $\tau$ coordinate of an intersection point $\tilde x_c=\phi_n^{d-c}(x_c)\in \phi_n^{d-(c-1)}(L_{i_{c-1}}(k_{c-1}))\cap \phi_n^{d-c}(L_{i_{c}}(k_{c}))$, i.e. $\tau_c=\tau(\tilde x_c)$. To summarize, we require that
\begin{itemize}
	\item [(4) ] if the holomorphic disc is contained in $e_{\alpha\beta}$, then choose $d-c_2-1=c_1=c$ where $\tau(\tilde x_c)=\min\{\tau|(\tau,\psi)\in C_1\cup C_2\}$.
\end{itemize}
Figure \ref{fig: ideal0} illustrates the subset of the holomorphic disc which lies inside $e_{\alpha\beta}$ with boundary $C_1$ and $C_2$. \\
\begin{figure}[h]
\centering
	\scalebox{0.6}{\includegraphics{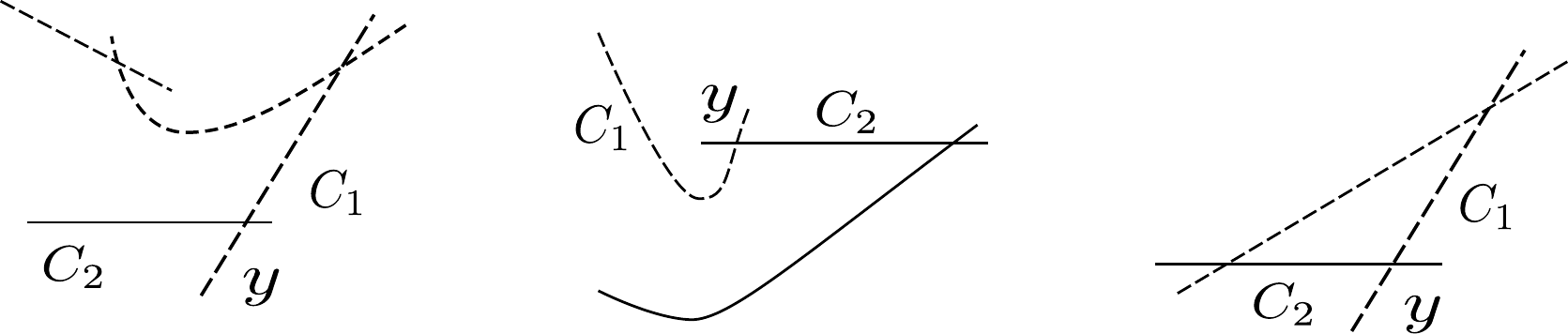}}
	\caption{The subset of the holomorphic disc which lies inside $e_{\alpha\beta}$ has boundary on $C_1\cup C_2$. The dashed curves represent $C_1$. The right-most picture illustrates the situation where the curves $C_1$ and $C_2$ intersect at an input point.}
	\label{fig: ideal0}
\end{figure}
	Observe for each $\tau<\tau(y)$, the fiber over $\tau$ intersects $C_1$ and $C_2$ at no more than one unique point $(\tau, \psi_1(\tau))\in C_1$ and one unique point $(\tau, \psi_2(\tau))\in C_2$.  We show this observation is true by contradiction.  If the fiber of the universal cover over some $\tau<\tau(y)$ intersects $C_1$ at more than one point, then an interior point of $C_1$ must be an intersection point $\tilde x_b=\phi_n^{d-b}(x_b)\in \phi_n^{d-(b-1)}(L_{i_{b-1}}(k_{b-1}))\cap \phi_n^{d-b}(L_{i_{b}}(k_{b}))$ at which the $\tau$ coordinate of $C_1$ backtracks, meaning that $\tau(\tilde x_b)$ is the minimum value of $\tau$ in an open neighborhood of $\tilde x_b$ in $C_1$. The lift of each Lagrangian intersects each fiber (level of $\tau$) of the universal cover of $e_{\alpha\beta}$ at only one point, the rigid holomorphic disc has no branch point, and each corner of the disc is convex.  For these reasons, the $\tau$ coordinate cannot backtrack more than once along $C_1\cup C_2$  , and where it backtracks, $\tau(\tilde x_b)$ is actually the minimum value of $\tau$ achieved by the holomorphic disc, i.e. the holomorphic disc is contained in $\{(\tau,\psi)\in e_{\alpha\beta} | \tau \geq \tau(\tilde x_b)\}.$  From property (4), $b=c$ and $\tilde x_b$ is the end point of $C_1$, contradicting $\tilde x_b$ being an interior point of $C_1$.  The same reasoning can be applied for $C_2$.

 Let's choose $N\gg |k_0|,\ldots, |k_d|$, then for every $\tau\in (0,\tau(y))$, the absolute value of the slope of the tangent line to $C_1$ at $(\tau, \psi_1(\tau))$ is greater than that of $C_2$ at $(\tau, \psi_2(\tau))$.  Also note that from inside the holomorphic disk, the angle between tangent lines $T_y\phi_{n}^d(L_{i_0}(k_0))$ and $T_yL_{i_d}(k_d)$ must be greater than $\pi/2$ due to the ordering of the boundary Lagrangians and the assumption that $n> |k_0|, |k_d|$.

 We want to show for $n>N$ sufficiently large, the holomorphic disc under consideration cannot have any input in $\mathcal J^\gamma_{\alpha\beta}$.  We analyse two cases. 

\emph{Case 1: the holomorphic disc is contained in $e_{\alpha\beta}$.}   Use the same notation that we used when explaining property (4).  The holomorphic disc is contained in $\{\tau\geq \tau(\tilde x_c)\}$ for an input $\tilde x_c$.  The slope of the tangent lines $T_{\tilde x_c} \phi_n^{d-(c-1)}(L_{i_{c-1}}(k_{c-1}))$ and $T_{\tilde x_c} \phi_n^{d-c}(L_{i_{c}}(k_{c}))$ have the same sign, i.e. negative if $\tau(\tilde x_c)\in (0,1)$ and positive if $\tau(\tilde x_c)\in (1,3)$.  Thus the angle between these tangent lines, from inside the holomorphic disc, must be less than $\pi/2$.  The input $\tilde x_c$ must be in $\mathcal C_{\alpha\beta}(H_n)$, i.e. $\tau(\tilde x_c)>1$, because the angle at any input in the positively wrapped region is less than $\pi/2$ and in the negatively wrapped region is greater than $\pi/2$.  This is due to the ordering of the boundary Lagrangians and the assumption that $n\gg |k_0|,\ldots, |k_d|$. So we just have a triangle in $\{\tau>1\}\subset e_{\alpha\beta}$ with no inputs in $\mathcal J^\gamma_{\alpha\beta}$. (See Fig. \ref{fig: ideal}a.)

 \emph{Case 2: the holomorphic disc is not entirely contained in $e_{\alpha\beta}$.}  As explained before, for every $\tau <\tau(y)$, the fiber over $\tau$ intersects $C_1$ once and $C_2$ once, with $\psi_1(\tau)>\psi_2(\tau)$ as illustrated in Figure \ref{fig: ideal}b.  Note that all boundary Lagrangians $\phi_n^{d-j}(L_{i_j}(k_j))$ are dependent on $n$, so are $\psi_1, \psi_2$. The values of $\psi_1(0)$ and $\psi_2(0)$ are dictated by inputs outside of $e_{\alpha\beta}$, so they will stay almost constant as a function of $n$.  Indeed, $\psi_1(0)$ and $\psi_2(0)$ are determined by the remaining portion of the boundary of the disc (other than $C_1\cup C_2$).  As $n$ varies and the inputs $\tilde x_{c_1},\ldots, \tilde x_{d-c_2}$ move by continuation, this boundary curve varies by a homotopy inside the cylindrical ends and remains constant inside the pants, and in particular $\psi_1(0)$ and $\psi_2(0)$ remain constant.   Hence $\psi_1(0)>\psi_2(0)$ always, and their difference remains bounded no matter how large $n$ gets.  However, as $n$ gets large enough, $\psi_1(1)<\psi_2(1)$ because the absolute value of the slope of the tangent line to $C_1$ at each $(\tau, \psi_1(\tau))$ is greater than that of $C_2$ at $(\tau, \psi_2(\tau))$ and the slopes increase with $n$.  Hence  $C_1$ crosses $C_2$ before reaching $\tau=1$, which contradicts an assumption that $y\in \cC_{\alpha\beta}(dH_n)$.  Hence, we can pick a sufficiently large $N$, so that for all $n\geq N$, there does not exist any rigid holomorphic disc with the given inputs $x_1,\ldots, x_d$, at least one of which lies in $\mathcal J_{\alpha\beta}^{\gamma}(H_n)$ and its output in $\mathcal C_{\alpha\beta}(dH_n)$.

\begin{figure}[h]
\centering
	\scalebox{0.6}{\includegraphics{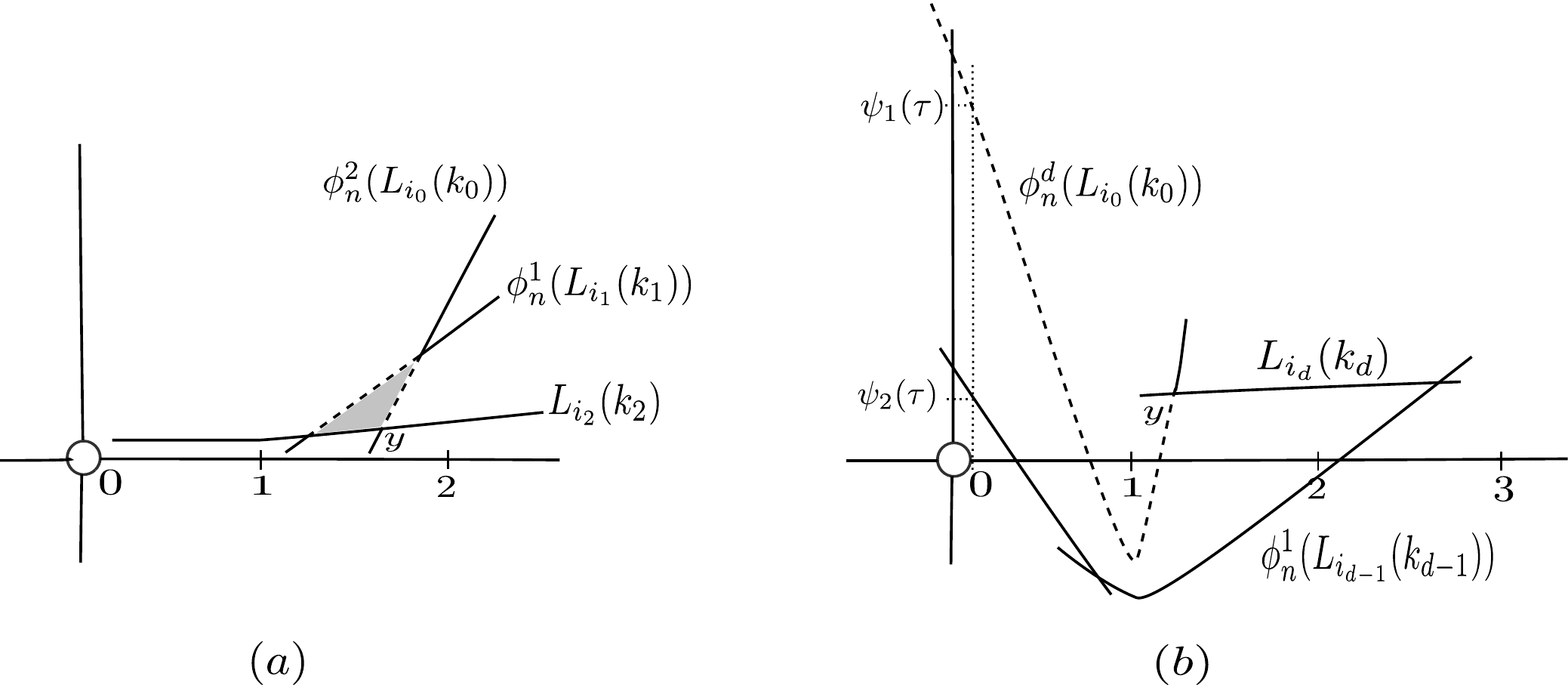}}
	\caption{(a) A generic holomorphic disc contained in $e_{\alpha\beta}$ as described in Case 1. (b) The portion in $e_{\alpha\beta}$ of a holomorphic disc that is not entirely contained in $e_{\alpha\beta}$ as described in Case 2.   Same as in Fig. \ref{fig: ideal0}, the dashed curves represent $C_1$.}
	\label{fig: ideal}
\end{figure}
\end{proof}

\begin{corollary}
There is a restriction map \[\rho_{\alpha\beta}^{\gamma}: \bigoplus_{L_i,L_j} CW^*_{P_{\alpha\beta\gamma}}(L_i, L_j)\to \bigoplus_{L_i, L_j} CW^*_{\mathcal C_{\alpha\beta}}(L_i, L_j)\] which is a quotient by $CW^*_{\mathcal J_{\alpha\beta}^\gamma}(L_i, L_j)$, and it is compatible with $A_\infty$-structures with no higher order terms.
\end{corollary}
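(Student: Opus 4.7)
The plan is to realize the corollary as a routine consequence of Lemma \ref{lemma: ideal}: that lemma says precisely that $CW^*_{\mathcal J_{\alpha\beta}^\gamma}$ is a two-sided $A_\infty$-ideal inside $CW^*_{P_{\alpha\beta\gamma}}$, and the restriction functor is then just the projection onto the quotient, which by construction carries no higher-order components.

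In more detail, I would first define $\rho_{\alpha\beta}^\gamma$ on the chain level by declaring it to act as the identity on generators lying in $\mathcal C_{\alpha\beta}(L_i,L_j)$ and as zero on generators lying in $\mathcal J_{\alpha\beta}^\gamma(L_i,L_j)$; this uses only the vector-space splitting
\[
CW^*_{P_{\alpha\beta\gamma}}(L_i,L_j) \;=\; CW^*_{\mathcal J_{\alpha\beta}^\gamma}(L_i,L_j)\;\oplus\; CW^*_{\mathcal C_{\alpha\beta}}(L_i,L_j)
\]
that comes from the splitting of generator sets. Next I would invoke Lemma \ref{lemma: ideal} to conclude that for every $d\geq 1$ and every tuple of composable generators, if any one input lies in $\mathcal J_{\alpha\beta}^\gamma$ then so does $\mu^d_{P_{\alpha\beta\gamma}}(x_1,\dots,x_d)$. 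This is exactly the statement that $CW^*_{\mathcal J_{\alpha\beta}^\gamma}$ is closed under all of the $A_\infty$-operations when paired with anything else, i.e.\ an $A_\infty$-ideal in the collection of hom-spaces.

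From the ideal property, the target $\bigoplus CW^*_{\mathcal C_{\alpha\beta}}(L_i,L_j)$ inherits a canonical quotient $A_\infty$-structure, defined by
\[
\mu^d_{\mathcal C_{\alpha\beta}}\bigl(\rho(x_1),\dots,\rho(x_d)\bigr) \;:=\; \rho\bigl(\mu^d_{P_{\alpha\beta\gamma}}(x_1,\dots,x_d)\bigr),
\]
which is well-defined because the right-hand side vanishes under $\rho$ whenever any $x_j$ lies in $\mathcal J_{\alpha\beta}^\gamma$. Taking $\rho_{\alpha\beta}^\gamma$ to have linear term $\rho^1$ as above and all higher terms $\rho^d=0$ for $d\geq 2$, the $A_\infty$-functor equations collapse to the single identity $\rho^1\circ \mu^d_{P_{\alpha\beta\gamma}} = \mu^d_{\mathcal C_{\alpha\beta}}\circ(\rho^1)^{\otimes d}$, which is literally the definition of the induced quotient operations.

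There is no real obstacle here beyond bookkeeping — the substantive content is Lemma \ref{lemma: ideal}, already proved — so the only care needed is verifying that the splitting of generators into $\mathcal J_{\alpha\beta}^\gamma$ and $\mathcal C_{\alpha\beta}$ passes cleanly to the direct limit defining $CW^*$, which follows because the linear continuation maps of Lemma \ref{lemma: linear continuation} are inclusions that respect the partition of generators by their location in $P_{\alpha\beta\gamma}\setminus((1,3)\times S^1)$ versus $(1,3)\times S^1$.
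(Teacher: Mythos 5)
Your proposal takes exactly the route the paper intends: the paper's entire proof is the one-line appeal to Lemma \ref{lemma: ideal}, and your write-up correctly unpacks why that lemma delivers the $A_\infty$-ideal property, the induced quotient structure, and the vanishing of higher-order terms of the restriction functor. The additional remark about compatibility of the generator partition with the direct limit via Lemma \ref{lemma: linear continuation} is a reasonable bookkeeping check and does not change the approach.
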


\begin{proof}
It follows from Lemma \ref{lemma: ideal}.
\end{proof}

\begin{lemma} \label{lemma: global} 
Given inputs $x_1 \in \mathcal X(L_{i_0}(k_0), L_{i_1}(k_1)),\ldots,x_d\in \mathcal X(L_{i_{d-1}}(k_{d-1}), L_{i_d}(k_d))$ and any generator $y\in \mathcal X(L_{i_0}(k_0), L_{i_d}(k_d))$ appearing in the output $\mu^d(x_1,\ldots, x_d)$, there is a single pair-of-pants $P_0\subset H$ and a sufficiently large $N$, dependent on $x_1,\ldots, x_d, y$, such that for all $n\geq N$, there are representatives $x_j\in \mathcal X_{P_0}(L_{i_{j-1}}(k_{j-1}), L_{i_{j}}(k_{j}); H_n), y\in  \mathcal X_{P_0}(L_{i_0}(k_0), L_{i_d}(k_d); dH_n)$ for all $j=1,\ldots, d$.  Hence, $\mu^d(x_1,\ldots,x_d)=\mu^d_{P_0}(x_1,\ldots,x_d)$.

\end{lemma}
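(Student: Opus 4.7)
The plan is to reduce the statement to the geometric claim that every rigid pseudo-holomorphic disc $u$ realizing the coefficient of $y$ in $\mu^d(x_1,\ldots,x_d)$ is, for $n$ sufficiently large, contained in a single pair-of-pants $P_0$ of the decomposition $H=\bigcup P_{\alpha\beta\gamma}$. Once this localization is established, the disc $u$ equally contributes to $\mu^d_{P_0}$ with the same almost-complex structure and Hamiltonian perturbation (these are chosen consistently on each pair-of-pants and its cylindrical overlaps), so the coefficient of $y$ in $\mu^d$ matches the coefficient of the corresponding representative of $y$ in $\mu^d_{P_0}$. Selecting $P_0$ is then driven by the existence of such a disc: all corners $x_1,\ldots,x_d,y$ automatically have representatives in $P_0$ via the continuation inclusions, at Hamiltonian levels $H_n$ and $dH_n$ respectively.

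The geometric claim I would prove by contradiction. Suppose $u$ is not contained in any single pair-of-pants; then its image must span some cylindrical edge $e_{\alpha\beta}\cong (0,4)\times S^1$, meaning there exist points of $u(D)$ with $\tau<1$ and with $\tau>3$. The part of the boundary of $u$ that sits in the overlap region $(1,3)\times S^1$ lies on wrapped Lagrangians of the form $\phi_n^{d-j}(L_\alpha(k_j))$ or $\phi_n^{d-j}(L_\beta(k_j))$, whose slopes in the universal cover of $e_{\alpha\beta}$ grow linearly in $n$ by the formula in Equation~\eqref{eqn: flow}. Following the tracing strategy of Lemma~\ref{lemma: ideal}, I would choose two boundary curves $C_1,C_2$ emanating from a suitable corner of $u$ and propagating along the ordered boundary of $u$, using monotonicity of $\tau$ along each Lagrangian arc (valid because rigid holomorphic discs have no boundary branch points) to parametrize them as graphs $\psi=\psi_1(\tau),\psi=\psi_2(\tau)$. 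Comparing the $\psi$-intercepts and the slope differences, which are respectively bounded and linear in $n$, one finds for $n$ sufficiently large that $C_1$ and $C_2$ are forced to cross within $(1,3)\times S^1$ before they can reach opposite ends of $e_{\alpha\beta}$. This contradicts the disc being simply connected with boundary sweeping out these curves.

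The main obstacle is that the $\psi$-intercepts of $C_1$ and $C_2$ and the $\tau$-coordinates of corners in $e_{\alpha\beta}$ all vary with $n$ under continuation, so the slope estimates must be made uniform. In particular, when corners $x_j$ or $y$ themselves lie in $\mathcal{C}_{\alpha\beta}$ or $\mathcal{J}^\gamma_{\alpha\beta}$, their $\tau$-coordinates drift toward $\tau=0,2,4$ as $n\to\infty$ while the remaining boundary of $u$ outside $e_{\alpha\beta}$ varies by a homotopy that keeps its intrinsic-pants portion unchanged, so the entering/exiting $\psi$-values of $C_1,C_2$ stay bounded. This bounded/unbounded dichotomy is exactly what drives the contradiction, mirroring Case~2 of Lemma~\ref{lemma: ideal} and extending it to the global setting by applying the same argument to each cylinder $u$ might try to span.
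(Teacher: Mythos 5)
Your proposal follows the same contradiction-by-tracing strategy as the paper's proof: assume the disc spans a cylinder $e_{\alpha\beta}$, parametrize the two boundary curves as graphs $\psi_1(\tau),\psi_2(\tau)$ over the universal cover, and derive a crossing contradiction from slope differences growing linearly in $n$ while the $\psi$-values at $\tau=2$ stay bounded. The core mechanism is identical.

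However, there is a gap in how you propose to handle corners inside the overlap region. In your final paragraph you allow inputs $x_j$ to lie in $\mathcal{C}_{\alpha\beta}$ and try to push the slope argument through by tracking how their $\tau$-coordinates drift as $n\to\infty$. This does not obviously close: if an input corner sits in $(1,3)\times S^1$, the arcs $C_1,C_2$ are anchored to that corner, and the claim that $\psi_1(2),\psi_2(2)$ remain bounded (which relies on those $\tau=2$ points lying on \emph{free} boundary arcs fixed by the flow at its fixed locus $\tau=2$) is no longer automatic. The paper avoids this entirely with a preliminary convexity observation: an input in the positively wrapped region has corner angle $<\pi/2$, so by convexity of rigid discs at their corners the disc cannot pass beyond it, and therefore cannot escape into both $P_{\alpha\beta\gamma}\setminus e_{\alpha\beta}$ and $P_{\alpha\beta\eta}\setminus e_{\alpha\beta}$. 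This rules out inputs in $\mathcal{C}_{\alpha\beta}$ at the outset, reducing the possibilities in $[1,3]\times S^1$ to ``no vertex'' or ``exactly the output $y$'', after which the two-graph slope-versus-intercept comparison goes through cleanly in each case. Your argument needs this (or an equivalent) step to justify treating $C_1,C_2$ as corner-free graphs through $\tau=2$ with $n$-independent intercepts.
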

\begin{proof}

 From the assumptions, there is a sufficiently large $N_0$, dependent on $x_1,\ldots, x_d, y$, such that for all $n\geq N_0$, $x_j$ has a representative $x_j\in \mathcal X(L_{i_{j-1}}(k_{j-1}), L_{i_{j}}(k_{j}); H_n)$, for all $j=1,\ldots, d$, and $y\in \mathcal X_{P_0}(L_{i_0}(k_0), L_{i_d}(k_d); dH_n)$.  Suppose the holomorphic disc has at least one of its vertices is contained in a pair of pants $P_{\alpha\beta\gamma}$, but not in an adjacent pair-of-pants $P_{\alpha\beta\eta}$, and at least one of its vertices is contained in  $P_{\alpha\beta\eta}$ but is not contained in $P_{\alpha\beta\gamma}$.  We will show by contradiction that such a holomorphic disc cannot exist.  The same arguments also exclude discs in which an edge goes into $P_{\alpha\beta\eta}$ to reach an vertex lying in another pair-of-pants even further away from $P_{\alpha\beta\gamma}$. Hence, all intersections points are actually in a single pair-of-pants.  Note that $P_{\alpha\beta\gamma}$ and $P_{\alpha\beta\eta}$ overlap on the cylinder $e_{\alpha\beta}$.  We focus on the portion of the disc that lies in $e_{\alpha\beta}$, and the manner in which it escapes into both ends of the cylinder.  Again, we view perturbed pseudoholomorphic discs as ordinary pseudoholomorphic discs with perturbed boundaries as explained in Section \ref{subsec: background}. 

None of the inputs can be in $\mathcal C_{\alpha\beta}(H_n)$, which is in the positively wrapped region shared by both pairs of pants.  This is due to what we noticed before (in the proof of Lemma \ref{lemma: ideal}) that the angle at any input in the positively wrapped region is less than $\pi/2$.  Then by convexity, the holomorphic disc will not go beyond that input point, i.e. it does not escape into both $P_{\alpha\beta\gamma}\backslash e_{\alpha\beta}$ and $P_{\alpha\beta\eta}\backslash e_{\alpha\beta}$.  Hence, in the region $[1,3]\times S^1\subset e_{\alpha\beta}$, either there's no vertex at all, or there is one output generator.

First, assume the output generator $y$ is not in the positively wrapped region, then there's no intersection point at all in this region.  Consider the universal cover of the cylinder $e_{\alpha\beta}$ and lifts of all Lagrangians in $e_{\alpha\beta}$ to this universal cover.  We can find two boundary arcs in Lagrangians $\phi_n^r(L_{i_{d-r}}(k_{d-r}))$ and $\phi_n^s(L_{i_{d-s}}(k_{d-s}))$ that are non-parallel lines and that they intersect the fiber of the universal cover over $\tau=2$ at $\psi_1(2)$ and $\psi_2(2)$, respectively.  The Hamiltonian perturbations we use actually keep $\psi_1(2)$ and $\psi_1(2)$ constant as $n$ varies, and the input marked points move along by continuation.  So, these two lines will cross each other for all $n$ that is large enough (e.g. for all $n>|\psi_1(2)-\psi_2(2)|+2\pi |k_{d-r}-k_{d-s}|$ will do).  So there is no holomorphic discs with the assumed inputs and output for sufficiently large $n$.  

 Now, suppose the output $y$ is in the positively wrapped region.  In the universal cover of $e_{\alpha\beta}$, the disc will look like one of the configurations in Figure \ref{fig: global}. The output point is bounded by Lagrangians $L_{i_d}(k_d)$ and $\phi_n^d(L_{i_0}(k_0))$, which have the biggest and smallest slopes, respectively.  When we increase $n$ to be large enough, $\psi_1(2)$ and $\psi_2(2)$ remain constant as before, and the boundary arcs will cross each other again inside the positively wrapped region, which rules out the existence of such holomorphic discs.
\begin{figure}[h]
\centering
	\scalebox{0.6}{\includegraphics{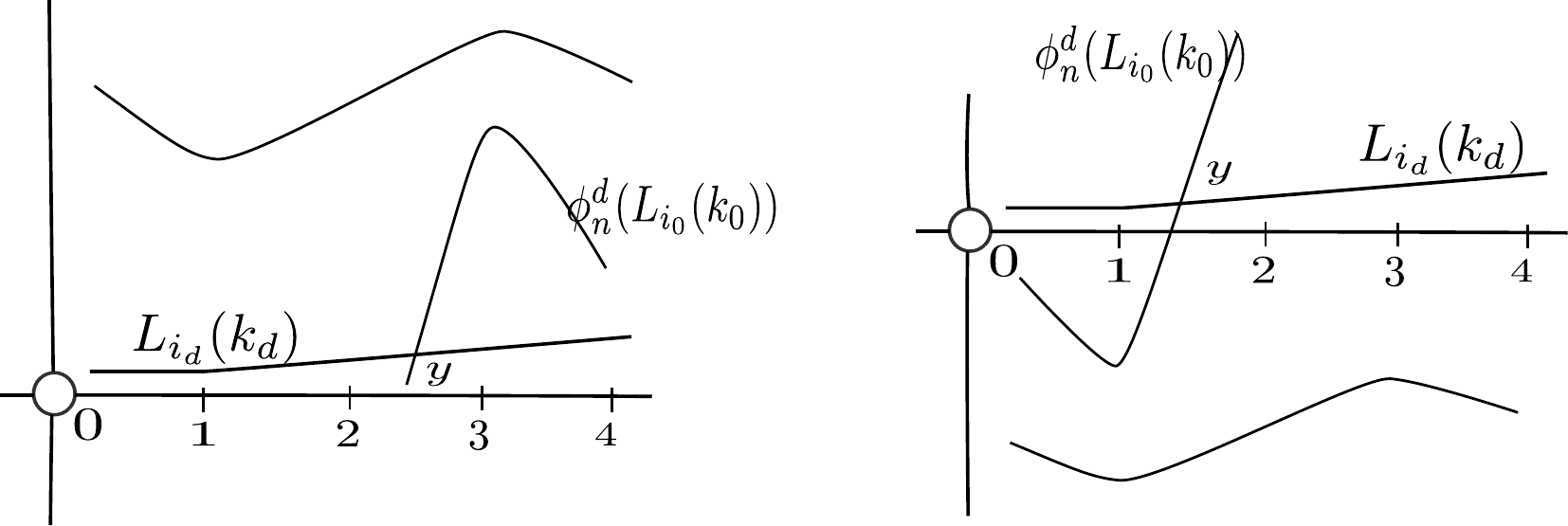}}
	\caption{Two (very similar) types of configurations, both with the output $y$ in the positively wrapped region.}
	\label{fig: global}
\end{figure}
\end{proof}

\begin{theorem} \label{thm: decomposition}
 The wrapped Fukaya category $\mathcal W(H)$ is split-generated by the Lagrangian objects $L_{\alpha}(k)$ where $\alpha\in A$ and $k\in \mathbb Z$.   In a suitable model for $\mathcal W(H)$, the morphism complex between any two objects, $L_{\alpha}(k)$ and $L_{\beta}(l)$, is generated by 
\vspace{-0.02in}
\begin{equation}
\mathcal X (L_{\alpha}(k), L_{\beta}(l)) =\left(\bigcup \mathcal X_{P_{\alpha\beta\gamma}}(L_{\alpha}(k), L_\beta(l))\right)/\sim
\end{equation}
where $\mathcal X_{P_{\alpha\beta\gamma}}(L_{\alpha}(k), L_\beta(l))$ is the set of generators of the morphism complexes in $\mathcal W(P_{\alpha\beta\gamma})$, and the equivalence relation identifies $x\in \mathcal X_{P_{\alpha\beta\gamma}\setminus \mathcal J_{\alpha\beta}^{\gamma}}$ with $\tilde x\in \mathcal X_{P_{\alpha\beta\eta}\setminus\mathcal J_{\alpha\beta}^{\eta}}$ whenever $\rho_{\alpha\beta}^{\gamma}(x)=\rho_{\alpha\beta}^\eta(\tilde x)$, where $\rho_{\alpha\beta}^{\gamma}$, $\rho_{\alpha\beta}^{\eta}$ are restriction maps  to the cylinder $\mathcal C_{\alpha\beta}=P_{\alpha\beta\gamma}\cap P_{\alpha\beta\eta}$.  Moreover, in this model, the $A_{\infty}$-products in $\mathcal W(H)$ are given by those in the pairs of pants.
\end{theorem}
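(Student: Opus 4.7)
The split-generation statement is already settled by Lemma \ref{lemma: generation}, so I would focus on (a) the description of the morphism complexes as a glued union and (b) the claim that $A_\infty$-products agree with those computed in the pairs of pants. The plan is to view this theorem as an assembly of the structural results already proved: Lemma \ref{lemma: linear continuation}, Lemma \ref{lemma: ideal} (and its corollary on restriction maps), and Lemma \ref{lemma: global}.

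\textbf{Identifying generators.} By construction, $H_n$ is constant inside each pair-of-pants region away from the edges, and the only time-$1$ chords of $H_n$ with endpoints on $L_\alpha(k), L_\beta(l)$ live on the edges $e_{\alpha\beta}$ (together with a single degree-$0$ fixed point inside each pair-of-pants coming from the small auxiliary perturbation at the end of Section \ref{sec: hamiltonian}). Each such chord lies either in $\mathcal{J}_{\alpha\beta}^\gamma$ for a unique $\gamma$, i.e.\ in the part of some $P_{\alpha\beta\gamma}$ that is disjoint from every overlap, or in the overlap $(1,3)\times S^1 \subset e_{\alpha\beta}$, in which case it belongs to $\mathcal{C}_{\alpha\beta}$ and appears both in $\mathcal{X}_{P_{\alpha\beta\gamma}}$ and in $\mathcal{X}_{P_{\alpha\beta\eta}}$ for the two pairs of pants $P_{\alpha\beta\gamma}, P_{\alpha\beta\eta}$ meeting along $e_{\alpha\beta}$. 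The equivalence $\rho_{\alpha\beta}^\gamma(x)=\rho_{\alpha\beta}^\eta(\tilde x)$ in the statement is by definition the identification of these two representatives of the same chord. Passing to the direct limit in $n$ via Lemma \ref{lemma: linear continuation} (continuation is an inclusion), this gives the asserted bijection $\mathcal{X}(L_\alpha(k),L_\beta(l)) = \bigcup_\gamma \mathcal{X}_{P_{\alpha\beta\gamma}}(L_\alpha(k),L_\beta(l))/\sim$.

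\textbf{Products are pair-of-pants-local.} For the $A_\infty$-structure, I would invoke Lemma \ref{lemma: global} directly: given inputs $x_1,\dots,x_d$ and any output $y$ appearing in $\mu^d(x_1,\dots,x_d)$ in $\mathcal{W}(H)$, there is a single pair of pants $P_0 \subset H$ and a threshold $N$ such that for all $n\geq N$, the chords $x_1,\ldots,x_d,y$ admit representatives in $P_0$ and every rigid $\mu^d$-disc contributing to the coefficient of $y$ is contained in $P_0$. Hence $\mu^d = \mu^d_{P_0}$ on these inputs, which is exactly the assertion that the $A_\infty$-products in the glued model are given by those of the pairs of pants.

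\textbf{Consistency of the gluing.} The only ambiguity is that when all of $x_1,\dots,x_d,y$ lie in an overlap $\mathcal{C}_{\alpha\beta}$, one may take $P_0 = P_{\alpha\beta\gamma}$ or $P_0 = P_{\alpha\beta\eta}$. The corollary to Lemma \ref{lemma: ideal} shows that the restriction maps $\rho_{\alpha\beta}^\gamma, \rho_{\alpha\beta}^\eta$ are strict morphisms of $A_\infty$-algebras (no higher order terms), and when every input lies in $\mathcal{C}_{\alpha\beta}$ the contributing discs are trapped in the cylinder $e_{\alpha\beta}$ itself, which sits inside both pairs of pants; hence $\mu^d_{P_{\alpha\beta\gamma}}$ and $\mu^d_{P_{\alpha\beta\eta}}$ give the same count. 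The $A_\infty$-relations in the glued model are then inherited from the $A_\infty$-relations in each $\mathcal{W}(P_{\alpha\beta\gamma})$, since on any given tuple of inputs all the relevant products take place inside a single pair of pants.

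\textbf{Main obstacle.} The genuinely hard steps are already carried out in Lemmas \ref{lemma: ideal} and \ref{lemma: global}: the former, which rules out holomorphic discs from interior generators escaping into the overlap cylinder, ensures that restriction is a strict quotient of $A_\infty$-algebras; the latter, which confines discs to a single pair of pants via the convexity/slope comparison between $C_1$ and $C_2$ for $n\gg 0$, gives the locality of the $A_\infty$-products. Once these are in hand, the theorem is an organizational statement, and my only remaining obstacle is to verify — by unwinding the direct-limit definition \eqref{eqn: wrapped complex} together with the strictness of $\rho_{\alpha\beta}^\gamma$ — that the glued chain complex and the glued $A_\infty$-products genuinely assemble into a model quasi-isomorphic to $\mathcal{W}(H)$ rather than merely to some $A_\infty$-subcategory of it, which is guaranteed by the bijection of generators established in the first paragraph.
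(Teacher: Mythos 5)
Your proposal is correct and takes essentially the same route as the paper: the paper's proof of the theorem is literally ``Follows from Lemma~\ref{lemma: global},'' supplemented by the clarifying paragraph before the proof, which uses Lemma~\ref{lemma: ideal} to handle the two consistency cases (all inputs in $\mathcal C_{\alpha\beta}$, or at least one input in $\mathcal J_{\alpha\beta}^\gamma$) exactly as you do. The only minor imprecision is your parenthetical about the degree-$0$ fixed point inside each pair of pants, which is only present when $\alpha=\beta$; this does not affect the argument.
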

\noindent To clarify the last statement in Theorem \ref{thm: decomposition}, any product $\mu^d(x_1,\ldots, x_d)$ vanishes unless $x_1,\ldots, x_d$ all live in a single pair of pants $P_{\alpha\beta\gamma}$ (by Lemma \ref{lemma: global}), in which case we take the product inside $P_{\alpha\beta\gamma}$.  If in fact the generators $x_1,\ldots, x_d$ all live inside the same bounded cylinder $\mathcal C_{\alpha\beta}$, then so do the discs with these inputs (by the preceding arguments in Lemma \ref{lemma: ideal}), and the calculations $A_\infty$-products in $P_{\alpha\beta\gamma}$ and $P_{\alpha\beta\eta}$ give the same answer, i.e. the product is compatible with the equivalence relation. On the other hand, if at least one of the given inputs $x_1,\ldots, x_d$ lives in $\mathcal J_{\alpha\beta}^\gamma$, then so does the generators appearing in the output by Lemma \ref{lemma: ideal}, and so there is no question of compatibility with the equivalence relation.

\begin{proof} 
Follows from Lemma \ref{lemma: global}.
\end{proof}

\begin{figure}[h]
\centering
	\scalebox{1}{\includegraphics{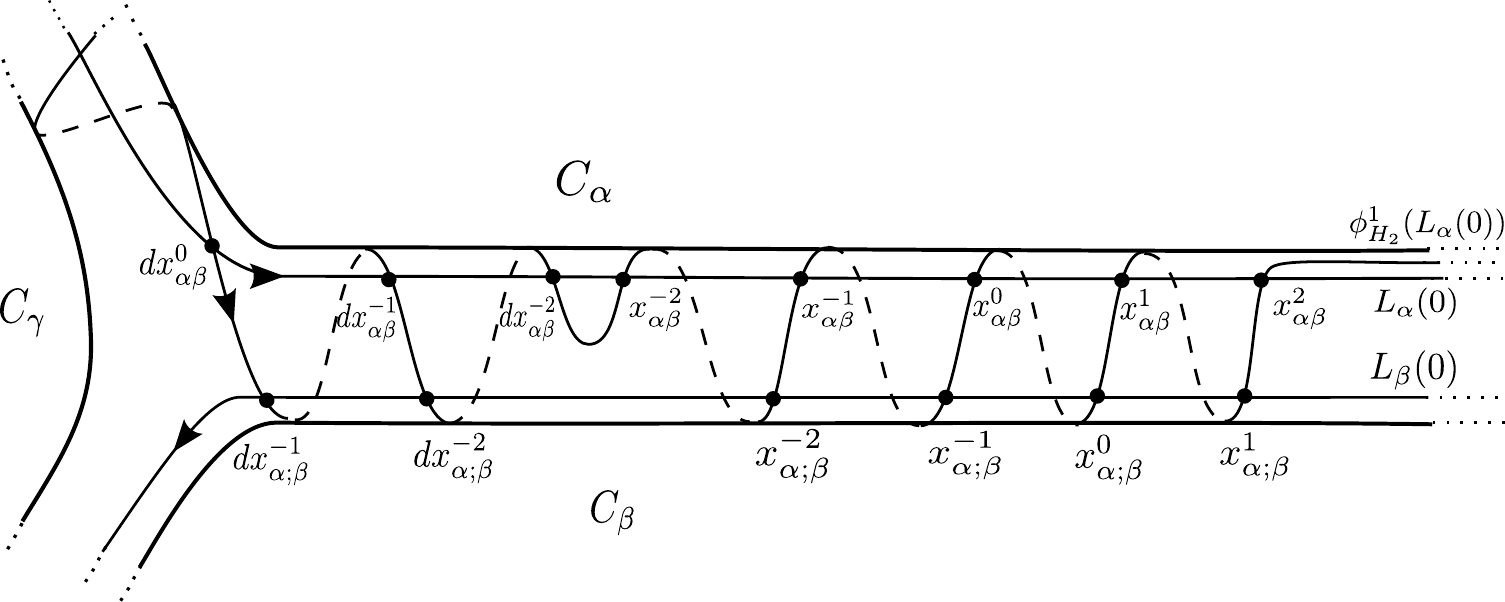}}
	\caption{Generators in $\mathcal C_{\alpha\beta}(L_{\alpha}(0), L_{\alpha}(0); H_2)$ and $\mathcal C_{\alpha\beta}(L_{\alpha}(0), L_{\beta}(0); H_2)$; assuming $d_{\alpha,\beta}=-1$.}
	\label{fig: pantsMorphism}
\end{figure}

We label the $2n+n_{\alpha\beta}(l-k)+1$ generators in $\mathcal C_{\alpha\beta}(L_\alpha(k), L_\alpha(l); H_n)$ by 
\[
x_{\alpha\beta}^{-n},\ldots,x_{\alpha\beta}^{-1}, \ x_{\alpha\beta}^0,\  x_{\alpha\beta}^1,\ldots,x_{\alpha\beta}^{n+n_{\alpha\beta}(l-k)}, 
\]
successively with the minimum of the Hamiltonian labeled as $x_{\alpha\beta}^0$ as illustrated in Figure \ref{fig: pantsMorphism}.   Each Floer complex $CW_{P_{\alpha\beta\gamma}}^*(L_\alpha(k), L_{\beta}(l))$ is a $CW_{P_{\alpha\beta\gamma}}^*(L_{\alpha}(k), L_{\alpha}(k))-CW_{P_{\alpha\beta\gamma}}^*(L_{\beta}(l), L_{\beta}(l))$-bimodule.  We label the $2n+n_{\alpha\beta}(l-k)+d_{\alpha,\beta}+1$ generators in $\mathcal C_{\alpha\beta}(L_\alpha(k), L_\beta(l); H_n)$ by 
\[
x_{\alpha; \beta}^{-n}, \ldots, x_{\alpha;\beta}^{-1}, \ x_{\alpha;\beta}^0, x_{\alpha; \beta}^1, \ldots, x_{\alpha;\beta}^{n+n_{\alpha\beta}(l-k)+d_{\alpha,\beta}}.
\]

The object $L_\alpha(k)$ is equivalent to $L_{\alpha}(l)$ in the category $\mathcal W(P_{\alpha\beta\gamma})$, and similarly in $\mathcal W(P_{\alpha\beta\eta})$.  However, they can be distinguished as the object for which the generator $x_{\alpha\beta}^i \in  \mathcal C_{\alpha\beta}(L_{\alpha}(k), L_\alpha(l))$ in the image of the restriction functor $\rho_{\alpha\beta}^\gamma$ is identified with the generator $\tilde x_{\alpha\beta}^{n_{\alpha\beta}(l-k)-i} \in \mathcal C_{\alpha\beta}(L_{\alpha}(k), L_\alpha(l))$ in the image of $\rho_{\alpha\beta}^\eta$. In addition, $x_{\alpha;\beta}^i$ is identified with $\tilde x_{\alpha;\beta}^{n_{\alpha\beta}(l-k)+d_{\alpha,\beta}-i}$.  

\begin{remark} The techniques presented in this section, especially those in the proof of Lemmas \ref{lemma: ideal} and \ref{lemma: global}, can be applied in more general situations.  Suppose we have a decomposition of a surface $\Sigma=P\cup Q$ into two components such that $P$ and $Q$ overlaps in a cylinder $C$, and let $e\subset \Sigma$ be the largest cylinder containing $C$ and we think of $e$ as a circle fibration over a line segment that is the base.  We can applied the above techniques to compute the Fukaya category of $\Sigma$ (or the wrapped Fukaya category if $\Sigma$ is open) when we have a choice of split-generating Lagrangians $(L_i)_{i\in I}$ such that each $L_i$ satisfies the following conditions.
\begin{itemize}
\item The intersection $L_i\cap e$ is a union of disjoint arcs such that each arc intersects each circle fiber of $e$ just once.

\item  For any two of the above arcs, the portion of $L_i$ in the complement of $e$ that is connected by these two arcs cannot be homotopically trivial. 

\end{itemize}

\end{remark}

\section{The Landau-Ginzburg mirror}

\subsection{Generating objects.} 

We consider the Landau-Ginzburg mirror $(Y,W)$ mentioned at the end of Section \ref{sec: tropical}.  The category of matrix factorizations $MF(Y,W)$ is defined to be the Verdier quotient of $MF^{naive}(Y,W)$ by the subcategory $Ac(Y,W)$ of acyclic elements \cite{AAEKO13, LP11, Or11}.  The objects of $MF^{naive}(Y,W)$ are 
\[
T:=
\xymatrix{
T_1\ar@<.25pc>[r]^-{t_1} & T_0\ar@<.25pc>[l]^-{t_0}
},
\]
where $T_1, T_0$ are locally free sheaves of finite rank on $Y$, and $t_1, t_0$ are morphisms satisfying $t_{i+1}\circ t_i=W\cdot id_{T_i}$.  The morphism complex
\[
\sHom({S}, {T})=\bigoplus_{i,j} \sHom(S_i,T_j)
\] 
is graded by $(i+j)\text{mod } 2$,  that is
\[\sHom^{even}({S}, {T})=\sHom(S_0,T_0) \oplus \sHom(S_1,T_1),\]
\[ \sHom^{odd}({S}, {T})=\sHom(S_0,T_1) \oplus \sHom(S_1,T_0).\]
The differential on this complex is $d:f\mapsto t\circ f-(-1)^{|f|} f\circ s$.  The equivalence $MF(Y,W)\stackrel{\sim}{\rightarrow} D^b_{sg}(D)$ is given by $T\mapsto \Coker(t_1)$, which is a sheaf on $D=W^{-1}(0)$ because it is annihilated by $W$.  

For each irreducible divisor $D_{\alpha}=\{t_{\alpha;1} =0 \}$, $t_{\alpha;1}:\Os(-D_\alpha) \to \Os$ is an injective sheaf homomorphism.  It also induces a map $t_{\alpha;1}:\Os(-D_{\alpha})(k)\to \Os(k)$ for any $k\in \mathbb Z$ (with the notation $\Os(k):=\Os(1)^{\otimes k}$, where $\Os(1)$ is the polarization (ample line bundle) on $Y$ determined by the polytope $\Delta_Y$).   For each $\alpha \in A, k\in \mathbb Z$, we consider
\begin{equation}
{T_{\alpha}(k)}:=
\xymatrix{
{\Os(-D_\alpha)(k)}\ar@<.25pc>[r]^-{t_{\alpha; 1}} & {\Os(k) }\ar@<.25pc>[l]^-{t_{\alpha; 0}}
}.
\end{equation}
in $MF(Y, W)$.  The object ${T_{\alpha}(k)}$ in $D^b_{sg}(D)$ is  $\Coker (t_{\alpha;1})=\Os_{D_\alpha}(k)$. An argument similar to that in Section 6 of \cite{AAEKO13} implies that:

\begin{lemma} $D^b_{sg}(D)$ is split-generated by objects $\Os_{D_\alpha}(k), k\in \mathbb Z, \alpha\in A$.
\end{lemma}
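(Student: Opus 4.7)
The plan is to adapt the generation argument from Section 6 of \cite{AAEKO13}. Since $D^b_{sg}(D)$ is the Verdier quotient of $D^b(\Coh(D))$ by $\mathfrak{Perf}(D)$, it suffices to show that every coherent sheaf on $D$ lies, modulo perfect complexes, in the split-closed triangulated subcategory generated by the objects $\Os_{D_\alpha}(k)$, $\alpha \in A$, $k \in \mathbb{Z}$.

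The first step I would carry out is devissage by irreducible components. Standard devissage for coherent sheaves on a Noetherian scheme yields a finite filtration of any $\mathcal{F} \in \Coh(D)$ whose successive quotients are of the form $i_{Z*}\mathcal{G}$ for an integral closed subscheme $Z \subset D$ and a coherent sheaf $\mathcal{G}$ on $Z$. Since each such $Z$ is contained in some irreducible component $D_\alpha$, this reduces the problem to showing that for every $\alpha$ and every coherent $\mathcal{G}$ on $D_\alpha$, the pushforward $i_{\alpha*}\mathcal{G}$ lies in the split-closure of $\{\Os_{D_\alpha}(k)\}$ inside $D^b_{sg}(D)$.

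The second step exploits the toric structure of each $D_\alpha$, which is a smooth toric surface with polarization $\Os_{D_\alpha}(1) = i_\alpha^*\Os_Y(1)$. For compact $D_\alpha$, Serre's theorem provides, for every coherent $\mathcal{G}$ on $D_\alpha$, a left resolution by sums of negative twists $\bigoplus_i \Os_{D_\alpha}(-k_i)$ with $k_i \gg 0$; the eventual syzygies are locally free on the smooth surface $D_\alpha$, and their pushforwards to $D$ are summands of perfect complexes, hence trivial in $D^b_{sg}(D)$. When $D_\alpha$ is non-compact (corresponding to an unbounded cell $C_\alpha$), I would instead invoke the full exceptional collection of line bundles on smooth toric varieties (Kawamata; Hille--Perling) to build any coherent sheaf as a finite iterated extension of line-bundle twists, and then relate arbitrary toric line bundles on $D_\alpha$ to the single-parameter family $\{\Os_{D_\alpha}(k)\}$ up to differences that become perfect on $D$ after twisting.

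The main technical obstacle is ensuring that the restricted family $\{\Os_{D_\alpha}(k)\}_{k \in \mathbb{Z}}$ --- a one-parameter subgroup of $\mathrm{Pic}(D_\alpha)$ coming from the polarization --- is already sufficient for split-generation modulo $\mathfrak{Perf}(D)$, particularly on the non-compact components where the Picard group is large and Serre-style projective generation does not directly apply. As in \cite{AAEKO13}, this is handled by toric bookkeeping that identifies the discrepancy between arbitrary toric line bundles and iterated twists of the polarization with locally free sheaves pushed forward from $D_\alpha$, which represent perfect complexes on $D$ and therefore vanish in the singularity category.
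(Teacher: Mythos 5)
Your devissage step is sound: reducing to pushforwards $i_{\alpha*}\mathcal{G}$ of coherent sheaves from the irreducible components is a valid alternative to the paper's reduction to the subcategories of complexes with cohomology supported on $D_\alpha$.

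The second step, however, contains a genuine gap. You claim that the pushforward to $D$ of a locally free sheaf $\mathcal{E}$ on $D_\alpha$ (the eventual syzygy) is a summand of a perfect complex and hence vanishes in $D^b_{sg}(D)$. That is false: the embedding $i_\alpha: D_\alpha \hookrightarrow D$ is not a regular closed embedding — locally near a generic point of $D_{\alpha\beta}$ it is $\{x=0\}\subset\{xy=0\}\subset \mathbb{C}^2$, and $x$ is a zero-divisor in $\Os_D=\mathbb{C}[x,y]/(xy)$ — so $i_{\alpha*}$ does not preserve perfectness. Indeed, if your claim held, applying it to $\mathcal{E}=\Os_{D_\alpha}(k)$ would make the proposed generators themselves perfect, hence zero in $D^b_{sg}(D)$, collapsing the statement to be proved. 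The same circularity infects your non-compact case, where you again appeal to ``differences that become perfect on $D$ after twisting.'' (Separately, the Kawamata and Hille--Perling exceptional-collection theorems concern complete smooth toric varieties, so they do not literally apply to the non-compact $D_\alpha$.)

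The correct replacement — and what the paper actually does — is Orlov's Theorem 4 of [Or09]: on a quasi-projective scheme of finite Krull dimension, powers of an ample line bundle split-generate $\mathfrak{Perf}$. Since each $D_\alpha$ is a smooth (possibly non-compact) toric surface, $\mathfrak{Perf}(D_\alpha)=D^b\Coh(D_\alpha)$, so $\{\Os_{D_\alpha}(k)\}_{k\in\mathbb Z}$ split-generates $D^b\Coh(D_\alpha)$. Because $i_{\alpha*}$ is exact, it carries this split-generation statement from $D^b\Coh(D_\alpha)$ into $D^b\Coh(D)$, placing every $i_{\alpha*}\mathcal{G}$ in the split-closure $\cT$ of the generators. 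Combined with your devissage this finishes the proof — and in fact proves the stronger statement that $\cT=D^b\Coh(D)$, not merely $D^b_{sg}(D)$ — uniformly for compact and non-compact components. The syzygy is never discarded as perfect on $D$; it is generated by the same line bundles already on $D_\alpha$.
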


\begin{proof}
We imitate the proof of Lemma 6.3 in \cite{AAEKO13}.  Because $D^b_{sg}(D)$ is a quotient of $D^b\Coh(D)$, it is sufficient to show that $\Os_{D_\alpha}(k)$'s generate the category $D^b\Coh(D)$.  Denote by  $\cT \subset D^b\Coh(D)$ the full triangulated subcategory split-generated by these objects.  

Since the divisors $D_\alpha$ are precisely the irreducible components of $D$, it is sufficient to prove that that $D^b_{D_\alpha}\Coh(D)\subset \cT$ for all $\alpha\in A$, where $D^b_{D_\alpha}\Coh(D)$ is the full subcategory consisting of complexes with cohomology supported on $D_\alpha$.  According to Theorem 4 or \cite{Or09}, the $\Os_{D_\alpha}(k)$'s, i.e. powers of an ample line bundle, split-generate the subcategory of the perfect complexes $\mathfrak{Perf}(D_\alpha)$, which is equal to $D^b\Coh(D_\alpha)$ since $D_\alpha$ is smooth.   The restriction map from a complex whose cohomology is supported on $D_\alpha$ to the complex that is actually supported on $D_\alpha$ is a quasi-isomorphism, hence it's an isomorphism in the derived category.  So, $\Os_{D_\alpha}(k)$'s split-generate $D^b\Coh(D_\alpha)$ implies that they also split-generate $D^b_{D_\alpha}\Coh(D)$.
\end{proof}

\subsection{\v{C}ech model and  homotopic restriction functors}\label{sec: cech}

Let 
\[\mathfrak U=\{U_{\alpha\beta\gamma}=\mathbb C[x_{\alpha\beta}, x_{\alpha\gamma}, x_{\beta\gamma}], W=x_{\alpha\beta}x_{\alpha\gamma}x_{\beta\gamma}\}_{\alpha,\beta,\gamma\in A \text{ adjacent }}\]
be a finite covering of $(Y,W)$ by affine toric subsets. The moment polytope of each $U_{\alpha\beta\gamma}$ corresponds to a corner of $\Delta_Y$ coming from $C_\alpha, C_\beta, C_\gamma$.  The divisor $D$ restricted to $U_{\alpha\beta\gamma}$ is equal to the restriction $(D_{\alpha}\cup D_{\beta}\cup D_{\gamma})|_{U_{\alpha\beta\gamma}}$, with $D_{\alpha}|_{U_{\alpha\beta\gamma}}=\{x_{\beta\gamma}=0\}$, $D_{\beta}|_{U_{\alpha\beta\gamma}}=\{x_{\alpha\gamma}=0\}$, $D_{\gamma}|_{U_{\alpha\beta\gamma}}=\{x_{\alpha\beta}=0\}$, and $W$ is locally given by the product of these affine coordinates.  The object ${T_{\alpha}(k)}$ restricted to this local chart is 
\[{T_{\alpha}(k)}(U_{\alpha\beta\gamma})=\xymatrix{
{\Os(-D_\alpha)(k)(U_{\alpha\beta\gamma})}\ar@<.25pc>[r]^-{x_{\beta\gamma}} & 
{\Os(k) (U_{\alpha\beta\gamma})}\ar@<.25pc>[l]^-{x_{\alpha\beta}x_{\alpha\gamma}}} ,\]
and similarly for ${T_{\beta}(k)}(U_{\alpha\beta\gamma})$ and ${T_{\gamma}(k)}(U_{\alpha\beta\gamma})$.  Note that $T_{\alpha}(k)(U_{\alpha\beta\gamma})$ is actually equivalent to $T_{\alpha}(0)(U_{\alpha\beta\gamma})$ in the category of matrix factorizations on the local chart $U_{\alpha\beta\gamma}$.  

For each coordinate, there is a restriction functor, simply to the complement of the coordinate plane $\{x_{\alpha\beta}=0\}$,
\[
\sigma_{\alpha\beta}^\gamma: MF \left(U_{\alpha\beta\gamma}, x_{\alpha\beta}x_{\alpha\gamma}x_{\beta\gamma}\right) \to MF\left( \mathbb C^*[x_{\alpha\beta}]\times \mathbb C[x_{\alpha\gamma},x_{\beta\gamma}], x_{\alpha\beta}x_{\alpha\gamma}x_{\beta\gamma}\right)\cong D^b(\Coh(\mathbb C^*[x_{\alpha\beta}])),
\]
where $\mathbb C^*[x_{\alpha\beta}]$ is the $\mathbb C^*$ with ring of functions $\mathbb C[x_{\alpha\beta}^{\pm}]$.
The equivalence   
\[
MF\left( \mathbb C^*[x_{\alpha\beta}]\times \mathbb C[x_{\alpha\gamma},x_{\beta\gamma}], x_{\alpha\beta}x_{\alpha\gamma}x_{\beta\gamma}\right)\cong D^b(\Coh(\mathbb C^*[x_{\alpha\beta}]))
\]
is due to the Kn\"{o}rrer periodicity theorem \cite{Or04, Kn87}.  In this section, we will check the commutativity of the following diagram up to the first order

\begin{equation}\label{diagram}
\xymatrix@R2pc@C3pc{
\mathcal A:=\mathcal W \left(\coprod P_{\alpha\beta\gamma}\right) \ar[r]^\rho \ar[d]_{\cQ_a} & \mathcal B:=\mathcal W \left(\coprod \mathcal C_{\alpha\beta}\right) \ar[d]_{\cong}\\
\mathcal A':=MF \left(\coprod (U_{\alpha\beta\gamma}, x_{\alpha\beta}x_{\alpha\gamma}x_{\beta\gamma})\right) \ar[r]^{\ \ \sigma} & \mathcal B':=D^b(\Coh(\coprod\mathbb C^*[x_{\alpha\beta}])).
}
\end{equation}

In this diagram, $\rho$ is the restriction functor from Section \ref{section: PPD}.  Due to HMS for pairs of pants (as shown in \cite{AAEKO13}), the category $\mathcal A$ is quasi-equivalent to $\mathcal A'$ via the $A_\infty$-functor $\cQ_a$.  The categories $\mathcal B$ and $\mathcal B'$ are isomorphic due to HMS for cylinders---that their underlying morphism spaces are isomorphic and, for both of them, the higher products on the morphism spaces are all identically zero (see e.g. \cite{Au13}).   So,  we can think of $\cB$ and $\cB'$ as the same category for convenience and denote it by $\cB$.  We want to show that the two $A_\infty$-restriction functors 
\begin{equation}
\cF:= \rho,\  \ \ \cG :=\sigma\circ \cQ_a: \ \ \mathcal A \to \mathcal B
\end{equation}
are equal up to the first order.

The following diagram shows the morphisms $(f_1,f_0)\in \sHom^0(T_{\alpha}(k)(U_{\alpha\beta\gamma}), T_{\alpha}(l)(U_{\alpha\beta\gamma}))$ and $(h_0,h_1)\in \sHom^1(T_{\alpha}(k)(U_{\alpha\beta\gamma}), T_{\alpha}(l)(U_{\alpha\beta\gamma}))$,

\[\xymatrix@R3pc@C3pc{
{\Os(-D_\alpha)(k)(U_{\alpha\beta\gamma})}\ar@<.4pc>[r]^-{x_{\beta\gamma}} \ar[d]_{f_1} \ar[rd]^{\ \ h_0}& 
{\Os(k) (U_{\alpha\beta\gamma})}\ar@<-.1pc>[l]^-{x_{\alpha\beta}x_{\alpha\gamma}} \ar[d]_{f_0} \ar[ld]^{\ \ h_1}\\
{\Os(-D_\alpha)(l)(U_{\alpha\beta\gamma})}\ar[r]^-{x_{\beta\gamma}}&
{\Os(l) (U_{\alpha\beta\gamma}) }\ar@<.3pc>[l]^-{x_{\alpha\beta}x_{\alpha\gamma}}.
}\]
The differential maps are
\[(f_0, f_1)\mapsto (x_{\alpha\beta}x_{\alpha\gamma}(f_0-f_1), x_{\beta\gamma}(f_1-f_0)),\] 
\[(h_0, h_1)\mapsto (x_{\alpha\beta}x_{\alpha\gamma}h_1+x_{\beta\gamma}h_0, x_{\alpha\beta}x_{\alpha\gamma}h_1+x_{\beta\gamma}h_0).\]  
Hence in cohomology
\[\Hom^0(T_{\alpha}(k)(U_{\alpha\beta\gamma}), T_{\alpha}(l)(U_{\alpha\beta\gamma}))\cong \Os(l-k)(U_{\alpha\beta\gamma})/(x_{\beta\gamma}, x_{\alpha\beta}x_{\alpha\gamma}),\]
\[\Hom^1(T_{\alpha}(k)(U_{\alpha\beta\gamma}), T_{\alpha}(l)(U_{\alpha\beta\gamma}))=0.\]
Restricting via $\sigma_{\alpha\beta}^\gamma$ gives 
\[\Hom^0(T_{\alpha}(k)(U_{\alpha\beta\gamma}), T_{\alpha}(l)(U_{\alpha\beta\gamma}))\cong \Os(l-k)(U_{\alpha\beta\gamma})/(x_{\beta\gamma}, x_{\alpha\gamma})\cong \Os_{D_{\alpha\beta}|U{\alpha\beta\gamma}}(l-k),\]
where $D_{\alpha\beta}|U_{\alpha\beta\gamma}=(D_{\alpha}\cap D_{\beta})|U_{\alpha\beta\gamma}\cong \mathbb C^*[x_{\alpha\beta}]$.  
Hence $T_\alpha(k)(U_{\alpha\beta\gamma})$, $T_\alpha(l)(U_{\alpha\beta\gamma})$ and $T_\alpha(k)(U_{\alpha\beta\eta})$,  $T_\alpha(l)(U_{\alpha\beta\eta})$ are respective objects in $MF(U_{\alpha\beta\gamma}, x_{\alpha\beta}x_{\gamma\beta}x_{\alpha\gamma})$ and $MF(U_{\alpha\beta\eta}, x_{\alpha\beta}x_{\eta\beta}x_{\alpha\eta})$ for which the generator $x_{\alpha\beta}^i\in \Os_{D_{\alpha\beta}|U{\alpha\beta\gamma}}(l-k)$ in the image of the restriction function $\sigma_{\alpha\beta}^\gamma$ is identified with $\tilde x_{\alpha\beta}^{n_{\alpha\beta}(l-k)-i}\in \Os_{D_{\alpha\beta}|U{\alpha\beta\eta}}(l-k)$ in the image of $\sigma_{\alpha\beta}^\eta$.  
Indeed, the restriction of $\Os(1)$ to $D_{\alpha\beta}$ has degree given by the length of the corresponding edge of $\Delta_Y$, i.e. $n_{\alpha\beta}$, so $\Os_{D_{\alpha\beta}}(l-k)$ has degree $n_{\alpha\beta}(l-k)$.  

A similar calculation can be carried out for $\Hom(T_{\alpha}(k)(U_{\alpha\beta\gamma}), T_\beta(l)(U_{\alpha\beta\gamma}))$.  For morphisms $(f_1,f_0)\in \sHom^0(T_{\alpha}(k)(U_{\alpha\beta\gamma}), T_{\beta}(l)(U_{\alpha\beta\gamma}))$ and $(h_0,h_1)\in \sHom^1(T_{\alpha}(k)(U_{\alpha\beta\gamma}), T_{\beta}(l)(U_{\alpha\beta\gamma}))$,
\[\xymatrix@R3pc@C3pc{
{\Os(-D_\alpha)(k)(U_{\alpha\beta\gamma})}\ar@<.4pc>[r]^-{x_{\beta\gamma}} \ar[d]_{f_1} \ar[rd]^{\ \ h_0}& 
{\Os(k) (U_{\alpha\beta\gamma})}\ar@<-.1pc>[l]^-{x_{\alpha\beta}x_{\alpha\gamma}} \ar[d]_{f_0} \ar[ld]^{\ \ h_1}\\
{\Os(-D_\beta)(l)(U_{\alpha\beta\gamma})}\ar[r]^-{x_{\alpha\gamma}}&
{\Os(l) (U_{\alpha\beta\gamma}) }\ar@<.3pc>[l]^-{x_{\alpha\beta}x_{\beta\gamma}},
}\] 
the differential maps 
\[(f_0, f_1)\mapsto (x_{\alpha\beta}x_{\beta\gamma}f_0- x_{\alpha\beta}x_{\alpha\gamma}f_1,  x_{\alpha\gamma}f_1-x_{\beta\gamma}f_0),\] 
\[(h_0, h_1)\mapsto (x_{\alpha\gamma}h_0+x_{\alpha\beta}x_{\alpha\gamma}h_1, x_{\alpha\beta}x_{\beta\gamma}h_1+x_{\beta\gamma}h_0).\]  
Hence in cohomology
\[\Hom^0(T_{\alpha}(k)(U_{\alpha\beta\gamma}), T_{\beta}(l)(U_{\alpha\beta\gamma}))=0.\]
\[\Hom^1(T_{\alpha}(k)(U_{\alpha\beta\gamma}), T_{\beta}(l)(U_{\alpha\beta\gamma}))\cong \Os(D_{\alpha})(l-k)(U_{\alpha\beta\gamma})/(x_{\alpha\gamma}, x_{\beta\gamma}, x_{\alpha\beta}x_{\alpha\gamma}, x_{\alpha\beta}x_{\beta\gamma}),\]
Restricting via $\sigma_{\alpha\beta}^\gamma$ gives 
\[\Hom^1(T_{\alpha}(k)(U_{\alpha\beta\gamma}), T_{\beta}(l)(U_{\alpha\beta\gamma}))\cong \Os(D_{\alpha})(l-k)(U_{\alpha\beta\gamma})/(x_{\alpha\gamma},x_{\beta\gamma})\cong \Os_{D_{\alpha\beta}|U_{\alpha\beta\gamma}}(D_{\alpha})(l-k),\]
The restriction functors from adjacent affine charts to their common overlap now identify $x_{\alpha;\beta}^i$ and $\tilde x_{\alpha;\beta}^{n_{\alpha\beta}+d_{\alpha,\beta}-i}$, which are generators whose degrees add up to the degree of $\Os(D_{\alpha})(l-k)_{|D_{\alpha\beta}}$, namely $n_{\alpha\beta}(l-k)+d_{\alpha,\beta}$.  

This matches with the behavior described at the end of Section \ref{sec: wFk} for the restriction functors in Floer theory, via the natural identification between cohomology-level morphisms on two sides of mirror symmetry as suggested by our notations.  Hence the functors $\mathcal F$ and $\mathcal G$ agree on cohomology as claimed.

Below we show that $\cF$ and $\cG$ are homotopic by following Sections (1d)-(1h) of \cite{Se08}.  
The $A_\infty$-functors $\cF$ and $\cG$ are themselves objects of an $A_\infty$-category $fun(\cA, \cB)$.  For $T\in \sHom_{fun(\cA,\cB)}^g(\cF,\cG)$, the differential is defined to be
\begin{equation}\label{eqn: fun}
\begin{array}{l}
\mu^1_{\nufun(\cA, \cB)}(T)^r(a_r,\ldots, a_1) \vspace{0.1in} \\

 \hspace{0.95in} ={\sum\limits_{1\leq i\leq j}} \ \ {\sum\limits_{s_1+\cdots +s_j=r}}
\mu^j_\cA  \Big(\cG^{s_j}(a_r,\ldots, a_{d-s_j+1}),\ldots, \cG^{s_{i+1}}(\ldots, a_{s_1+\cdots+s_{i}+1}),\vspace{-0.05in} \\
  \hspace{2.6in} T^{s_i}(a_{s_1+\cdots+s_i},\ldots, a_{s_1+\cdots+s_{i-1}+1}),\\
 \hspace{2.7in}  \cF^{s_{i-1}}(a_{s_1+\cdots+s_{i-1}}, \ldots),\ldots, \cF^{s_1}(a_{s_1},\ldots,a_1) \Big)\\

\hspace{1.2in} -{\sum\limits_{m,n}} \ \ T^{r-m+1}  \Big(a_d,\ldots, a_{n+m+1}, \mu_\cA^m(a_{n+m},\ldots, a_{n+1}),a_n, \ldots, a_1\Big).

\end{array}
\end{equation}   
The difference $D=\cF-\cG\in \sHom^1_{fun(\cA,\cB)}(\cF, \cG)$  is an actual natural transformation,  $\mu^1_{fun(\cA, \cB)}(D)=0$.  Note that our $\cF$ and $\cG$ act in the same way on objects.   By definition, two functors $\cF$ and $\cG$ are homotopic if $D=\mu^1_{fun(\cA, \cB)}(T)$ for  some $T\in \sHom^0_{fun(\cA,\cB)}(\cF,\cG)$ with $T^0=0$.   The pair of functors  $(\cF,\cG)$ makes the category $\mathcal B$ a $A_\infty$-bimodule over $\mathcal A$ by acting through $\cG$ on the left and $\cF$ on the right.  With this bimodule structure, the Hochschild complex $CC^1(\cA,\cB)$ is the collection of all maps
\begin{equation}\label{eq: CC1}
\sHom_\cA(X_{r-1},X_r)\otimes\cdots \otimes \sHom_\cA(X_0, X_1)\to \sHom_\cB(\cF X_0, \cG X_r)[1-r],
\end{equation}
for all $r$, with the differential being $\mu^1_{fun(\cA, \cB)}$.  So the Hochschild cohomology $HH^1(\cA, \cB)$ determines the classification of $A_\infty$-functors from $\mathcal A$ to $\mathcal B$.

Let $A=H(\cA)$ and $B=H(\cB)=\cB$ be the underlying cohomological categories, and let $F=H(\cF)$ and $G=H(\cG)$.   There is a complete decreasing filtration $F^*$ of $\sHom_{fun(\cA, \cB)}(\cF,\cG)$ by the length, i.e. the $F^r$ term consists of $T\in \sHom_{fun(\cA, \cB)}(\cF,\cG)$ such that $T^0=\cdots =T^{r-1}=0$.  This induces a spectral sequence with $E_1^{rs}=CC^{r+s}(A,B)^s$ and its $\partial$-cohomology is 
$HH^{r+s}(A, B)^s=E_2^{r,s}$, which, if converges, goes to  $HH^*(\cA, \cB)$.   For morphisms $a_1,\ldots, a_r$ in $A$, the Hochschild differential $\partial$ is given by 
\begin{equation}
(\partial t)(a_r,\ldots, a_1)=t(a_r,\ldots, a_2)F(a_1)+G(a_r)t(a_{r-1},\ldots, a_1)+\sum_n t(a_r,\ldots, a_{n+1}a_{n},\ldots, a_1).
\end{equation}
A Hochschild cohomology calculation similar to that in Lemma 3.2 of \cite{AAEKO13} shows that:
\begin{lemma}
The Hochschild cohomology $HH^1(A, B)^{1-r}=0$ for $r\geq 2$.
\end{lemma}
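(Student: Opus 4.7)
The proof is a Hochschild cohomology calculation modeled on Lemma 3.2 of \cite{AAEKO13}, where an analogous vanishing statement was proved in the course of establishing HMS for a single pair of pants. The plan is to localize the problem to a single pair of pants, present the relevant algebra and bimodule explicitly, and then verify the vanishing by a weight-grading argument on the bar complex.

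First I would exploit decompositions to reduce the calculation. The category $A=H(\cA)$ decomposes as $\bigoplus_{\alpha\beta\gamma}A_{\alpha\beta\gamma}$ with $A_{\alpha\beta\gamma}=H(\mathcal{W}(P_{\alpha\beta\gamma}))$, since there are no morphisms between Lagrangians in distinct pairs of pants; similarly $B$ decomposes over the cylinders $\mathcal{C}_{\alpha\beta}$. The $A$-bimodule structure on $B$ coming from $(\cF,\cG)$ is block-diagonal, with the block at $P_{\alpha\beta\gamma}$ receiving contributions only from its three cylindrical ends. Consequently $HH^*(A,B)$ splits as a sum of contributions from each pair of pants, and it suffices to prove the vanishing on each summand $(A_{\alpha\beta\gamma},B_{\alpha\beta\gamma})$ separately.

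Next I would present $A_{\alpha\beta\gamma}$ and the bimodule $B_{\alpha\beta\gamma}$ explicitly. By HMS for the pair of pants \cite{AAEKO13}, invoked through the quasi-equivalence $\cQ_a$, the algebra $A_{\alpha\beta\gamma}$ is identified with the cohomological endomorphism algebra of the objects $T_i(k)$, $i\in\{\alpha,\beta,\gamma\}$, $k\in\bZ$, inside $MF(U_{\alpha\beta\gamma},x_{\alpha\beta}x_{\alpha\gamma}x_{\beta\gamma})$; the relevant $\Hom$ groups have already been tabulated in the paragraphs above. The bimodule $B_{\alpha\beta\gamma}$ admits a parallel description via restriction to the three coordinate axes through Kn\"orrer periodicity. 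Both the algebra and the bimodule carry a natural weight grading (coming from the $\bC^*$-action on $U_{\alpha\beta\gamma}$ with $W$ in weight one, or equivalently from the twisting parameter $k$ on the symplectic side); every generator sits in non-negative weight and all structure maps preserve the weight. The superscript $(\cdot)^{1-r}$ on Hochschild cohomology refers to this weight grading.

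The final step is the Hochschild computation itself. Using the polynomial generators $x_{\alpha\beta},x_{\alpha\gamma},x_{\beta\gamma}$ (each of weight one), I would build a Koszul-type resolution of $B_{\alpha\beta\gamma}$ as an $A_{\alpha\beta\gamma}\otimes A_{\alpha\beta\gamma}^{op}$-module and then compute $HH^1(A_{\alpha\beta\gamma},B_{\alpha\beta\gamma})$ one weight piece at a time. Positivity of the grading on both the algebra and the bimodule constrains length-$r$ cochains of total weight $1-r$ very tightly, and the Koszul resolution identifies their cohomology with an explicit Tor group in a weight where the graded component vanishes for $r\geq 2$. The main obstacle I anticipate is bookkeeping: there are nine distinct $\Hom$-types $\Hom(T_i(k),T_j(l))$ for $i,j\in\{\alpha,\beta,\gamma\}$, each with its own module structure, and one must also verify that the chain-level compatibility between $\cF$ and $\cG$ interfaces correctly with the bimodule bar complex. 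Once the bookkeeping is arranged, the vanishing follows term by term in parallel with Lemma 3.2 of \cite{AAEKO13}.
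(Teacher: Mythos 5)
The central difficulty with your proposal is the identification of the superscript $(1-r)$. The paper defines this bigrading explicitly, immediately before the lemma: it is the bigrading arising from the \emph{length filtration} on $\sHom_{\nufun(\cA,\cB)}(\cF,\cG)$, with $HH^{1}(A,B)^{1-r}=E_2^{r,1-r}$ the $\partial$-cohomology at filtration step $r$ (i.e.\ the cohomology of the complex of length-$r$ Hochschild cochains in total degree $1$). Your claim that ``the superscript $(\cdot)^{1-r}$ refers to this weight grading'' coming from the $\bC^{*}$-action on $U_{\alpha\beta\gamma}$ is a misreading: a length-$r$ cochain $t\colon \Hom_A^{\otimes r}\to\Hom_B$ does not live in a single weight, and decomposing the Hochschild complex by weight gives an altogether different grading from the length filtration. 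The argument you sketch --- a Koszul resolution together with a positivity constraint on the weight --- would, if carried out, bound the weight-$(1-r)$ component of $HH^1$, not the length-$r$ component $E_2^{r,1-r}$ that the lemma and the subsequent inductive construction of the homotopy $T$ actually need. The positivity step itself is also shaky even in the weight-graded sense: with all generators in non-negative weight, a cochain whose output has strictly smaller weight than the sum of input weights (so negative ``weight of the map'') is by no means ruled out, so ``positivity constrains length-$r$ cochains of total weight $1-r$ very tightly'' needs a real argument, not just the sign of $1-r$.

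What is salvageable: the reduction to a single pair of pants by additivity of Hochschild cohomology, and passing to the matrix-factorization presentation of $A_{\alpha\beta\gamma}$ via $\cQ_a$ to get explicit polynomial models, are both reasonable opening moves and are broadly in the spirit of the cited Lemma 3.2 of \cite{AAEKO13} (the paper itself offers no more detail than the citation). But before any Koszul computation can be compared to the target, you would need to set up the calculation so that it computes $E_2^{r,1-r}$ for the length filtration --- equivalently, the bar-complex cohomology of the cohomology-level bimodule $B$ over $A$ restricted to length-$r$ cochains of total degree $1$ --- and the weight grading can at most serve as an auxiliary decomposition inside that computation, not as the definition of the superscript.
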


To show that $\cF$ and $\cG$ are homotopic, the argument is somewhat similar to Lemma 1.9 of \cite{Se08}. Assume we have built a homotopy $T$ up to a certain order, i.e. suppose we  have constructed $T^0=0, T^1, \ldots, T^{r-1}$ so that  $D=\mu^1_{fun(\cA,\cB)}(T)$ up to order $r-1$.  Note that we can certainly do so for $r=2$ since $\cF^1$ and $\cG^1$ agree in cohomology.  For the induction step, we want to be able to construct $T^r$.  For an arbitrary $T^r$, the order $r$ terms of the difference $D-\mu^1_{fun(\cA,\cB)}(T)$ induce a map $h$ on cohomology, and that $h$ is a cocycle in $CC^1(A,B)^{1-r}$.  Because $HH^1(A,B)^{1-r}$ vanishes for any $r\geq 2$, we have that $h=\partial v$ is exact.  Using a chain level representative of $v$, we can adjust $T^{r-1}$ so that  $D-\mu^1_{fun(\cA,\cB)}(T)$ vanishes at the $r$th order on the cohomology level.  Then we can choose a $r$th order homotopy $T^r$ so that $D=\mu^1_{fun(\cA,\cB)}(T)$ holds on the chain level.   Hence:

\begin{corollary}
Two $A_\infty$-functors from $\mathcal A$ to $\mathcal B$ which agree to first order in cohomology are homotopic, so $\cF$ and $\cG$ are homotopic.
\end{corollary}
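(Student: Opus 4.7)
The plan is to construct the desired homotopy $T \in \sHom^0_{\nufun(\cA,\cB)}(\cF,\cG)$ with $T^0 = 0$ and $\mu^1_{\nufun(\cA,\cB)}(T) = D := \cF - \cG$ by building its multilinear components $T^r$ one length at a time, with the obstruction to each extension being a class in $HH^1(A,B)^{1-r}$. Since the preceding lemma shows these groups vanish for $r \geq 2$, every obstruction is killable, and the induction proceeds without essential difficulty. This is the standard obstruction-theory pattern for $A_\infty$-functors sketched in Section (1h) of \cite{Se08}, which I would specialize to the pair $(\cF, \cG)$ at hand.

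For the base case $r = 1$, the hypothesis that $\cF$ and $\cG$ induce the same map on cohomology produces, for each morphism $a_1 \in \sHom_\cA(X_0,X_1)$, a chain-level element $T^1(a_1) \in \sHom_\cB(\cF X_0, \cG X_1)$ with $\mu^1_\cB(T^1(a_1)) \pm T^1(\mu^1_\cA(a_1)) = \cF^1(a_1) - \cG^1(a_1)$, which is precisely the order-$1$ instance of $\mu^1_{\nufun(\cA,\cB)}(T) = D$ read off from \eqref{eqn: fun}. Compatibility under compositions is not required at this stage because only the linear part enters.

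For the inductive step, assume $T^0 = 0, T^1, \ldots, T^{r-1}$ have been arranged so that $D - \mu^1_{\nufun(\cA,\cB)}(T)$ vanishes in orders $< r$, and consider its order-$r$ component, viewed as a cochain in $CC^1(A,B)^{1-r}$. The $A_\infty$-relations for $\cF$ and $\cG$, the identity $\mu^1_{\nufun(\cA,\cB)}(D) = 0$ (expressing that $D$ is a genuine natural transformation), and the previously arranged lower-order identities force this order-$r$ component to be a cocycle $h$ for the Hochschild differential $\partial$ defined just before the lemma. The vanishing $HH^1(A,B)^{1-r} = 0$ then produces $v \in CC^0(A,B)^{1-r}$ with $\partial v = h$. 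Lifting $v$ to a chain-level multilinear map and using it as a correction to the candidate $T^r$ makes the order-$r$ equation hold on cohomology, after which a final adjustment by a $\mu^1_\cB$-coboundary turns this into a chain-level equality, completing the order-$r$ step.

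The step I expect to require the most care is verifying that the order-$r$ defect really is a Hochschild cocycle in the sense of the formula preceding the lemma, and that the successive corrections do not disturb identities already arranged at lower orders. This is a direct but bookkeeping-heavy consequence of expanding $\mu^1_{\nufun(\cA,\cB)} \circ \mu^1_{\nufun(\cA,\cB)} = 0$ applied to $T$ and matching it with $\partial$; the fact that $\cF$ and $\cG$ act identically on objects, so that $T$ can be taken to vanish in arity $0$, removes one of the usual complications. Once this bookkeeping is in place, iterating the induction yields a full homotopy $T$ and hence the homotopy equivalence $\cF \simeq \cG$, which, combined with the identifications of categories preceding Diagram \eqref{diagram}, completes the sheaf-theoretic proof of HMS for $H$ outlined in the introduction.
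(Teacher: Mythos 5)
Your proposal follows the same order-by-order obstruction argument as the paper's own proof (both model themselves on Section (1h) and Lemma 1.9 of Seidel's book), with the same base step, the same identification of the order-$r$ defect as a cocycle in $CC^1(A,B)^{1-r}$, and the same appeal to the vanishing $HH^1(A,B)^{1-r}=0$. There is, however, a small but real error in your inductive step. You lift $v\in CC^0(A,B)^{1-r}$ to the chain level and propose to use it ``as a correction to the candidate $T^r$.'' But $v$ is a map with $r-1$ inputs while $T^r$ has $r$ inputs, so the two cannot be added. The correction must be applied to $T^{r-1}$, as the paper does: adding a $\mu^1$-closed chain-level lift $\tilde v$ of $v$ to $T^{r-1}$ leaves the order-$(r-1)$ identity undisturbed and shifts the class of the order-$r$ defect by $\partial v = h$ on the $E_1$ page, killing it. Modifying $T^r$ alone can never accomplish this, since $T^r$ enters the order-$r$ defect only through $\mu^1_\cB\circ T^r$ and $T^r\circ\mu^1_\cA$, i.e.\ it only moves the defect within its $\mu^1$-cohomology class. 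Once the $T^{r-1}$ correction makes the defect $\mu^1$-exact, your ``final adjustment'' step — choosing $T^r$ to cancel it on the chain level — is the right one, and the remaining features of your write-up (the base case $T^1$ from agreement on cohomology, the role of $T^0=0$, and the acknowledgment that the cocycle verification is bookkeeping from $\mu^1_{\nufun}\circ\mu^1_{\nufun}=0$) match the paper.
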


\subsection{Limits of restriction functors}

\subsubsection{Limits of $A_\infty$-functors} \label{sec: limit}

Given an $A_\infty$-functor $\cE:  \cC \to \cD$, its limit $\cL=\cL(\cE)$ is defined to be the $A_\infty$-category with the same objects as $\cC$, and with morphism spaces
\[\sHom_{\cL}(X_0, X_1) = \sHom_{\cC}(X_0, X_1)[1] \oplus \sHom_{\cD}(\cE X_0,\cE X_1),\]
for any objects $X_0, X_1$.  The differential on this morphism space is given by 
\begin{equation}\label{eq: limit mu1}
\mu^1_{\cL}(c_1, d_1)=(\mu^1_{\cC}(c_1), \ \cE^1(c_1)+\mu^1_{\cD}(d_1)).
\end{equation}
Note that under this differential, $(c_1, d_1)$ is closed when $c_1$ is closed and $d_1$ is a null-homotopy for $\cE^1(c_1)$.
The composition $\mu^2_{\cL}: \sHom_{\cL}(X_1, X_2) \otimes \sHom_{\cL}(X_0,X_1)\to \sHom_{\cL}(\cE X_0, \cE X_2)$ is given by
\begin{equation}\label{eq: limit mu2}
\mu^2_{\cL}((c_2, d_2), (c_1,d_1))=\left(\mu_{\cC}^2(c_2, c_1), \ \cE^2(c_2,c_1)+\mu_{\cD}^2(\cE^1(c_2), d_1)\right).
\end{equation}
To make sense of this definition of the composition, we see that for closed $(c_1, d_1)$ and $(c_2, d_2)$, we have $\mu^2_{\cC}(c_2, c_1)$ is closed and $\cE^2(c_2,c_1)+\mu^2_\cD(\cE^1(c_1), d_1)$ is a null-homotopy for $\cE^1\left(\mu^2_\cC(c_2,c_1)\right)$ because 
$\cE^1\left(\mu^2_\cC(c_2,c_1)\right)=\mu^1_\cD\left(\cE^2(c_2, c_1)\right)+\mu^2_\cD\left(\cE^1(c_2), \cE^1(c_1)\right)=\mu^1_\cD\left(\cE^2(c_2,c_1)+\mu^2_\cD(\cE^1(c_1), d_1)\right),$
where the last equality is due to Leibniz's rule that $\mu^1_\cD\left(\mu^2_D(\cE^1(c_2), d_1)\right)=\mu^2_D\left(\cE^1(c_2), \mu^1_\cD(d_1)\right)$.    Note that we made an arbitrary choice of replacing $\cE^1(c_1)$ by $\mu^1_\cD(d_1)$, instead of replacing $\cE^1(c_2)$ by $\mu^1_\cD(d_2)$, so that  $\mu^2_\cD(\cE^1(c_2), \cE^1(c_1))=\mu^2_\cD(\cE^1(c_2), \mu^1_\cD(d_1))$.  

Similarly $\mu^k_{\cL}: \sHom_{\cL}(X_{k-1},X_k)\otimes \cdots \otimes \sHom_{\cL}(X_0, X_1)\to \sHom_{\cL}(\cE X_0,\cE X_k)$ is given by
\begin{equation}\label{eq: limit muk}
\mu^k_{\cL}((c_k, d_k),\ldots, (c_1, d_1))= 
\big(\mu_{\cC}^k(c_k,\ldots,c_1), \ \cE^k(c_k,\ldots,c_1) + \Delta^{k}(c_k,\ldots, c_2, d_1)\big),
\end{equation}
where 
\[\Delta^{k}(c_k,\ldots, c_2, d_1)= \sum\limits_{1\leq j\leq k}\ \sum\limits_{s_1+\cdots+s_{j-1}=k-1}\mu^j_{\cD}\left(\cE^{s_{j-1}}(c_k,\ldots, c_{k-s_{j-1}+1}),\ldots, \cE^{s_{1}}(c_{s_1-1},\ldots,c_2), d_1\right).\]
This sum is over all ways to put all the inputs from $\sHom_{\cC}$ into copies of the functor, and then compose in $\cD$ the resulting elements of $\sHom_{\cD}$ together with the unique input from $\sHom_{\cD}$ in the final position.
and zero on all other pieces.  So, $\mu^k_{\cL}$ is identically zero unless all inputs except possibly the last one were in the $\sHom_{\cC}$ piece of $\sHom_{\cL}$.

The $A_\infty$-functor equations for $\cE$ then give the $A_\infty$-associativity equations for $\mu_{\cL}$. 
Indeed, 
\[
\begin{array}{ll}
&\hspace{-0.3in} {\sum\limits_{\substack{1\leq m\leq k \\ 0\leq n\leq k-m}}} \mu_{\cL}^{k-m+1}\left((c_k, d_k), \ldots, \mu_{\cL}^m\left((c_{n+m},d_{n+m}),\ldots, (c_{n+1},d_{n+1})\right), \ldots (c_1,d_1)\right)\\

= &{\sum\limits_{m,n}} \bigg( \mu_{\cC}^{k-m+1}\left(c_k, \ldots, c_{n+m+1}, \mu_{\cC}^m\left(c_{n+m},\ldots, c_{n+1}\right), c_n, \ldots c_1\right), \\
& \cE^{k-m+1}\left(c_k,\ldots,\mu_{\cC}^m\left(c_{n+m},\ldots, c_{n+1}\right),\ldots, c_1\right))+ 
   \Delta^{k-m+1}(c_k,\ldots, \cE^m(c_m,\ldots, c_1))+\\

&  
\Delta^{k-m+1}(c_k, \ldots, \mu_{\cC}^m(c_{n+m},\ldots, c_{n+1}), \ldots, c_2, d_1)+

\Delta^{k-m+1}(c_k,\ldots, \Delta^{m}(c_m,\ldots, c_2, d_1))\bigg)\\

= & (0,\  0+0).
\end{array}
\]
The first $0$ is given by the $A_\infty$-equations for $\mu_{\cC}$.  The second $0$ is given by the $A_\infty$-functor equations for $\cE$.  The third $0$ is given by a combination of the $A_\infty$-functor equations for $\cE$ and the $A_\infty$-equations for $\mu_{\cD}$.

\subsubsection{Matrix factorization category as a limit of the restriction functor} \label{sec: limitMF} For the \v{C}ech covering $\mathfrak U=\{U_{\alpha\beta\gamma}\}$ of the mirror manifold $Y$, where each $U_{\alpha\beta\gamma}$ is a $\mathbb C^3$ piece as in Section \ref{sec: cech}, we can assign each piece a distinct integer $\mathfrak o(\alpha\beta\gamma)\in\{1, 2, \ldots, |\mathfrak U|\}$ to make $\mathfrak U$ an ordered cover.   To do so, we can define a generic height function on the moment polytope $\Delta_Y$ so that no two vertices get the same value.  This gives an ordering of the vertices that corresponds to the $U_{\alpha\beta\gamma}$'s.

The morphism complex $\mathscr{S}=\sHom(T_0, T_1)$ is a sheaf defined by $U_{\alpha\beta\gamma}\mapsto \sHom(T_0(U_{\alpha\beta\gamma}), T_1(U_{\alpha\beta\gamma}))$ for any two objects $T_0, T_1$ in $MF(Y, W)$.  In the \v{C}ech model for $MF(Y, W)$, the morphism is given by global sections of the total complex of the double \v{C}ech complex 
\[\left(C^*(\mathfrak U, \mathscr{S}), d\right)=\left(C^0(\mathfrak U, \mathscr{S}), d_{\cA'}\right)\oplus \left(C^1(\mathfrak U, \mathscr{S}), d_{\cB}\right).\] 
The differential  
\[d=\left(
\begin{array}{ll} 
d_{\mathcal A'}  & 0 \\
\delta & d_{\mathcal B} 
\end{array}\right).\]
consists of the differential  $d_{\cA'}$ on the $\mathbb C^3$ pieces, the differential $d_{\cB'}$ on the overlaps, plus the \v{C}ech differential $\delta$ that is the difference between the restriction maps $\sigma^1: C^0(\mathfrak U, \mathscr{S})\to C^1(\mathfrak U, \mathscr{S})$ coming from two overlapping $\mathbb C^3$ pieces.  That is, 
\[\delta= \bigoplus_{\mathfrak o(\alpha\beta\gamma)<\mathfrak o(\alpha\beta\eta)} \sigma^1\oplus -\sigma^1:
\bigoplus_{\mathfrak o(\alpha\beta\gamma)<\mathfrak o(\alpha\beta\eta)}  \mathscr{S}(U_{\alpha\beta\gamma})\oplus  \mathscr{S}(U_{\alpha\beta\eta}) \longrightarrow \bigoplus_{\alpha\beta} \mathscr{S}(U_{\alpha\beta})\]
Because we are working with $\mathbb Z_2$ grading, we can ignore the minus sign when defining $\delta$ for convenience, so $\delta=\sigma^1$.   Note that this differential coincides with the differential $\mu^1_{\cL}$ for the limit category $\cL=\cL(\sigma)$ defined by Equation (\ref{eq: limit mu1}).

The product $\mu^2_{\cL}$ consists of the product $\mu^2_{\cA'}$ on $\cA'$ as well as the maps
\[\mathscr S(U_{\alpha\beta\gamma})\otimes \mathscr S(U_{\alpha\beta})\to \mathscr S (U_{\alpha\beta})\] 
given by first restricting the $\mathbb C^3$ piece to the overlap, denoted by $U_{\alpha\beta}$, and then taking the product in the overlap.  This is how the product is defined in \v{C}ech theory.

Hence, in the usual dg-model for matrix factorizations on the pieces of the \v{C}ech covering with $\sigma^1:\cA'\to \cB$ is the restriction to the overlaps between $\mathbb C^3$ pieces, the morphisms in the limit category $\cL(\sigma)$ are exactly the \v{C}ech model for $MF(Y, W)$ of the total space.

\subsubsection{Wrapped Fukaya category as a limit of the restriction functor}

In our model for the wrapped Fukaya category, $\rho:\cA \to \cB$ only has a linear term $\rho^1$, and $\rho^1$ is surjective on morphism spaces.   We have $\cA=\mathcal W(\coprod P_{\alpha\beta\gamma})$ and for any adjacent pairs of pants $P_{\alpha\beta\gamma}$ and $P_{\alpha\beta\eta}$, $\rho^1$ maps

\begin{equation}\label{pullback}
\xymatrix@R2pc@C3pc{
  & A_{\alpha\beta\gamma}:=\bigoplus\limits_{L_i,L_j} CW^*_{P_{\alpha\beta\gamma}}(L_i,L_j) \ar[d]_{ \rho^1=\rho_{\alpha\beta}^\gamma }\\
A_{\alpha\beta\eta}:=\bigoplus\limits_{L_i,L_j} CW^*_{P_{\alpha\beta\eta}}(L_i,L_j) \ar[r]^{\ \ \ \rho^1=\rho_{\alpha\beta}^\eta\ \ \ } & B_{\alpha\beta}:=\bigoplus\limits_{L_i,L_j} CW^*_{\mathcal C_{\alpha\beta}}(L_i,L_j).
}
\end{equation}
 For the above diagram, we can defined the pullback category $\cK$ with same objects as those in $\cA$ and whose morphism complexes are given by the kernel  
\[
\ker\left(\rho^1\oplus (-\rho^1): A_{\alpha\beta\gamma}\oplus A_{\alpha\beta\eta}\to B_{\alpha\beta}\right)=\left\{(a,a')\in A_{\alpha\beta\gamma}\oplus A_{\alpha\beta\eta}| \rho^1(a)=\rho^1(a')\right\}.
\]  
For the set of pairs of pants $\{P_{\alpha\beta\eta}\}$, we assign an ordering $\mathfrak o(\alpha\beta\gamma)$ as we did previously in Section \ref{sec: limitMF}.  Then we can build the category $\cK$ for the entire Riemann surface, i.e. over all pairs of pants as the above, by letting
\[\cK=\ker \left( \bigoplus \limits_{\mathfrak o(\alpha\beta\gamma)<\mathfrak o(\alpha\beta\eta)}  
\rho^1\oplus (-\rho^1): \bigoplus \limits_{\mathfrak o(\alpha\beta\gamma)<\mathfrak o(\alpha\beta\eta)} A_{\alpha\beta\gamma}\oplus A_{\alpha\beta\eta} \to \bigoplus \limits_{\alpha\beta} B_{\alpha\beta} \right). \] 
By Theorem \ref{thm: decomposition}, the wrapped Fukaya category $\mathcal W(H)$ is $\cK$.  

Restricting to one pair of adjacent pairs of pants, so $\cA=\mathcal W(P_{\alpha\beta\gamma}\coprod P_{\alpha\beta\eta})$, then for each pair of objects  $L_i, L_j$, $\ker(\rho^1\oplus (-\rho^1))$ is a subset of $\sHom_{\cA}(L_i, L_j)$, and we can think of $\cK$ as a subcategory of the limit category $\cL=\cL(\rho^1\oplus(-\rho^1))$ as defined in Section \ref{sec: limit}.    Indeed, $\mu^k_{\cL}$ maps morphisms in $\ker(\mathcal \rho^1\oplus (-\rho^1))$ just by multiplication using $\mu^k_{\mathcal A}$, and $\mu^k_{\mathcal A}$ maps $\ker(\rho^1\oplus(-\rho^1)) $ to $\ker(\rho^1\oplus (-\rho^1)) $ since 
\[\left(\rho^1\oplus -\rho^1\right)\left(\mu^k_{\cA}\left((a_k,a_k'),\ldots,(a_1, a_1')\right)\right) = \mu^k_{\cB}\left(\rho^1(a_k)-\rho^1(a_k'),\ \ldots,\ \rho^1(a_1)-\rho^1(a_1')\right).\]

We claim that the category $\cK$ is the same as the limit category $\cL$, up to quasi-isomorphism given by the inclusion of $\cK$ into $\cL$.    
 Indeed, 
\[\sHom_{\cL}(L_i, L_j) = \sHom_{\cA}(L_i, L_j) [1] \oplus \sHom_{\cB}\left((\rho^1\oplus -\rho^1) L_i,(\rho\oplus -\rho^1) L_j\right)\] is larger, but by the surjectivity of $\rho^1\oplus (-\rho^1)$, every element of $ \sHom_{\cB}\left((\rho^1\oplus -\rho^1) L_i,(\rho^1\oplus -\rho^1) L_j\right)$ is of the form $ \left( \rho^1\oplus -\rho^1\right)(x,x')$, and is then equal mod $\mu^1_{\cL}(x,x')$ to $-\mu^1_{\cA}(x,x')$. So the part of $\sHom_{\cA}(L_i, L_j)[1]$ that's not in $\ker\left(\rho^1\oplus -\rho^1\right)$ kills the $ \sHom_{\cB}\left((\rho^1\oplus -\rho^1) L_i,(\rho^1\oplus -\rho^1) L_j\right)$ part, and what remains in cohomology is exactly the same as the cohomology of $\ker\left(\rho^1\oplus -\rho^1\right)$, i.e. the inclusion is a quasi-isomorphism.   Again, since we are working with $\mathbb Z_2$ grading, we can ignore the minus signs, so $\mathcal W(H)$ is quasi-isomorphic to  $\cL(\rho)$.

\subsubsection{Quasi-equivalence of the limits}

We have two homotopic restriction functors $\mathcal \rho: \mathcal A\to \mathcal B$ and $\sigma: \mathcal A'\to \mathcal B$. In addition, as we discussed above,  $\mathcal W(H)$ is quasi-isomoporphic to $\cL(\rho)$ and $MF(Y,W)=\cL(\sigma)$.  Specializing $\cE$ and $\tcE$ in Lemma \ref{lemma: qe of limits} below to $\rho$ and $\sigma$ gives us the following theorem.
\begin{theorem} The wrapped Fukaya category $D \mathcal W(H)$ is quasi-equivalent to the category of matrix factorizations $D^b_{sing}(X, W)$.  
\end{theorem}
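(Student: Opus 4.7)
The plan is to prove this as an immediate application of Lemma \ref{lemma: qe of limits} (which must be established as an auxiliary categorical fact), combined with the identifications already built in the preceding two subsections. Those subsections showed that $\mathcal W(H)$ is quasi-equivalent to $\cL(\rho)$ and that $MF(Y,W)$ equals $\cL(\sigma)$ in the \v{C}ech model. The corollary concluding Section \ref{sec: cech} produced an $A_\infty$-homotopy $T$ between the restriction functors $\rho$ and $\sigma\circ \cQ_a$, where $\cQ_a:\cA\to \cA'$ is the quasi-equivalence coming from homological mirror symmetry for the pair of pants \cite{AAEKO13}. Thus the hypotheses of Lemma \ref{lemma: qe of limits} are exactly met with $(\cE,\tcE,\cQ)=(\rho,\sigma,\cQ_a)$, and the theorem follows.

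The core content is therefore Lemma \ref{lemma: qe of limits}, whose proof I would organise as follows. Assume $\cE:\cC\to \cD$, $\tcE:\tcC\to \cD$, a quasi-equivalence $\cQ:\cC\to \tcC$, and a homotopy $T$ with $\mu^1_{\nufun(\cC,\cD)}(T)=\cE-\tcE\circ \cQ$ are given. I would construct an explicit $A_\infty$-functor $\Phi:\cL(\cE)\to \cL(\tcE)$ by setting $\Phi(X)=\cQ(X)$ on objects, and, on a morphism $(c,d)\in \sHom_{\cC}(X,Y)[1]\oplus \sHom_{\cD}(\cE X,\cE Y)$, specifying to leading order
\[
\Phi^1(c,d)=\bigl(\cQ^1(c),\ d+T^1(c)\bigr).
\]
The term $T^1(c)$ is precisely the correction needed for $\Phi^1$ to intertwine the differentials $\mu^1_{\cL(\cE)}$ and $\mu^1_{\cL(\tcE)}$, since the defining equation for $T$ turns the mismatch between $\cE^1$ and $(\tcE\circ\cQ)^1$ into a boundary. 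Higher components $\Phi^k$ are assembled by the same recipe as for the $A_\infty$-products of $\cL$ in Equations \eqref{eq: limit mu2}--\eqref{eq: limit muk}, interleaving copies of $\cQ$, $T$, $\cE$, and $\tcE$ in all admissible orderings; the $A_\infty$-functor relations for $\Phi$ then follow from the $A_\infty$-relations for $\cQ$, the functor equations for $\cE$ and $\tcE$, and the equation above for $T$.

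To conclude that $\Phi$ is a quasi-equivalence I would note that essential surjectivity on cohomology follows from the same property of $\cQ$, since $\Phi$ acts as $\cQ$ on objects. For the morphism complexes, $\sHom_{\cL(\cE)}(X,Y)$ is by construction the mapping cone of $\cE^1:\sHom_{\cC}(X,Y)\to \sHom_{\cD}(\cE X,\cE Y)$, and similarly for $\sHom_{\cL(\tcE)}$. The functor $\Phi^1$ then induces a morphism between the two resulting long exact sequences whose terms on $\sHom_{\cC}$ and on $\sHom_{\cD}$ are respectively the quasi-isomorphism $\cQ^1$ and the identity, so the five lemma forces $\Phi^1$ to be a quasi-isomorphism.

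The main technical obstacle will be writing down and checking the higher components $\Phi^k$ so that the $A_\infty$-functor equations hold to all orders. This is a combinatorial verification in the spirit of the associativity check at the end of Section \ref{sec: limit}, requiring careful bookkeeping of every way in which $T$, $\cQ$, $\cE$, and $\tcE$ can appear in a single term. Once the formula is written correctly, however, each term should pair with another and cancel via one of the four available $A_\infty$-identities, so this step is routine rather than genuinely difficult.
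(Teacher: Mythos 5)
Your proposal matches the paper's proof: it reduces the theorem to the same auxiliary lemma (the paper's Lemma \ref{lemma: qe of limits}), constructs the comparison functor with the identical first-order formula $\cP^1(c,d)=(\cQ^1(c),\,d+T^1(c))$, and builds the higher components by the same interleaving recipe. The only cosmetic difference is in certifying that $\cP^1$ is a quasi-isomorphism—you invoke the cone/long-exact-sequence and the five lemma, while the paper writes down an explicit inverse—but these are equivalent and your version is, if anything, a slightly cleaner way to say it.
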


For the following lemma, we use the same notations as those in Section \ref{sec: limit}.  See Section 1 of \cite{Se08} for background discussions on $A_\infty$-categories, in particular, the discussion on homotopy in Section (1h), and Equations (1.2), (1.6), (1.7), and (1.8), which are the $A_\infty$-associativity equations, $A_\infty$-functor equations, formula for compositions of $A_\infty$-functors, and formula for the differential of pre-natural transformations, respectively.
\begin{lemma}\label{lemma: qe of limits} Suppose there are functors $\cE:\cC\to \cD$ and $\tcE: \tcC \to \cD$, and there is an quasi-isomorphism $\cQ: \cC\to \tcC$ such that  $\cE$ and $\tcE\circ \cQ$ are homotopic functors that act the same way on objects.  Then the limits $\cL(\cE)$ and $\cL(\tcE)$ are quasi-isomorphic.
\end{lemma}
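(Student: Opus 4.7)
The plan is to construct an explicit $A_\infty$-functor $\Phi:\cL(\cE)\to \cL(\tcE)$ built from $\cQ$ and the homotopy $T \in \sHom^0_{\nufun(\cC,\cD)}(\cE,\tcE\circ\cQ)$ with $\mu^1_{\nufun(\cC,\cD)}(T)=\cE-\tcE\circ\cQ$, and then to verify it is a quasi-equivalence via a five-lemma argument on the natural mapping-cone filtration of the limit morphism complexes.

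On objects, I would set $\Phi(X)=\cQ(X)$; this is well-defined between the object sets of $\cL(\cE)$ and $\cL(\tcE)$ since $\cE X = \tcE(\cQ X)$ in $\cD$ by hypothesis. For the linear term on morphisms I propose
\begin{equation*}
\Phi^1(c,d) \;=\; \bigl(\cQ^1(c),\ d + T^1(c)\bigr)
\end{equation*}
for $(c,d)\in \sHom_\cC(X_0,X_1)[1]\oplus \sHom_\cD(\cE X_0,\cE X_1)$. Higher-order terms $\Phi^k$ are defined by combinatorial formulas analogous to (\ref{eq: limit muk}): the $\tcC$-slot equals $\cQ^k(c_k,\ldots,c_1)$ when none of the $d_i$ inputs appear (and vanishes otherwise), while the $\cD$-slot is a sum built from components $T^j,\cQ^j,\cE^j,\tcE^j$, organized along the same ``rightmost input'' convention used to define $\mu^k_\cL$, with $T$-insertions playing the role of interpolating between $\cE$ and $\tcE\circ\cQ$.

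Next I would verify the $A_\infty$-functor equations for $\Phi$. They split according to the two slots: in the $\tcC$-slot they reduce to the $A_\infty$-functor equations already satisfied by $\cQ$, while in the $\cD$-slot they reduce, after grouping terms, to the higher-order Hochschild identities packaged by the homotopy relation $\mu^1_{\nufun(\cC,\cD)}(T)=\cE-\tcE\circ\cQ$ (see (\ref{eqn: fun})) combined with the $A_\infty$-functor equations for $\cE$, $\tcE$, and $\cQ$. At linear order, the required $\cD$-slot identity is the chain-map equation
\begin{equation*}
\cE^1(c) - \tcE^1(\cQ^1 c) \;=\; \mu^1_\cD(T^1 c) - T^1(\mu^1_\cC c),
\end{equation*}
which is precisely the $r=1$ component of the homotopy relation. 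The overall procedure is formally analogous to producing a morphism of mapping cones from a chain-level homotopy between chain maps; all higher terms follow the same pattern.

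Finally, to show $\Phi$ is a quasi-equivalence, I would exploit the two-step filtration of the limit morphism complexes, which gives the short exact sequence
\begin{equation*}
0 \to \sHom_\cD(\cE X_0,\cE X_1) \to \sHom_{\cL(\cE)}(X_0,X_1) \to \sHom_\cC(X_0,X_1)[1] \to 0,
\end{equation*}
and the analogous sequence for $\cL(\tcE)$. By construction, $\Phi^1$ respects this filtration: it restricts to the identity on the $\sHom_\cD$-subcomplex and induces $\cQ^1$ on the $\sHom_\cC[1]$-quotient. Since $\cQ^1$ is a quasi-isomorphism on every hom complex by hypothesis, the five-lemma applied to the associated long exact sequences in cohomology shows that $\Phi^1$ is a quasi-isomorphism on every hom complex. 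Essential surjectivity of $H^0(\cQ)$ transports to essential surjectivity of $H^0(\Phi)$ by way of the long exact sequence: any $H^0(\tcC)$-iso $\cQ Y \xrightarrow{\sim} \tilde X$ determines a corresponding invertible class in $H^0(\cL(\tcE))(\cQ Y,\tilde X)$. The main obstacle will be the explicit bookkeeping for the higher-order terms $\Phi^k$: writing them in a form compatible with the ``rightmost input'' convention built into (\ref{eq: limit muk}) so that the functor equations for $\Phi$ reduce, term by term, to the $A_\infty$-functor equations for $\cQ$, $\cE$, $\tcE$ together with the Hochschild identity for $T$. Everything else is formal.
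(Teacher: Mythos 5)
Your construction is essentially the same as the paper's: you define $\Phi$ on objects via $\cQ$, use the same first-order formula $\Phi^1(c,d)=(\cQ^1(c),\,d+T^1(c))$, build higher $\Phi^k$ from $\cQ^k$, $T^k$, and a sum of $\mu^j_\cD$-compositions of $\cE$, $\tcE\circ\cQ$, $T$ applied to the tail inputs (the paper's $\Diamond^k$), and verify the $A_\infty$-functor equations by regrouping into the functor equations for $\cQ$ plus the Hochschild identities encoded by $D=\mu^1_{\nufun(\cC,\cD)}(T)$. The one place you diverge is the argument that $\Phi$ is a quasi-isomorphism: the paper writes down a purported inverse $(\cP^1)^{-1}(\tilde c,\tilde d)=((\cQ^1)^{-1}\tilde c,\ \tilde d - T^1((\cQ^1)^{-1}\tilde c))$, which literally presupposes a chain-level inverse $(\cQ^1)^{-1}$ (or should be read as a cohomology-level statement), whereas you observe that $\Phi^1$ respects the two-step filtration $\sHom_\cD\hookrightarrow\sHom_{\cL(\cE)}\twoheadrightarrow\sHom_\cC[1]$, acting as the identity on the subcomplex and as $\cQ^1$ on the quotient, and then apply the five lemma. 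Your route only uses that $\cQ^1$ is a quasi-isomorphism, which is precisely the hypothesis, so it is a modest improvement in rigor at that step; otherwise the two proofs coincide.
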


\begin{proof} The functors $\cE, \ \tcE\circ Q$ are themselves objects of an $A_\infty$-category $ \nufun(\cC, \cD)$, and let $D=\tcE\circ \cQ -\cE\in \sHom_{\nufun(\cC,\cD)}^1(\cE,\tcE)$ be the natural transformation defined by $D^k=(\tcE\circ \cQ)^k-\cE^k$.   Then, $\cE$ and $\tcE\circ Q$ being homotopic means that $D=\mu^1_{\nufun(\cC, \cD)}(T)$ for some $T\in \sHom^0_{\nufun(\cC, \cD)}(\cE, \tcE\circ Q)$ with $T^0=0$.   That is
\begin{equation}\label{eqn: Dk}
\begin{array}{l}
(\tcE\circ \cQ)^k - \cE^k =D^k =\mu^1_{\nufun(\cC, \cD)}(T)^k(c_k,\ldots, c_1) \vspace{0.1in} \\

 \hspace{0.95in} ={\sum\limits_{1\leq i\leq j}} \ \ {\sum\limits_{s_1+\cdots +s_j=k}}
\mu^j_\cD  \Big((\tcE\circ Q)^{s_j}(c_k,\ldots, c_{d-s_j+1}),\ldots, (\tcE\circ Q)^{s_{i+1}}(\ldots, c_{s_1+\cdots+s_{i}+1}),\vspace{-0.05in} \\
  \hspace{2.6in} T^{s_i}(c_{s_1+\cdots+s_i},\ldots, c_{s_1+\cdots+s_{i-1}+1}),\\
 \hspace{2.7in}  \cE^{s_{i-1}}(c_{s_1+\cdots+s_{i-1}}, \ldots),\ldots, \cE^{s_1}(c_{s_1},\ldots,c_1) \Big)\\

\hspace{1.2in} -{\sum\limits_{m,n}} \ \ T^{k-m+1}  \Big(c_d,\ldots, c_{n+m+1}, \mu_\cC^m(c_{n+m},\ldots, c_{n+1}),c_n, \ldots, c_1\Big),

\end{array}
\end{equation}
where the terms with $s_i=0$ vanish because  $T^0=0$, so we can just assume $s_i\neq 0$. For the first two orders, we have 
\begin{equation} \label{eqn: D1}
(\tcE\circ \cQ)^1-\cE^1=D^1(c)=\mu^1_\cD(T^1(c))+T^1(\mu^1_\cC(c)),
\end{equation}
\begin{equation} \label{eqn: D2}
\begin{array}{ll}
(\tcE\circ \cQ)^2-\cE^2=D^2(c_2,c_1)=& 
\mu^1_\cD\left(T^2(c_2,c_1)\right) +T^2\left(c_2,\mu^1_\cC(c_1)\right)+T^2\left(\mu^1_\cC(c_2),c_1\right) \\
& \hspace{-0.1in} +\mu^2_\cD\left(T^1(c_2),\cE^1(c_1)\right)+\mu^2_D\left((\tcE\circ Q)^1(c_2), T^1(c_1)\right)+T^1\left(\mu^2_{\cC}(c_2,c_1)\right),
\end{array}
\end{equation}
and recall that  $(\tcE\circ \cQ)^2(c_2, c_1)=\tcE^2\left(\cQ^1(c_2), \cQ^1(c_1)\right) +\tcE^1\left( \cQ^2(c_2, c_1)\right)$.

We now construct an $A_\infty$-functor $\cP: \cL(\cE)\to \cL(\tcE)$, with the map on objects given by $\cQ$.    For the first order, let 
\[\cP^1: \sHom_\cC(X_0, X_1)[1]\oplus \sHom_\cD(\cE X_0, \cE X_1) \to  \sHom_{\tcC}(\cQ X_0, \cQ X_1)[1]\oplus \sHom_\cD((\tcE\circ \cQ) X_0, (\tcE\circ \cQ) X_1) \]
such that 
\begin{equation}\label{eqn: P1}
\cP^1(c,d)=(\cQ^1(c), d+T^1(c))
\end{equation} 
As defined, $\cP^1$ is satisfies the first order $A_\infty$-functor equation, i.e. it is a chain map.  Indeed,
\[
\begin{array}{ll}
\mu^1_{\cL(\tcE)}\left(\cP^1\left(c, d\right)\right)  =\mu^1_{\cL(\tcE)}\left(\cQ^1(c),\  d+T^1(c)\right) 
 &= \left(\mu^1_\cC(\cQ^1(c)), \ (\tcE\circ Q)^1(c)+\mu_\cD^1(T^1(c))+\mu_\cD^1(d) \right),\\
& = \left(\cQ^1(\mu^1_\cC(c)),\  T^1(\mu_\cD^1(c))+\cE^1(c)+\mu^1_\cD(d) \right)\\
\end{array}
\]
is equal to
\[
\cP^1\left(\mu^1_{\cL(\cE)}(c, d)\right)  = \cP^1\left(\mu^1_\cC(c), \ \cE^1(c)+\mu^1_\cD(d)\right)=
 \left(\cQ^1(\mu^1_\cC(c)),\  T^1(\mu_\cD^1(c))+\cE^1(c)+\mu_\cD^1(d) \right) 
  \]

\noindent In general, we defined for $k\geq 2$ that
\begin{equation}\label{eqn: Pk}
\cP^k((c_k, d_k),\ldots, (c_1,d_1))=\left(Q^k(c_k,\ldots, c_1),\ \ T^k(c_k, \ldots, c_1)+\Diamond^k(c_k,\ldots, c_2, d_1) \right),
\end{equation}
where 
\[
\begin{array}{ll}
\Diamond^k(c_k,\ldots, c_2, d_1) &  \\
={\sum\limits_{\substack{1\leq i \leq j-1\\  2\leq j\leq k}}} \ \ {\sum\limits_{\substack{s_1+\cdots +s_{j-1}=k-1 \\ s_i\neq 0}}}
\mu_\cD^j \Big(&  (\tcE\circ \cQ)^{s_{j-1}}(c_k,\ldots, c_{k-s_{j-1}+1}),\ldots, 
(\tcE\circ \cQ)^{s_i+1}(\ldots, a_{s_1+\cdots+s_i+1}),\vspace{-0.18in}\\

 & T^{s_i}(c_{s_1+\cdots+s_i},\ldots, , c_{s_1+\cdots+s_{i-1}+1}),\\

 & \cE^{s_i-1}(c_{s_1+\cdots+s_{i-1}}, \ldots),\ldots,  \cE^{s_1}(c_{s_1-1},\ldots, c_2)    ,d_1\Big).
\end{array}\]
We will show below that $\cP$ is an $A_\infty$-functor.  Before we do that, using the fact that $\cQ$ is a quasi-isomorphism, we see that $\cP$ is a quasi-isomorphism because the $\cP^1$ has a inverse  
\[(\cP^1)^{-1}(\tilde c,\tilde d)= \left ((\cQ^1)^{-1}(\tilde c),\ \  \tilde d-T^1\left((\cQ^1)^{-1}(\tilde c)\right)\right).\]

Below we show that $\cP^2\left((c_2,d_2),(c_1,d_1)\right)=\left(\cQ^2(c_2,c_1),T^2(c_2,c_1)+\mu_\cD^2(T^1(c_2),d_1 )\right)$ satisfies the 2nd order $A_\infty$-functor equation, and it illustrates features of the messy calculations involved in showing that all $\cP^k$'s satisfy the $A_\infty$-functor equations.
\[\begin{array}{l}
\mu^1_{\cL(\tcE)}\left(\cP^2\left((c_2,d_2),(c_1,d_1)\right)\right)  +\cP^2\left(\mu^1_{\cL(\cE)}(c_2,d_2),(c_1,d_1)\right)+ \cP^2\left((c_2,d_2),\mu^1_{\cL(\cE)}(c_1,d_1)\right)\\ 

\hspace{2in}  + \cP^1\left(\mu^2_{\cL(\cE)} \left( (c_2,d_2),(c_1,d_1)  \right) \right)+\mu^2_{\cL(\tcE)}\left( \cP^1\left(c_2, d_2 \right), \cP^1\left(c_1, d_1\right)  \right)
 \\

=\bigg( \mu^1_{\cL(\tcE)}\Big(  \cQ^2(c_2,c_1),T^2(c_2,c_1)+\mu_\cD^2(T^1(c_2),d_1 )\Big) \bigg)
 + \bigg( \cP^2\Big(\left(\mu^1_\cC(c_2), \cE^1(c_2)+\mu^1_\cD(d_2))\right) ,(c_1,d_1)   \Big) \bigg)\\
 
\ \ +\bigg( \cP^2\Big((c_2,d_2), \left(\mu^1_\cC(c_1), \cE^1(c_1)+\mu^1_\cD(d_1))\right)    \Big) \bigg)
 + \bigg( \cP^1\Big( \mu_\cC^2(c_2,c_1), \cE^2(c_2,c_1)+ \mu^2_{\cD}(\cE^1(c_2),d_1)\Big)   \bigg)\\

\ \ +\bigg(\mu^2_{\cL(\tcE)}\Big( \left(\cQ^1(c_2), d_2 + T^1(c_2)\right),  \left(\cQ^1(c_1), d_1 + T^1(c_1)\right) \Big) \bigg)
\end{array}
\]
\[
\begin{array}{l}
= \Bigg(\mu^1_\cC\left(\cQ^2(c_2,c_1)\right)+\cQ^2\left(\mu^1_\cC(c_2), c_1\right)+\cQ^2\left(c_2,\mu^1_\cC(c_1)\right)+ \cQ^1\left(\mu^2_\cC(c_2,c_1)\right)+\mu^2_\cC\left(\cQ^1(c_2),\cQ^1(c_1)\right),\\

\hspace{1in} \Big(\boxed{\tcE^1\left( \cQ^2(c_2,c_1)\right)}+ \boxed{\mu^1_\cD\left(T^2(c_2,c_1)\right)}+ \mu^1_\cD\left(\mu^2_\cD\left(T^1(c_2),d_1\right)\right)\Big)\\

\hspace{1in} +\Big(\boxed{T^2\left(\mu^1_\cC(c_2),c_1\right)}+\mu^2_\cD\left(T^1\left(\mu^1_\cC(c_2)\right),d_1\right)\Big)\\
\hspace{1in} +\Big(\boxed{T^2\left(c_2, \mu^1_\cC(c_1)\right)}+\boxed{\mu^2_\cD\left(T^1(c_2),\cE^1(c_1)\right)}+\mu^2_\cD\left(T^1(c_2),\mu^1_\cD(d_1)\right)\Big)\\

\hspace{1in} + \Big(\boxed{ T^1\left(\mu^2_\cC(c_2,c_1)\right)}+\boxed{\cE^2(c_2,c_1)}+\mu^2_\cD\left(\cE^1(c_2),d_1\right)  \Big)\\

\hspace{1in}+ \left( \boxed{\tcE^2\left(\cQ^1(c_2),\cQ^1(c_1)\right)}+\mu^2_\cD\left((\tcE\circ\cQ)^1(c_2),d_1\right)+
\boxed{\mu^2_\cD\left((\tcE\circ\cQ)^1(c_2), T^1(c_1)\right)}  \right)

\Bigg)\\

=\big(0, 0\big).

\end{array}
\]
In the above calculation, the first component is $0$ because $\cQ$ is an $A_\infty$-functor.   For the second component, all terms that are boxed add up to $0$ because of Equation (\ref{eqn: D2}).  The leftover terms also add up to $0$ due to the Leibniz's rule for $\mu_\cD$ and Equation (\ref{eqn: D1}).  

Just like the above calculation, in general all the terms in the $k$-th order $A_\infty$-functor equation for $\cP$ add up to zero.  Indeed, the first component is $0$ because it is just a sum of the terms in the $k$-th order $A_\infty$-functor equation for $\cQ$.  The the second component, a subset of the terms  (like the boxed terms above) gives $(\tcE\circ \cQ)^k+\cE^k+D^k=0$.  The leftover terms in the second component add up to $\mu^2_\cD\left((\tcE\circ \cQ)^{k-1}+\cE^{k-1}+D^{k-1}, \ d_1\right)=\mu^2_\cD(0,d_1)=0$ after manipulations using the $k$-th order $A_\infty$-associativity equations for $\mu_D$, the lower order $A_\infty$-functor equations for $\tcE\circ Q$, and the lower order equations in (4.7).  
\end{proof}

\appendix
\section{Proof of split-generation for the wrapped Fukaya category}\label{app: generation}

\begin{lemma}
The wrapped Fukaya category $\mathcal W(H)$ is split-generated by objects $L_{\alpha}(k)$, $\alpha\in A$, $k\in \{0,1\}$.
\end{lemma}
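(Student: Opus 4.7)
The plan is to apply Abouzaid's split-generation criterion \cite{Ab10}: if the full subcategory $\mathcal W_0\subset \mathcal W(H)$ spanned by a chosen collection of Lagrangians admits a Hochschild cycle $\widetilde\sigma\in CC_*(\mathcal W_0,\mathcal W_0)$ whose image under the open-closed map $\mathcal{OC}:HH_*(\mathcal W_0)\to SH^*(H)$ is the unit $1\in SH^0(H)$, then $\mathcal W_0$ split-generates $\mathcal W(H)$. Throughout I take $\mathcal W_0$ to be the full subcategory on $\{L_\alpha(0),\,L_\alpha(1):\alpha\in A\}$, and I build $\widetilde\sigma$ out of the pair-of-pants decomposition $H=\bigcup P_{\alpha\beta\gamma}$.

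The local input is the pair-of-pants calculation of \cite{AAEKO13}. In each $P_{\alpha\beta\gamma}$ the three Lagrangians $L_\alpha(0),L_\beta(0),L_\gamma(0)$ meet pairwise at distinguished degree-one generators $p_{\alpha\beta}^\gamma\in CF^1_{P_{\alpha\beta\gamma}}(L_\alpha(0),L_\beta(0))$ (and its cyclic permutations), and the length-three tensor
\[
\sigma_{\alpha\beta\gamma} \;=\; p_{\alpha\beta}^\gamma\otimes p_{\beta\gamma}^\alpha\otimes p_{\gamma\alpha}^\beta,
\]
suitably antisymmetrized over the cyclic group, is a Hochschild cycle in $CC_*(\mathcal W(P_{\alpha\beta\gamma}))$ whose image under the pair-of-pants open-closed map is the local unit. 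The first step of the proof is to form the global chain $\sigma_0=\sum_{(\alpha,\beta,\gamma)}\sigma_{\alpha\beta\gamma}$; by Lemma \ref{lemma: ideal} the only contributions to its Hochschild differential which prevent it from being a cycle on $\mathcal W_0$ come from discs confined to a single bounded cylinder $\mathcal C_{\alpha\beta}$.

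The second step is to kill those obstructions. This is where the second family $L_\alpha(1)$ enters: for each bounded edge $e_{\alpha\beta}$ the cylinder $\mathcal C_{\alpha\beta}$ carries extra odd-degree generators in $\mathcal C_{\alpha\beta}(L_\alpha(0),L_\alpha(1))$ and $\mathcal C_{\alpha\beta}(L_\beta(0),L_\beta(1))$ (cf.\ the labelling introduced after Theorem \ref{thm: decomposition}), and I use these to write down correction chains $\tau_{\alpha\beta}\in CC_*(\mathcal W_0,\mathcal W_0)$ supported in $\mathcal C_{\alpha\beta}$. By Lemmas \ref{lemma: ideal} and \ref{lemma: global}, all polygons involved in $b(\tau_{\alpha\beta})$ and in $b(\sigma_0)|_{\mathcal C_{\alpha\beta}}$ stay inside $\mathcal C_{\alpha\beta}$, so the cancellation $b(\sigma_0+\sum_{\alpha\beta}\tau_{\alpha\beta})=0$ can be verified edge by edge using the explicit cylinder model; set $\widetilde\sigma=\sigma_0+\sum_{\alpha\beta}\tau_{\alpha\beta}$.

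The third step is to compute $\mathcal{OC}(\widetilde\sigma)$. By the same locality, each rigid disc contributing to $\mathcal{OC}$ whose boundary marked points lie in $\sigma_{\alpha\beta\gamma}$ is contained in $P_{\alpha\beta\gamma}$, and each disc with inputs in $\tau_{\alpha\beta}$ is contained in $\mathcal C_{\alpha\beta}$. Thus the pants pieces reproduce the \cite{AAEKO13} local units, while the cylinder pieces assemble, via a Mayer--Vietoris-type identification of $SH^*(H)$ with the totalization of the symplectic cohomologies of the pants and cylinders, into the global unit $1\in SH^0(H)$. The main obstacle is precisely this second step: one must write down the corrections $\tau_{\alpha\beta}$ with compatible signs, degrees and orderings of the four generating Lagrangians $L_\alpha(0),L_\alpha(1),L_\beta(0),L_\beta(1)$ along each bounded edge, and then check that after adding them the resulting global class does not collapse to an exact class but genuinely maps to the unit -- and it is this edge-by-edge cancellation that forces the inclusion of the twisted objects $L_\alpha(1)$ alongside $L_\alpha(0)$.
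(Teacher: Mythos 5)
Your approach is genuinely different from the paper's, but it contains a gap at precisely the point you identify as ``the main obstacle,'' and this gap is not merely technical.

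The paper does not attempt to build a Hochschild cycle directly out of the objects $L_\alpha(0), L_\alpha(1)$. Instead, it introduces auxiliary circle Lagrangians $S_{\alpha\beta}$ (the circle $\{\tau=2\}\times S^1$ in each bounded cylinder) and proceeds in two steps. First, Seidel's exact triangle for Dehn twists shows that each $S_{\alpha\beta}$ lies in the subcategory split-generated by $L_\alpha(0)$ and $L_\alpha(1)$, since Dehn twisting $L_\alpha(0)$ around all the $S_{\alpha\beta}$'s along $\partial C_\alpha$ produces $L_\alpha(1)$. Second, the Hochschild cycle $\Phi$ is built not from intersection points of $L_\alpha(0)$ with $L_\beta(0)$, but from the corners of polygons whose sides alternate between the $L$'s and the $S$'s: each pair of pants is covered exactly once by a ``front'' polygon and once by a ``back'' polygon, with corners at generators like $p_\alpha^\gamma\in CW(L_\alpha(0),S_{\alpha\gamma})$ and $q_\alpha^\gamma\in CW(S_{\alpha\gamma},L_\gamma(0))$. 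Because adjacent pairs of pants share the $S$-corners, the terms of $b\Phi$ cancel pairwise (between the front and back of the same pair of pants, and between polygons from adjacent pairs of pants), and no correction terms are needed. This is the crucial payoff of adding the circles: the polygons tile $H$ cleanly with matching corners, so $\Phi$ is a cycle on the nose and $\mathcal{OC}(\Phi)=1\in SH^0(H)$ follows from the once-covering.

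Your version avoids the circle Lagrangians and wants to work with $L_\alpha(0),L_\alpha(1)$ alone, summing the local pair-of-pants cycles and then correcting by chains $\tau_{\alpha\beta}$ supported in the cylinders. Two problems. First, the claim that the triple tensor $p_{\alpha\beta}^\gamma\otimes p_{\beta\gamma}^\alpha\otimes p_{\gamma\alpha}^\beta$ (of degree-one generators between the $L_\alpha(0)$'s alone) is a Hochschild cycle whose open-closed image is the local unit in $P_{\alpha\beta\gamma}$ is asserted but not established; it is not the cycle used in \cite{AAEKO13}, and it is not clear the product $\mu^3$ of three such generators vanishes as required. Second, and more seriously, the correction chains $\tau_{\alpha\beta}$ are never constructed: you correctly observe that they must involve the objects $L_\alpha(1)$ and morphisms such as generators of $\mathcal C_{\alpha\beta}(L_\alpha(0),L_\alpha(1))$, but you give no formula, no degree count, and no verification that such corrections exist or that after adding them $\mathcal{OC}(\widetilde\sigma)$ is still the unit rather than zero. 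This is the entire content of the lemma, not a remaining technicality. Until the $\tau_{\alpha\beta}$ are exhibited and the edge-by-edge cancellation is checked, the argument does not go through; the paper's route via the $S_{\alpha\beta}$ and the exact triangle is precisely what makes this verification tractable, because there the polygon corners match up automatically and there is nothing left to correct.
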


\begin{proof}
For each bounded edge $e_{\alpha\beta}\cong (0,4)\times S^1$, denote by $S_{\alpha\beta}$ the circle $\{\tau=2\}\times S^1$.  The Dehn twist of $L_\alpha(0)$ successively along all of the $S_{\alpha\beta}$'s for the bounded edges along the boundary of a component $C_\alpha$ is isomorphic to $L_{\alpha}(1)$.   So by Seidel (cf. Section 17 of \cite{Se08}), there is an exact triangle 
\[
\xymatrix{
\bigoplus S_{\alpha\beta}\ar[r]  &    L_{\alpha}(0)\ar[d]\\
 & L_{\alpha}(1)\ar[lu]^{[1]}    \  .}
\]
Hence, the $S_{\alpha\beta}$'s are split-generated by $L_{\alpha}(0)$ and $L_{\alpha}(1)$.

Let $P_{\alpha\beta\gamma}$ be any pair-of-pants in $H$, and say it is at the joint of components $C_{\alpha}, C_\gamma, C_\beta$, in that counterclockwise order  in Fig. 10.  We  consider two polygons that cover $P_{\alpha\beta\gamma}$ (the ``front'' and ``back'' of the pants) with boundaries in Lagrangians $L_{\alpha}(0)$,  $S_{\alpha\gamma}$ (only if the edge $e_{\alpha\gamma}$ is bounded), $L_{\gamma}(0)$, $S_{\gamma\beta}$ (only if $e_{\gamma\beta}$ is bounded), $L_\beta(0)$, and $S_{\beta\alpha}$ (only if $e_{\beta\alpha}$ is bounded).

On a bounded edge $e_{\alpha\gamma}$, the front polygon has two corners---one corner at a generator labeled by $p_{\alpha}^{\gamma}\in CW(L_{\alpha}(0), S_{\alpha\gamma})$ and another at a generator of the same degree labeled by $q^{\gamma}_{\alpha}\in CW(S_{\alpha\gamma}, L_{\gamma}(0))$.  On an unbounded edge $e_{\beta\alpha}$,  the front polygon has one corner at the first non-trivial trajectory of $H_n$, from $L_{\beta}(0)$ to $L_{\alpha}(0)$, in the infinite cylindrical end (i.e. the trajectory that goes ``halfway around the cylinder''); it is a degree 1 generator labeled by $x^0_{\beta; \alpha}$ as in Section \ref{section: PPD}.  So, this is a hexagon for a pair-of-pants that only has bounded legs, a triangle for a pair-of-pants that only has unbounded legs, and a quadrilateral or pentagon for the intermediate cases; see Fig. \ref{fig: generation}.  The polygon at the back of the pants also has two corners on a bounded edge $e_{\alpha\gamma}$ at the generators $(p_{\alpha}^{\gamma})^*\in CW(S_{\alpha\gamma}, L_{\alpha}(0))$ and ($q^{\gamma}_{\alpha})^*\in CW(L_\gamma(0),S_{\alpha\gamma})$.  Note that $\deg((p_{\alpha}^{\gamma})^*)=\deg(p_{\alpha}^{\gamma})+1$ and $\deg((q_{\alpha}^{\gamma})^*)=\deg(q_{\alpha}^{\gamma})+1$.  Similarly, the back polygon has one corner on an unbounded edge $e_{\alpha\beta}$ at the degree 1 generator $x^0_{\alpha;\beta}$.  
\begin{figure}[h]
\centering
	\scalebox{0.65}{\includegraphics{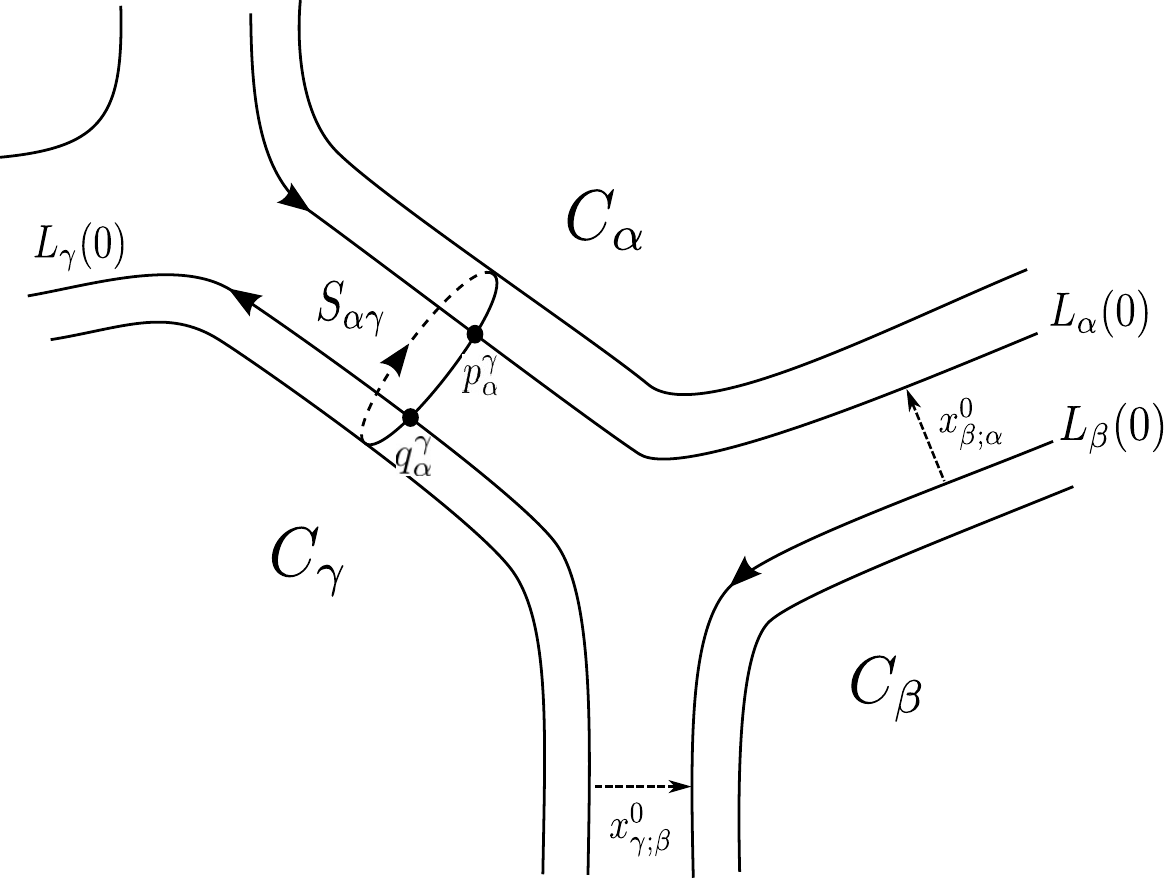}}
	\caption{Lagrangians $L_\alpha(0),\ L_\beta(0), \ L_\gamma(0)$, and $S_{\alpha\gamma}$. The pair-of-pants $P_{\alpha\beta\gamma}$ in this figure is covered by quadrilaterals, one in the front and one in the back.} 
	\label{fig: generation}
\end{figure}

A Hochschild chain complex 
\[CC_*(\mathcal W, \mathcal W)=\bigoplus CW^*(L_{d-1}, L_0)\otimes CW^*(L_{d-2}, L_{d-1}) \otimes \cdots \otimes CW^*(L_0, L_1)\] is generated by cyclically composable words $\underline{a_d}\otimes a_{d-1}\cdots \otimes a_1$ where each $a_i\in CW^*(L_{i-1}, L_{i})$.  We underline the distinguished generator  $\underline{a_d}$, and we think of $CW^*(L_{d-1}, L_0)$ as an element of a $\mathcal W$-bimodule.   The degree of $\underline{a_d}\otimes a_{d-1}\otimes\cdots\otimes a_1$ is given by $\deg(a_d)+\sum_{i=1}^{d-1}(\deg(a_i)+1)$, i.e. $\underline {a_d}$ contributes its usual degree whereas each of the other letters contributes its reduced degree of $\deg(a_i)+1$.  

 For each of the above polygons, we can write the product of the generators at the corners to get a Hochschild chain.  In the case we have a hexagon, a Hochschild chain is 
\[
\underline{q_{\beta}^{\alpha}}\otimes p_{\beta}^{\alpha} \otimes q_{\gamma}^{\beta}\otimes p_{\gamma}^{\beta}\otimes q_{\alpha}^{\gamma}\otimes p_{\alpha}^{\gamma}.
\] 
There are 6 choices of such Hochschild chains for each hexagon made by cyclically permuting the generators, and we just need to pick one.  Later on in the proof, we will give more criteria regarding which generator we need to pick to be the distinguished one at the front.  In the case we have a pentagon with the edge $e_{\alpha\beta}$ being unbounded, the Hochschild chain is 
\[
\underline{x^0_{\beta;\alpha}}\otimes q_{\gamma}^{\beta}\otimes p_{\gamma}^{\beta}\otimes q_{\alpha}^{\gamma}\otimes p_{\alpha}^{\gamma},
\]
and similarly for other types of polygons.   

Let $\Phi$ be the Hochschild chain obtained by summing all tensor products, one term each for the front and back of each pair-of-pants in $H$.   We wish to show that the Hochschild differential of $\Phi$ is zero, i.e. it is a Hochschild cycle.   Then, for any generic point in the bounded part of $H$, there is a unique polygon that covers it just once.  So, the open-closed map defined by Abouzaid in his generation criterion takes this Hochschild cycle to the identity element of $SH^0(H)$, which is the generator of the symplectic cohomology given by the sum of all minima of the chosen Hamiltonian in the interior of $H$.  So these objects split-generate by the generation criterion \cite{Ab10}.

We devote the rest of this section to show that $\Phi$ is a Hochschild cycle.  The Hochschild differential of each term $\underline{a_d}\otimes a_{d-1}\otimes\cdots\otimes a_1$ is given by 
\[
\begin{array} {ll}
b(\underline{a_d}\otimes\cdots \otimes a_1)=& \sum_{1\leq i+j < d} \  \underline{a_d}\otimes \cdots\otimes a_{i+j+1}\otimes \mu^{j}(a_{i+j},\ldots, a_{i+1})\otimes a_{i}\otimes\cdots\otimes a_1\\
 &+ \sum_{0\leq i+j< d}\  \mu^{d-j}(a_{i},\ldots, a_1, \underline{a_d},a_{d-1},\ldots,a_{i+j+1})\otimes a_{i+j}\otimes\cdots \otimes a_{i+1}.
\end{array}
\] 
To find the Hochschild differential of $\Phi$, we need to compute products involving sub-expressions among the set of corners described above.  In this section, when we compute the wrapped Floer complexes, we can simplify the computation by using the Hamiltonian in Section \ref{sec: hamiltonian} but without the extra wrappings on the bounded edge, i.e. we let the Hamiltonians $H_n$ be $H_0$, for all $n$, on bounded edges.
 
   For a front hexagon,
\[\begin{array} {ll}
&b(\underline{q_{\beta}^{\alpha}}\otimes p_{\beta}^{\alpha} \otimes q_{\gamma}^{\beta}\otimes p_{\gamma}^{\beta}\otimes q_{\alpha}^{\gamma}\otimes p_{\alpha}^{\gamma})\\

= & \underline{q_{\beta}^{\alpha}}\otimes \mu^5(p_{\beta}^{\alpha}, q_{\gamma}^{\beta}, p_{\gamma}^{\beta},q_{\alpha}^{\gamma},p_{\alpha}^{\gamma} ) \\
  
  & + \mu^5(\underline{q_{\beta}^{\alpha}},p_{\beta}^{\alpha}, q_{\gamma}^{\beta}, p_{\gamma}^{\beta},q_{\alpha}^{\gamma})\otimes p_{\alpha}^{\gamma} 
 + \mu^5(p_{\alpha}^{\gamma}, \underline{q_{\beta}^{\alpha}},p_{\beta}^{\alpha}, q_{\gamma}^{\beta}, p_{\gamma}^{\beta})\otimes  q_{\alpha}^{\gamma} + \cdots (\text{5 of these})\\
 
  &+ \mu^6(\underline{q_{\beta}^{\alpha}}, p_{\beta}^{\alpha}, q_{\gamma}^{\beta}, p_{\gamma}^{\beta},q_{\alpha}^{\gamma},p_{\alpha}^{\gamma} )
         + \mu^6(p_{\alpha}^{\gamma}, \underline{q_{\beta}^{\alpha}}, p_{\beta}^{\alpha}, q_{\gamma}^{\beta}, p_{\gamma}^{\beta},q_{\alpha}^{\gamma}) + \cdots (\text{6 of these}) \\
        
 = & \underline{q_{\beta}^{\alpha}}\otimes (q_{\beta}^{\alpha})^*
   + \left( (\underline{p_{\alpha}^{\gamma}})^*\otimes p_{\alpha}^\gamma + (\underline{q^\gamma_\alpha})^*\otimes q_{\alpha}^{\gamma}
 +(\underline{p_\gamma^\beta})^*\otimes p_\gamma^\beta + (\underline{q_{\gamma}^{\beta}})^*\otimes q_{\gamma}^{\beta}
 +(\underline{p_\beta^\alpha})^*\otimes p_\beta^\alpha\right)\\
 & + e_{L_\alpha}^{\alpha\beta\gamma} + e^{front}_{S_{\alpha\gamma}}+ e_{L_\gamma}^{\alpha\beta\gamma}+ e^{front}_{S_{\gamma\beta}}+ e_{L_\beta}^{\alpha\beta\gamma}+ e^{front}_{S_{\beta\alpha}}.    \end{array}\]
In the above calculation, there are no terms involving $\mu^1, \mu^2, \mu^3,$ and $\mu^4$ because there is simply no holomorphic polygon with less than 6 sides with vertices coming from the above generators.   For each corner (6 total) of a hexagon, we have
\[\mu^5(p_{\beta}^{\alpha}, q_{\gamma}^{\beta}, p_{\gamma}^{\beta},q_{\alpha}^{\gamma},p_{\alpha}^{\gamma} ) =(q_{\beta}^{\alpha})^*\in CF(L_\alpha(0), S_{\beta\alpha}), \]
and so on.   The computation for  $\mu^6$ needs some explanation.   For each side (6 total) of a hexagon, we have 
\[\mu^6(q_{\beta}^{\alpha}, p_{\beta}^{\alpha}, q_{\gamma}^{\beta}, p_{\gamma}^{\beta},q_{\alpha}^{\gamma},p_{\alpha}^{\gamma} )=e_{L_\alpha}^{\alpha\beta\gamma}=dx^0_{\alpha\gamma} \in CF(L_{\alpha}(0), L_{\alpha}(0))\] or
 \[\mu^6(p_{\alpha}^{\gamma}, q_{\beta}^{\alpha}, p_{\beta}^{\alpha}, q_{\gamma}^{\beta}, p_{\gamma}^{\beta},q_{\alpha}^{\gamma})=e_{S_{\alpha\gamma}}^{front}\in CF^0(S_{\alpha\gamma}, S_{\alpha\gamma})\]
 and so on for the cyclic permutations. Generators in $CF^0(L_\alpha(0), L_\alpha(0))$ correspond to the maxima of the existing Hamiltonian.  To define $CF^0(S_{\alpha\gamma},S_{\alpha\gamma})$, we need to pick an additional auxiliary Hamiltonian to perturb $S_{\alpha\gamma}$.  We can pick this Hamiltonian to have exactly one minimum in each of the front and back of the pair of pants, though it doesn't really matter as we can just sum up the minima of this Hamiltonian in the region.  
   
 Similarly, for the corresponding hexagon on the back of the same pair-of-pants,
\[\begin{array} {ll}
&b\left((\underline{q_{\beta}^{\alpha}})^*\otimes (p_{\alpha}^{\gamma})^* \otimes (q^{\gamma}_{\alpha})^*\otimes (p_{\gamma}^{\beta})^*\otimes (q_{\gamma}^{\beta})^*\otimes (p_{\beta}^{\alpha})^*\right)\\

 = & (\underline{q_{\beta}^{\alpha}})^*\otimes q_{\beta}^{\alpha}+\left(
 \underline{p_{\beta}^{\alpha}}\otimes (p_{\beta}^\alpha)^* +\underline{q_\gamma^\beta}\otimes (q_{\gamma}^{\beta})^* + \underline{p_\gamma^\beta}\otimes (p_{\gamma}^{\beta})^*+ \underline{q^\gamma_\alpha}\otimes (q^{\gamma}_\alpha)^*+ \underline{p^\gamma_\alpha}\otimes (p^{\gamma}_\alpha)^*\right)\\
 &  +e^{back}_{S_{\beta\alpha}}+ e_{L_\beta}^{\alpha\beta\gamma} + e^{back}_{S_{\gamma\beta}}  + e_{L_\gamma}^{\alpha\beta\gamma} +e^{back}_{S_{\alpha\gamma}} + e_{L_\alpha}^{\alpha\beta\gamma} \end{array}\]
    
 \noindent For the adjacent hexagon in the front, if we choose the same corner as the above to be the distinguished generator, then
 \[\begin{array}{ll}
 & b\left(\underline{q^{\beta}_{\alpha}}\otimes p_{\alpha}^{\beta} \otimes q_{\eta}^{\alpha}\otimes p_{\eta}^{\alpha}\otimes q_{\beta}^{\eta}\otimes p_{\beta}^{\eta}\right) 
 = b\left((\underline{q_{\beta}^{\alpha}})^*\otimes (p_{\beta}^{\alpha} )^*\otimes q_{\eta}^{\alpha}\otimes p_{\eta}^{\alpha}\otimes q_{\beta}^{\eta}\otimes p_{\beta}^{\eta}\right)\\
 = & (\underline{q_{\beta}^{\alpha}})^*\otimes q_{\beta}^{\alpha} +\left( \underline{p_{\beta}^{\alpha}}\otimes (p_{\beta}^\alpha)^* +\cdots\right)+ e_{L_\alpha}^{\alpha\beta\eta}+ e_{\beta\alpha}^{front} + e_{L_\beta}^{\alpha\beta\eta}+\cdots.
 \end{array} \]  If we change the choice of the distinguished generator to a different corner for the adjacent hexagon in the front, we get, for example
 \[\begin{array}{ll}
 & b\left(\underline{q_{\eta}^{\alpha}}\otimes p_{\eta}^{\alpha}\otimes q_{\beta}^{\eta}\otimes p_{\beta}^{\eta}\otimes q^{\beta}_{\alpha}\otimes p_{\alpha}^{\beta} \right) 
 = b\left(\underline{q_{\eta}^{\alpha}}\otimes p_{\eta}^{\alpha}\otimes q_{\beta}^{\eta}\otimes p_{\beta}^{\eta}\otimes (q_{\beta}^{\alpha})^*\otimes (p_{\beta}^{\alpha} )^*\right)\\
 = & \underline{q_{\eta}^{\alpha}}\otimes (q_{\eta}^{\alpha})^* +\left(\underline{p_{\beta}^{\alpha}}\otimes (p_{\beta}^\alpha)^*+ \underline{q_{\beta}^{\alpha}}\otimes (q_{\beta}^\alpha)^*+\cdots\right)+ e_{L_\alpha}^{\alpha\beta\eta}+ e_{\beta\alpha}^{front} + e_{L_\beta}^{\alpha\beta\eta}+\cdots.
 \end{array} \]  
 
We need to make sure to let the same corner be the distinguished generator for both the front and the back of each pair-of-pants.  For example, for the Hochschild chain $\underline{q_{\beta}^{\alpha}}\otimes p_{\beta}^{\alpha} \otimes q_{\gamma}^{\beta}\otimes p_{\gamma}^{\beta}\otimes q_{\alpha}^{\gamma}\otimes p_{\alpha}^{\gamma}$, the corner of the hexagon corresponding to $\underline{q_\beta^\alpha}$ is the distinguished generator.  So for the hexagon in the back, we need to let the same corner $(\underline{q_\beta^\alpha})^*$ be the distinguished generator as we did in the calculation above.  After making this choice, all the terms will cancel in the sum and thus $b\Phi=0$.  The cancellations happen in the following ways (note that the Floer complexes are $\mathbb Z_2$ graded):
\begin{itemize}
\item The $e_{L_\alpha}^{\alpha\beta\gamma}$ term from the front hexagon cancels  the same term from the back hexagon, and so on for other terms of this type.   

\item For each $S_{\alpha\beta}$, the term $e_{S_{\alpha\beta}}^{front}$ from the front hexagon cancels the same term from the adjacent hexagon in the front.  

\item    For the front hexagon, consider the term $\underline{q_\beta^\alpha}\otimes (q_\beta^\alpha)^*$, which involves the distinguished generator. If the same corner is further chosen as the distinguished one for the polygons coming from the adjacent pair-of-pants, then $\underline{q_\beta^\alpha}\otimes (q_\beta^\alpha)^*$ cancels  with the same term coming from the adjacent polygon in the front; otherwise, $\underline{q_\beta^\alpha}\otimes (q_\beta^\alpha)^*$ cancels the same term coming from the adjacent polygon in the back.

\item For the front hexagon,  the term $(\underline{p_\beta^\alpha})^* \otimes p_\beta^\alpha$, which does not involve the distinguished generator, cancels the same term coming from adjacent polygon in the back.    The calculation above shows the same situation when the term $\underline{p_\beta}^\alpha\otimes(p_\beta^\alpha)^*$ from the back hexagon cancels the same term coming from the adjacent polygon in the front, and so on for the other terms of this type.
\end{itemize}

For those pair-of-pants that has at least one unbounded edge, such as when we have a pentagon, 
\[
\begin{array}{ll}
& b(\underline{x^0_{\beta;\alpha}}\otimes q_{\gamma}^{\beta}\otimes p_{\gamma}^{\beta}\otimes q_{\alpha}^{\gamma}\otimes p_{\alpha}^{\gamma})\\
= &\cancel{ \underline{x^0_{\beta;\alpha}}\otimes \mu^4(q_{\gamma}^{\beta}\otimes p_{\gamma}^{\beta}\otimes q_{\alpha}^{\gamma}\otimes p_{\alpha}^{\gamma})}  +\left(\mu^4(\underline{x^0_{\beta;\alpha}}\otimes q_{\gamma}^{\beta}\otimes p_{\gamma}^{\beta}\otimes q_{\alpha}^{\gamma}) \otimes p_{\alpha}^\gamma+\cdots (\text{4 of these})\right)+ \\
& \left( \mu^5(\underline{x^0_{\beta;\alpha}}\otimes q_{\gamma}^{\beta}\otimes p_{\gamma}^{\beta}\otimes q_{\alpha}^{\gamma}\otimes p_{\alpha}^{\gamma})+\cdots (\text{5 of these})\right)\\
= & \left( (\underline{p_{\alpha}^\gamma})^*\otimes p_\alpha^\gamma+\cdots\right) +x^0_{\beta;\alpha} + e_{L_\alpha}^{\alpha\beta\gamma} + e_{S_{\alpha\gamma}}^{front}+\cdots, \end{array}
\]
where the term $\mu^4(q_{\gamma}^{\beta}\otimes p_{\gamma}^{\beta}\otimes q_{\alpha}^{\gamma}\otimes p_{\alpha}^{\gamma})$ is zero because the Reeb cord $x^0_{\beta,\alpha}$ that lies in this pentagon is in the wrong direction to be an output.    So, for each pair-of-pants that has at least one unbounded leg, it is most convenient to assign the distinguished generators for that pair-of-pants to be the Reeb cords in the cylindrical end.
\end{proof}

\section{A global cohomology level computation} \label{app: pedestrian computation}
This appendix will not be part of the version to be submitted to a journal.  We compute the wrapped Fukaya category $\mathcal W(H)$ and the category of singularity $D^b_{sg}(W^{-1}(0))$ at the level of cohomology, using methods very similar to those in \cite{AAEKO13}.  Aside from using a few earlier notations, this appendix is self contained, independent from the sheaf theoretic computation in the rest of the paper.  We show it because it is a straightforward demonstration of HMS, though we do not know how to extend this to compute the higher $A_\infty$-structures.  We will list the generators of the morphism complexes for both categories, but we will be very brief in discussing the product structures on the morphism complexes because this computation is not the main point of this paper.  
  
\subsection {The wrapped Fukaya category.}  We use Abouzaid's model of the wrapped Fukaya category \cite{Ab10}, with only a Hamiltonian perturbation $H:M\to \mathbb R$ that is quadratic in the cylindrical ends.  Then the wrapped Floer complex $CW^*(L_\alpha(k),L_\beta(l))$ is generated by the Reeb chords that are the time-1 trajectories of the Hamiltonian flow from $L_\alpha(k)$ to $L_\beta(l)$.   Equivalently, up to a change of almost-complex structure, $CW^*(L_\alpha(k),L_\beta(l))=\langle\phi^1(L_\alpha(k))\cap L_\beta(l)\rangle$.  The product $\mu^2: CW^*(L_\beta(j),L_\gamma(l))\otimes CW^*(L_\alpha(j), L_\beta(k)) \to CW^*(L_\alpha(j), L_\gamma(l))$ is equivalent to the usual Floer product
$CF^*(\phi^1(L_\beta(j)), L_\gamma(l))\otimes CF^*(\phi^2(L_\alpha(j)), \phi^1(L_\beta(k))\to CF^*(\phi^2(L_\alpha(j)), L_\gamma(l)),$
which counts holomorphic triangles with boundaries on $\phi^2(L_\alpha(j))$, $\phi^1(L_\alpha(k))$, and $L_\alpha(l)$.  The inputs are $\phi^1(p_1)\in \phi^2(L_\alpha(j))\cap \phi^1(L_\beta(k))$ and $p_2\in \phi^1(L_\beta(k))\cap L_\alpha(l)$, and the output is $\tilde q\in \phi^2(L_\alpha(j))\cap L_\alpha(l)$, which corresponds to $q\in \phi^1(L_\alpha(j))\cap L_\alpha(l)$ by the rescaling method explained in \cite{Ab10}.  

\subsubsection{Lagrangians from the same component.}\ \\
 First, we list the generators of $CW^*(L_\alpha(k), L_\alpha(l))$; see Fig. \ref{C1} and \ref{C2}.  There are two cases. 

 \emph{Case 1: on a bounded edge $e_{\alpha\beta}$}.  For $k<l$, $L_\alpha(k)$ intersects $L_\alpha(l)$ at $n_{\alpha\beta}(l-k)+1$ points on $e_{\alpha\beta}$, all of which have degree 0.  We label each intersection point sequentially by $x_{\alpha\beta}^{n_{\alpha\beta}(l-k)-j}y_{\alpha\beta}^j$, for $j=0,\ldots, n_{\alpha\beta}(l-k)$.  If $e_{\alpha\beta}$ is adjacent to another bounded edge $e_{\alpha\eta}$, then there is a generator that is an interior intersection point at the joint of these two edges labeled by $x_{\alpha\beta}^{n_{\alpha\beta}(l-k)}$ on one side and $y_{\alpha\eta}^{n_{\alpha\eta}(l-k)}$ on the other side; we identify them $x_{\alpha\beta}^{n_{\alpha\beta}(l-k)}=y_{\alpha\eta}^{n_{\alpha\eta}(l-k)}$.   For $k> l$, $L_\alpha(k)$ intersects $L_\alpha(l)$ at $n_{\alpha\beta}(k-l)-1$ points of degree 1, and we label them sequentially by $\left(x_{\alpha\beta}^{n_{\alpha\beta}(k-l)-2-j}y_{\alpha\beta}^j\right)^*$, for $j=0,\ldots, n_{\alpha\beta}(k-l)-2$.  When $e_{\alpha\beta}$ is adjacent to another bounded edge $e_{\alpha\eta}$, we get an extra degree 1 generator that is an interior intersection point at the joint of these two edges.  Again, we identify the two labels coming from both sides.     

 \emph{Case 2: on an unbounded edge $e_{\alpha\gamma}$}.  Let $e_{\alpha\eta}$ be the adjacent edge. If $k<l$, then  $\phi^1(L_\alpha(k))$ and $L_\alpha(l)$ have one interior intersection point at the joint of $e_{\alpha\gamma}$ and $e_{\alpha\eta}$, which we label by $x_{\alpha\gamma}^0$, and we label the infinitely many purturbed intersections points on $e_{\alpha\gamma}$ successively by $x_{\alpha\gamma}^j$ for $j=1,2,\ldots$.  Like before, we need to set $x_{\alpha\gamma}^0$ to equal the label for the same generator coming from the edge $e_{\alpha\eta}$, i.e.   $x_{\alpha\gamma}^0=x_{\alpha\eta}^0$ if $e_{\alpha\eta}$ is unbounded, and  $x_{\alpha\gamma}^0=y_{\alpha\eta}^{n_{\alpha\eta}(l-k)}$ (or $=x_{\alpha\eta}^{n_{\alpha\beta}(l-k)}$ depending on which side $e_{\alpha\gamma}$ is attached to $e_{\alpha\beta}$) if $e_{\alpha\eta}$ is bounded. If $k>l$, then there is no interior intersection point, only infinitely many perturbed intersection points labeled by $x_{\alpha\gamma}^j$ for $j=1,2,\ldots$.  

\begin{figure}[h]
\centering
	\scalebox{0.8}{\includegraphics{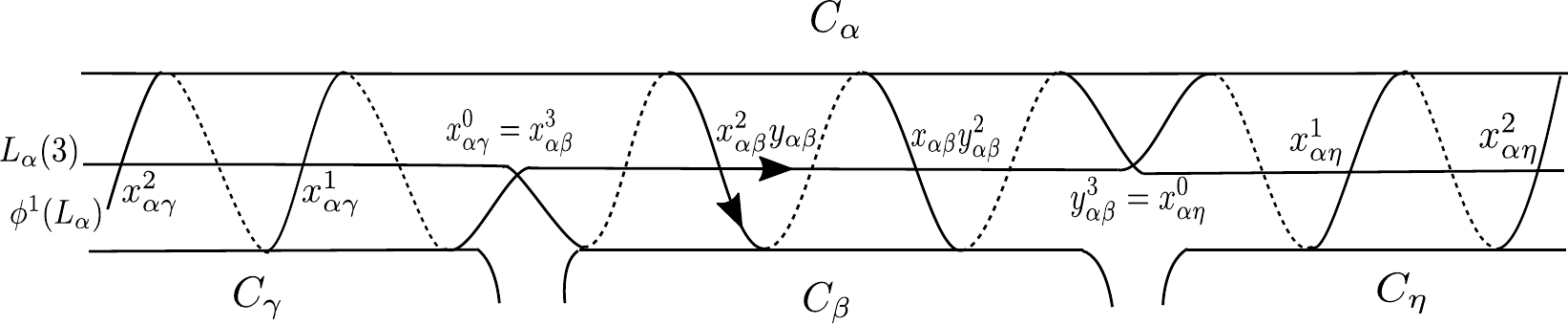}}
	\caption{Generators of $CW^*(L_\alpha, L_\alpha(3))$ assuming $n_{\alpha\beta}=1$.}
	\label{C1}
\end{figure}

\begin{figure}[h]
\centering
	\scalebox{0.8}{\includegraphics{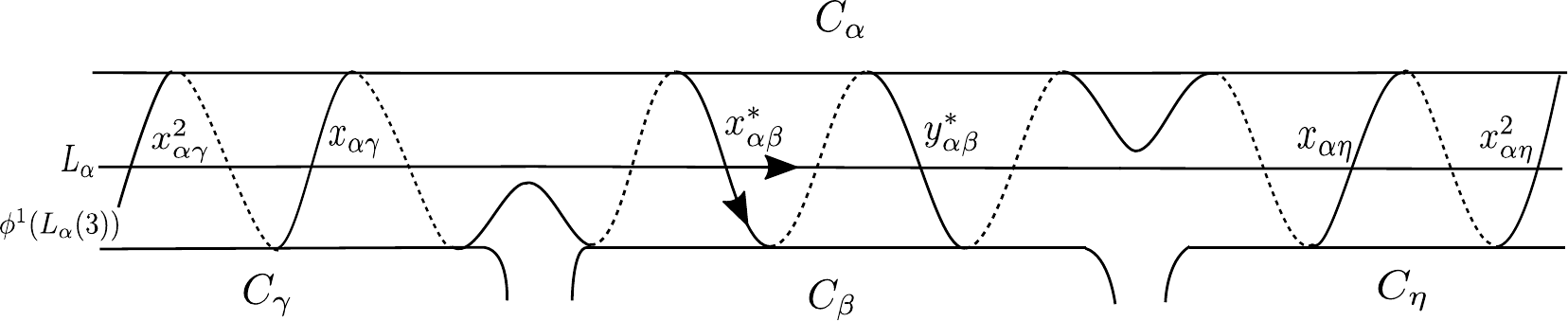}}
	
	\caption{Generators of $CW^*(L_\alpha(3), L_\alpha)$ assuming $n_{\alpha\beta}=1$. The generators $x_{\alpha\beta}^*$ and $y_{\alpha\beta}^*$ are of degree 1.  Intersection points on unbounded edges are always of degree 0. }
	\label{C2}
\end{figure}

As for the products $CW^*(L_\alpha(k),L_\alpha(l))\otimes CW^*(L_\alpha(j), L_\alpha(k))\to CW^*(L_\alpha(j),L_\alpha(l))$, there are no holomorphic triangles with vertices in more than one edge.   The product $p_2\cdot p_1=q$ needs to satisfy $\deg (p_1)+\deg(p_2)=\deg (q) (\text{mod }2)$ where the grading is by $\mathbb Z_2$, so there are just the three possibilities listed below.  The formulas for the products are obtained by counting holomorphic triangles.

\noindent $\bullet$  $CW^0(L_\alpha(k),L_\alpha(l))\otimes CW^0(L_\alpha(j), L_\alpha(k))\to CW^0(L_\alpha(j),L_\alpha(l))$, with $x_{\alpha\beta}^{p'}y_{\alpha\beta}^{q'}\cdot x_{\alpha\beta}^py_{\alpha\beta}^q=x_{\alpha\beta}^{p+p'}y_{\alpha\beta}^{q+q'}$.  For generators on a bounded edge, such a product is only possible when $j\leq k \leq l$.  

\noindent $\bullet$ $CW^0(L_\alpha(k),L_\alpha(l))\otimes CW^1(L_\alpha(j), L_\alpha(k))\to CW^1(L_\alpha(j),L_\alpha(l))$, $k<l<j$, with $x_{\alpha\beta}^{p'}y_{\alpha\beta}^{q'}\cdot (x_{\alpha\beta}^py_{\alpha\beta}^q)^*=(x_{\alpha\beta}^{p-p'}y_{\alpha\beta}^{p-p'})^*$.

\noindent $\bullet$ $CW^1(L_\alpha(k),L_\alpha(l))\otimes CW^0(L_\alpha(j), L_\alpha(k))\to CW^1(L_\alpha(j),L_\alpha(l))$, $l<j<k$, with $(x_{\alpha\beta}^{p'}y_{\alpha\beta}^{q'})^*\cdot x_{\alpha\beta}^py_{\alpha\beta}^q=(x_{\alpha\beta}^{p'-p}y_{\alpha\beta}^{p'-p})^*$.

\subsubsection {Lagrangians from two adjacent components.}  In this section, we focus on two adjacent components $C_\alpha$ and $C_\beta$. Recall that on the edge $e_{\alpha\beta}$, $L_{\alpha}(k)$ and $L_\beta(l)$ have the opposite orientations.  When the edge $e_{\alpha\beta}$ is unbounded, we do the same as in \cite{AAEKO13}; we label the infinite sequence of odd degree intersection points by $x^j_{\alpha;\beta}$, for $j=0,1,2\ldots$. 

On a bounded edge $e_{\alpha\beta}$, if $k<l$, then $L_\alpha(k)$ and $L_\beta(l)$ intersect at $n_{\alpha\beta}(l-k)+d_{\alpha,\beta}+1$ points of degree 1, and we label them by $x_{\alpha;\beta}^{n_{\alpha\beta}(l-k)+d_{\alpha,\beta}-j}y_{\alpha;\beta}^j$ for $j=0,\ldots n_{\alpha\beta}(l-k)+d_{\alpha,\beta}$.  For $k>l$, $L_\alpha(k)$ and $L_\beta(l)$ intersect at $n_{\alpha\beta}(k-l)+d_{\alpha,\beta}+1$ points of degree 0, and we label them by $\left(x_{\alpha;\beta}^{n_{\alpha\beta}(k-l)+2+d_{\alpha,\beta}-j}y_{\alpha;\beta}^j\right)^*$ for $j=0,\ldots, n_{\alpha\beta}(k-l)+d_{\alpha,\beta}$.   See Figure \ref{C3}.

\begin{figure}[h]
\centering
	\scalebox{0.6}{\includegraphics{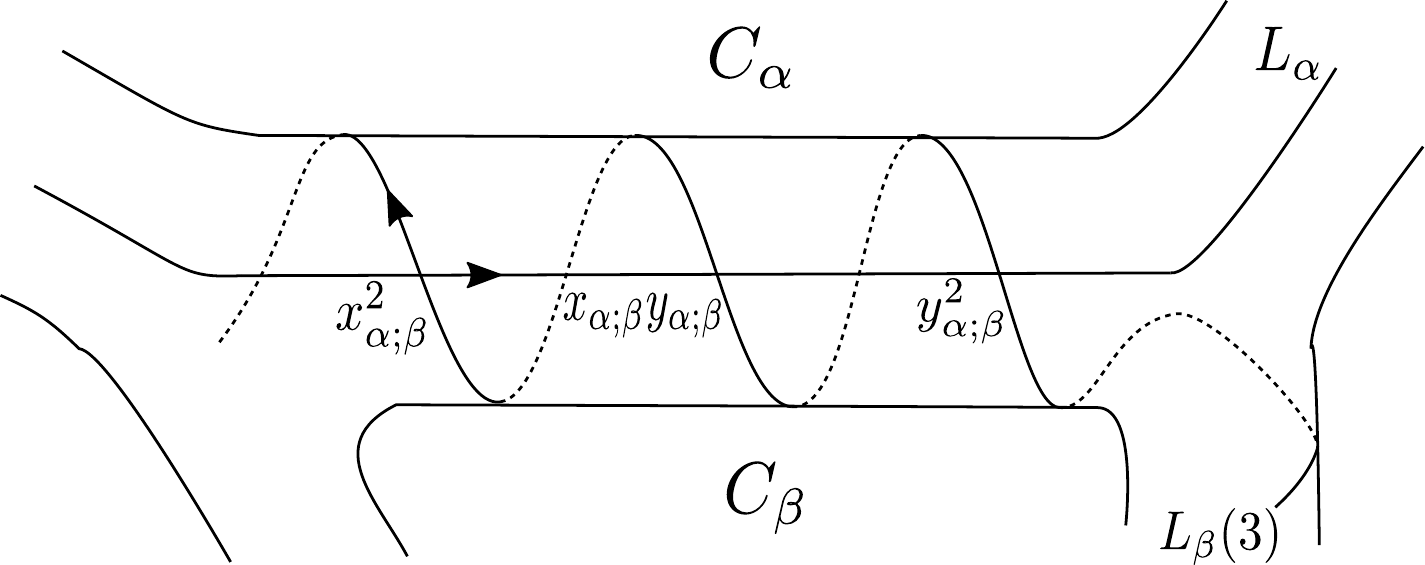}}
	\caption{Intersections of $L_\alpha$ and $L_\beta(3)$ on a bounded edge $e_{\alpha\beta}$; assuming $n_{\alpha\beta}=1$, $d_{\alpha,\beta}=-1$.}
	\label{C3}
\end{figure}

 Assuming $j<k<l$, we have the following products: 

\noindent $\bullet$  $CW^1(L_\alpha(k),L_\beta(l))\otimes CW^0(L_{\alpha}(j),L_\alpha(k))\to CW^1(L_\alpha(j), L_\beta(l))$, with  $x_{\alpha;\beta}^{p'}y_{\alpha;\beta}^{q'}\cdot x_{\alpha\beta}^{p}y_{\alpha\beta}^{q} = x_{\alpha;\beta}^{p+p'}y_{\alpha;\beta}^{q+q'}$.

\noindent $\bullet$ $CW^0(L_{\beta}(k),L_\beta(l))\otimes CW^1(L_\alpha(j),L_\beta(k)) \to CW^1(L_\alpha(j), L_\beta(l))$, with $x_{\alpha\beta}^{p'}y_{\alpha\beta}^{q'}\cdot x_{\alpha;\beta}^{p}y_{\alpha\beta}^{q} = x_{\alpha;\beta}^{p+p'}y_{\alpha;\beta}^{q+q'}$.  

\noindent $\bullet$ $CW^1(L_\beta(k), L_\alpha(l)) \otimes CW^1(L_\alpha(j),L_\beta(k))\to CW^0(L_\alpha(j),L_\alpha(l))$, with $x_{\beta;\alpha}^{p'} y_{\beta;\alpha}^{q'}\circ x_{\alpha;\beta}^py_{\alpha;\beta}^q=x_{\alpha\beta}^{p+p'+1} y_{\alpha\beta}^{q+q'+1}$.

\noindent There are three other products by interchanging $\alpha$ and $\beta$ above.  There are additional products of a similar nature when we change the assumption of $j<k<l$ to other orders.

\subsubsection {Lagrangians from three adjacent components.}
\noindent Suppose a holomorphic triangle is bounded by Lagrangians from three different components $C_\alpha, C_\beta, C_\gamma$.  The boundary of this triangle can be decomposed into three arcs, each arc only winding around one leg of the pair of pants.  Pick a point in the center of the triangle, then joining these three arcs to this center point form three loops $\gamma_1$, $\gamma_2$, and $\gamma_3$.  The fundamental group of the pair of pants is a free group on two generators $a$ and $b$, where $a$ is a loop around one leg of the pair of pants, $b$ is a loop around another leg, and $ab$ is a loop around the third leg.  Hence $\gamma_1=a^p$, $\gamma_2=b^q$, and $\gamma_3=(ab)^r$.  However $\gamma_1\cdot \gamma_2=\gamma_3$, hence we must have $p=q=r=1$ or $p=q=r=0$.   Hence, the vertices of such a holomorphic triangle in a pair-of-pants cannot be an intersection point that is further down any edge than the first one.

\subsection{Category of singularities of the Landau-Ginzburg mirror}  We describe the triangulated category of singularities $D_{sg}(D)$ of $D=W^{-1}(0)$.   
As in \cite{Or04}, 
\[\Hom_{D_{sg}}(\Os_{D_\alpha}(k), \Os_{D_\beta}(l)[n])\cong \Ext_D^n(\Os_{D_\alpha}(k), \Os_{D_\beta}(l)),\] for $n>\dim D=2$.  
The morphisms only depend on whether the degree $n$ is even or odd, so calculating Ext's for $n>2$ is enough to determine the morphisms.

Assuming $D_{\alpha\beta}:=D_\alpha\cap D_\beta\neq \emptyset$, we get a 2-periodic resolution of $\Os_{D_\alpha}(k)$ on $D$ by locally free sheaves, 
\[\{\cdots \to \Os_D(k) \stackrel{}{\rightarrow} \Os_D(-D_\alpha)(k) \stackrel{}{\rightarrow} \Os_D(k)\}\to\Os_{D_\alpha}(k)\to 0.\]
We can replace $\Os_{D_\alpha}(k)$ with the above 2-periodic complex, and $\sHom(-, \Os_{D_\beta}(l))$ maps this complex to
\[0\to \Os_{D_\beta}(l-k)\stackrel{\phi_{\alpha\beta}}{\rightarrow}\Os_{D_\beta} (D_\alpha)(l-k)\stackrel{\psi_{\alpha\beta}}{\rightarrow}\Os_{D_\beta}(l-k)\to \cdots.\]  We get $\Ext_D^n(\Os_{D_\alpha}(k), \Os_{D_\beta}(l))$ as the hypercohomology groups of this complex.

\subsubsection  {Morphisms $\Hom_{D_{sg}}(\Os_{D_\alpha}(k), \Os_{D_\alpha}(l)[n])$.} 
In this case $\phi_{\alpha\alpha}=0$, and $\psi_{\alpha\alpha}$ is isomorphic to the injective map $\Os_{D_\alpha}(-H_{\alpha})(l-k)\to \Os_{D_{\alpha}}(l-k)$ with its cokernel being $\Os_{H_{\alpha}}(l-k)$, where $H_\alpha=\bigcup_\beta D_{\alpha\beta}$.  
Hence for $n>2$, $\Ext_D^{even}(\Os_{D_\alpha}(k), \Os_{D_\alpha}(l))=H^0(\Os_{H_\alpha}(l-k))$ and $\Ext_D^{odd}(\Os_{D_\alpha}(k), \Os_{D_\alpha}(l))=H^1(\Os_{H_\alpha}(l-k))$. So \[\Hom_{D_{sg}}(\Os_{D_\alpha}(k),\Os_{D_\alpha}(l)[*])\cong H^*(\Os_{H_\alpha}(l-k)).\]

First, suppose $D_\alpha$ is unbounded.  If $H_\alpha=\mathbb C\cup \mathbb C$, then $H^0(\Os_{H_\alpha}(l-k))=\mathbb C[x]\oplus \mathbb C[y]/(x^0=y^0)$ and $H^1(\Os_{H_\alpha}(l-k))=0$.  Otherwise, $H_\alpha$ consists of two $\mathbb C^1$'s connected by one or more $\mathbb P^1$'s, as illustrated in Fig. \ref{divisor}.  We denote the two $\mathbb C^1$'s by $C'$ and $C''$, and the connecting sequence of $\mathbb P^1$'s by $C_1,...,C_m$, and $C=\bigcup_{j=1}^m C_j$.  Then $H_\alpha=C\cup C'\cup C''$ and there is an exact sequence 
\[0\to H^0(\Os_{C\cup C'\cup C''}(l-k))\stackrel{\cong}{\longrightarrow} H^{0}(\Os_C(l-k))\oplus H^0(\Os_{C'\cup C''}) \twoheadrightarrow H^0(\Os_{p\cup p''})\hspace{1in}\]
\[\hspace{2.2in}\stackrel{0}{\longrightarrow} H^1(\Os_{C\cup C'\cup C''}(l-k))\stackrel{\cong}{\longrightarrow} H^1(\Os_C(l-k))\to 0.\]

\begin{figure}[h]
\centering
	\scalebox{0.7}{\includegraphics{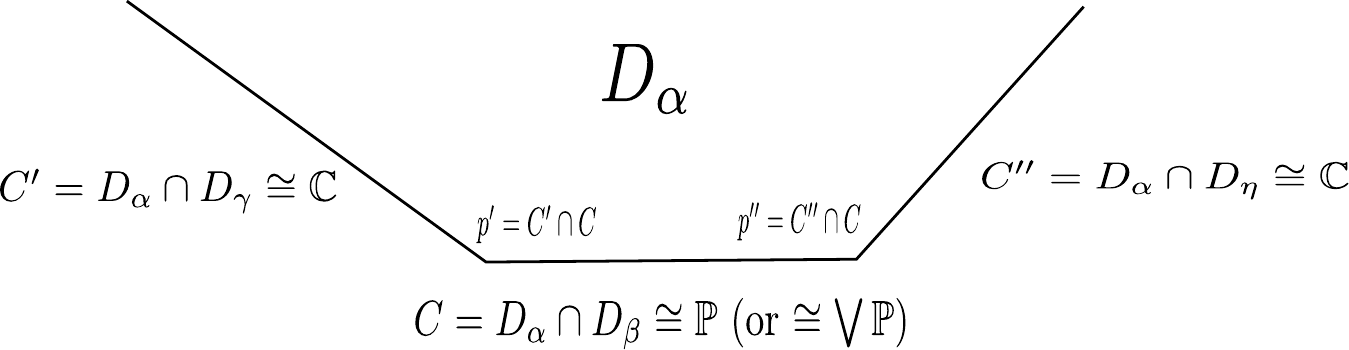}}
	\caption{$H_\alpha=C\cup C'\cup C''$}
	\label{divisor}	
\end{figure} 

\noindent When $l-k\geq 0$, \[H^0(\Os_C(l-k))\cong \bigoplus_{j=1}^m H^0(\Os_{C_j}(l-k))/\sim\  \cong \bigoplus_{j=1}^m \mathbb C[x_j: y_j]^{n_j(l-k)}/\sim,\]
\[H^1(\Os_C(l-k))=0,\] 
where $(x_j:y_j)$ are homogenous coordinates.  The equivalence relation above identifies $y_j^{n_j(l-k)}\in H^0(\Os_{C_j}(l-k))$ and $x_{j+1}^{n_{j+1}(l-k)}\in H^0(\Os_{C_{j+1}}(l-k))$ with each other and with the section of $\Os_C(l-k)$ obtained by considering these two polynomials together on $C_j\cup C_{j+1}$. In this notation, if $C_j=D_\alpha\cap D_{\beta_j}$, then we are identifying $x_j\sim x_{\alpha\beta_j}$, $y_j \sim y_{\alpha\beta_j}$, and $n_j\sim n_{\alpha\beta_j}$.  Suppose $C'=D_\alpha\cap D_\gamma$ and $C''=D_\alpha\cap D_\eta$, then $H^0(\Os_{C'\cup C''})=\mathbb C[x_{\alpha\gamma}]\oplus \mathbb C[x_{\alpha\eta}].$

\noindent When $l-k<0$, $H^0(\Os_C(l-k))=0$, 
$H^0(\Os_{C'\cup C''})=\mathbb C[x_{\alpha\gamma}]\oplus \mathbb C[x_{\alpha\eta}]$,  and
\[H^1(\Os_{C_j}(l-k))=\left(\mathbb C[x_j: y_j]^{n_j(k-l)-2}\right)^*.\]
(When $n_j(l-k)=-1$, $H^1(\Os_{C_j}(l-k))=0$.) Then $H^1(\Os_{C}(l-k))$ is a direct sum of all the $H^1(\Os_{C_j}(l-k))$ for $j=1,\ldots, m$ plus $m-1$ extra generators associated to each vertex $C_j\cap C_{j+1}$. 
Note by Serre duality, $H^1(\mathbb P^1,\Os(n(l-k)))\cong H^0(\mathbb P^1,\Os(n(k-l)-2))^*$, with $(x_j^py_j^q)^*=\frac{dx}{x_j^{p+1}y_j^q}=-\frac{dy}{x^py^{q+1}}$, where $p+q=n(k-l)-2$.

 Comparing this with the wrapped Floer cohomology, we get \[\Hom^*(L_\alpha(k),L_\alpha(l))\cong \Hom_{D_{sg}}(\Os_{D_\alpha}(k), \Os_{D_\alpha}(l)[*]).\]

\subsubsection{Morphisms $\Hom_{D_{sg}}(\Os_{D_\alpha}(k), \Os_{D_\beta}(l)[n])$, $\alpha\neq \beta$.}  In this case $\psi_{\alpha\beta}=0$, and $\phi_{\alpha\beta}$ is isomorphic to the injective map $\Os_{D_{\beta}}(l-k)\to\Os_{D_\beta}(D_\alpha)(l-k)$ with its cokernel being $\Os_{D_{\alpha\beta}}(D_\alpha)(l-k)$.   Hence for $n>2$, $\Ext_D^{even}(\Os_{D_\alpha}(k), \Os_{D_\beta}(l))=H^1(\Os_{D_{\alpha\beta}}(D_\alpha)(l-k))$ and $\Ext_D^{odd}(\Os_{D_\alpha}(k), \Os_{D_\beta}(l))=H^0(\Os_{D_{\alpha\beta}}(D_\alpha)(l-k))$.  So \[\Hom_{D_{sg}}(\Os_{D_\alpha}(k),\Os_{D_\beta}(l)[*+1])\cong H^*(\Os_{D_{\alpha\beta}}(D_\alpha)(l-k)).\]  
\indent If $D_{\alpha\beta}$ is unbounded, then $D_{\alpha\beta}=\mathbb C[x]$ and $H^*(\Os_{D_{\alpha\beta}}(D_\alpha)(l-k))\cong H^0(\Os_{\mathbb C})\cong \mathbb C$.  If $D_{\alpha\beta}$ is bounded and  $D_{\alpha\beta} \cong \mathbb P^1$, then $\Os_{D_{\alpha\beta}}(D_\alpha)\cong \Os_{\mathbb P^1}(d_{\alpha,\beta})$, where $d_{\alpha,\beta}=\deg\Os(D_\alpha)|_{D_{\alpha\beta}}$,  and \[H^*(\Os_{D_{\alpha\beta}}(D_\alpha)(l-k))=H^*(\Os_{\mathbb P^1}(n_{\alpha\beta}(l-k)+d_{\alpha,\beta})).\]

  \noindent For $n_{\alpha\beta}(l-k)\geq -d_{\alpha,\beta}$, 
\[H^0(\Os_{\mathbb P^1}(n_{\alpha\beta}(l-k)+d_{\alpha,\beta}))\cong \mathbb C[x_{\alpha;\beta}:y_{\alpha;\beta}]^{n_{\alpha\beta}(l-k)+d_{\alpha,\beta}},\]
\[H^1(\Os_{\mathbb P^1}(n_{\alpha\beta}(l-k)+d_{\alpha,\beta}))=0.\]
For $n_{\alpha\beta}(l-k)=-d_{\alpha,\beta}-1$, \[H^0(\Os_{\mathbb P^1}(n_{\alpha\beta}(l-k)+d_{\alpha,\beta}))=H^1(\Os_{\mathbb P^1}(n_{\alpha\beta}(l-k)+d_{\alpha,\beta}))=0.\]  
For $n_{\alpha\beta}(l-k)<-d_{\alpha,\beta}-1$, 
\[H^0(\Os_{\mathbb P^1}(n_{\alpha\beta}(l-k)+d_{\alpha,\beta}))=0,\]
\[H^1(\Os_{\mathbb P^1}(n_{\alpha\beta}(l-k)+d_{\alpha,\beta}))=\left(\mathbb C[x_{\alpha;\beta}:y_{\alpha;\beta}]^{n_{\alpha\beta}(k-l)+d_{\alpha,\beta}+2}\right)^*.\]

Comparing with wrapped Floer cohomology, we get 
\[\Hom^*(L_\alpha(k),L_\beta(l))\cong \Hom_{D_{sg}}(\Os_{D_\alpha}(k), \Os_{D_\beta}(l)[*]).\]


\begin{thebibliography}{99}

\bibitem [Ab06]{Ab06}  Mohammed Abouzaid, \emph{Homogeneous coordinate rings and mirror symmetry for toric varieties.} Geom. Topol. 10 (2006), 1097-1157, arXiv:0511644.


\bibitem [Ab10]{Ab10} Mohammed Abouzaid, \emph{A geometric criterion for generating the Fukaya category.} Publ. Math. Inst. Hautes \'Etudes Sci. No. 112 (2010), 191-240, arXiv:1001.4593.


\bibitem [Ab14]{Ab14} Mohammed Abouzaid, \emph{Family Floer cohomology and mirror symmetry,} to appears in Proceedings of the International Congress of
Mathematicians (Seoul, 2014), arXiv:1404.2659. 
  

\bibitem [AA]{AA} Mohammed Abouzaid, Denis Auroux, in preparation.  

\bibitem [AAEKO13]{AAEKO13} Mohammed Abouzaid, Denis Auroux, Alexander {Efimov}, Ludmil {Katzarkov}, Dimitri {Orlov}, \emph{Homological mirror symmetry for punctured
spheres,} J. Amer. Math. Soc. 26 (2013), no. 4, 1051-1083, arXiv:1103.4322.

\bibitem [AAK16]{AAK12}  Mohammed Abouzaid, Denis {Auroux}, Ludmil {Katzarkov}, \emph{Lagrangian fibrations on blowups of toric varieties and mirror symmetry for hypersurfaces,} Publ. Math. IHES 123 (2016), 199-282, arXiv 1205.0053.

\bibitem [AS10]{AS10} Mohammed Abouzaid, Paul {Seidel}, \emph{An open string analogue of Viterbo functoriality,} Geom. Topol. 14 (2010), no. 2, 627-718, arXiv:0712.3177.


\bibitem [Au10]{Au10} Denis Auroux, \emph{Fukaya categories of symmetric products and bordered Heegaard-Floer homology,} J. G\"{o}kova Geom. Topol. GGT 4 (2010), 1-€"54. 

\bibitem [Au14]{Au13} Denis Auroux, \emph{A beginner's introduction to Fukaya categories.} Contact and symplectic topology, 85-136, Bolyai Soc. Math. Stud., 26, J\`{a}nos Bolyai Math. Soc., Budapest, 2014, arXiv:1301.7056.

\bibitem [Ba94]{Ba94} Victor Batyrev, \emph{Dual polyhedra and mirror symmetry for Calabi-Yau hypersurfaces in toric varieties}, J. Algebraic Geom. 3 (1994), 493-535.

\bibitem [Bo16]{Bo} Raf Bocklandt, \emph{Noncommutative mirror symmetry for punctured surfaces,} Trans. Amer. Math. Soc., 368 (2016), 429-469, arXiv:1111.3392v2.
 
\bibitem [Dy]{Dy15} Tobias Dyckerhoff, \emph{$\mathbb A^1$-homotopy invariants of topological Fukaya categories of surfaces, } arXiv: 1505.06941.
 
\bibitem [Fo97]{FO97} Kenji Fukaya, Yong-Geun Oh, \emph{Zero-loop open strings in the cotangent bundle and Morse homotopy,}  Asian J. Math. 1 (1997), no. 1, 96-180. 


\bibitem [Gi96]{Gi96} Alexander Givental, \emph{Equivariant Gromov-Witten invariants}, Internat. Math. Res. Notices 13 (1996), 613-663.

\bibitem [HV00]{HV00} K. Hori, C. Vafa, Mirror symmetry, arXiv:hep-th/0002222.


\bibitem [KKOY09]{KKOY09} Anton {Kapustin}, Ludmil {Katzarkov}, Dmitry {Orlov}, Mirroslav {Yotov}, \emph{Homological mirror symmetry for
manifolds of general type,} Cent. Eur. J. Math. 7 (2009), no. 4, 571-605, arXiv:1004.0129.

\bibitem [Ka07]{Ka07} Ludmil {Katzarkov}, \emph{Birational geometry and homological mirror symmetry,} Real and complex singularities,
176-206, World Sci. Publ., Hackensack, NJ, 2007.

\bibitem [Ko94]{Ko94} Maxim {Kontsevich}, \emph{Homological algebra of mirror symmetry,}  Proceedings of the International Congress of
Mathematicians, vol. 1, 2 (Z\"urich, 1994), 120-139, Birkh\"{a}user, Basel, 1995. 

\bibitem [Kn87]{Kn87} Horst Kn\"{o}rrer, \emph{Cohen-Macaulay modules on hypersurface singularities I.}, Invent. Math., v.88 (1987) 153-164.

\bibitem [Ko98]{Ko98} Maxim {Kontsevich}, \emph{Lectures at ENS}, Paris, Spring 1998, notes taken by J. Bellaiche, J.-F. Dat, I. Marin,
G. Racinet and H. Randriambololona, unpublished.

\bibitem [Ko09]{Ko09} Maxim Kontsevich, \emph{Symplectic geometry of homological algebra}, preprint, 2009, \\
 \verb+http://www.ihes.fr/~maxim/TEXTS/Symplectic_AT2009.pdf.+
 

\bibitem [LP13]{LP11} Kevin Lin, Daniel Pomerleano, \emph{Global matrix factorization}. Math. Res. Lett. 20 (2013), no. 1, 91-106, arXiv:1101.5847.

\bibitem [Mi04]{Mi04} Grigory {Mikhalkin}, \emph{Decomposition into pairs-of-pants for complex algebraic hypersurfaces,} Topology 43 (2004), no. 5, 1035-1065.

\bibitem [Na14]{Na14} David Nadler, \emph{Fukaya categories as categorical Morse homology}, SIGMA Symmetry Integrability Geom. Methods Appl. 10 (2014), Paper 018, 47 pp, arXiv:1109.4848.


\bibitem [NZ09]{NZ09} David Nadler, Eric Zaslow. \emph{Constructible sheaves and the Fukaya category,} J. Amer. Math. Soc. 22 (2009), no. 1, 233-286.

\bibitem [Or04]{Or04} Dimitri Orlov,  \emph{Triangulated categories of singularities and D-branes in Landau-Ginzburg models,} Proc.
Steklov Inst. Math. 2004, no. 3 (246), 227-248, arXiv:0302304.

\bibitem[Or09]{Or09} Dimitri Orlov, \emph{Remarks on generators and dimensions of triangulated categories,} Mosc. Math. J. 9 (2009), no. 1, 153-159, back matter, arXiv:08041163.


\bibitem [Or11]{Or11} Dimitri Orlov, \emph{Matrix factorizations for nonaffine LG-models}, Maths. Ann. 353 (2012), no. 1, 95-108.

\bibitem [PS]{PS16} James Pascaleff, Nicol\`{o} Sibilla, \emph{Topological Fukaya category and mirror symmetry for punctured surfaces}, arXiv:1604.06448.

\bibitem [Ru01]{Ru01} H. Rullg{\aa}rd, \emph{Polynomial amoebas and convexity}, preprint, Stockholm University (2001).




\bibitem [Se08]{Se08} Paul {Seidel}, \emph {Fukaya categories and Picard-Lefschetz theory,}  Z\"{u}rich Lectures in Advanced Mathematics. European Mathematical Society (EMS), Z\"{u}rich, 2008.  


\bibitem  [Se12]{Se12} Paul Seidel, \emph {Some speculations on pairs-of-pants decompositions and Fukaya categories,} Surveys in differential geometry, Vol. 17, 411-425, Int. Press, Boston, MA, 2012, arXiv:1004.0906.


\bibitem [SYZ96]{SYZ96} Andrew {Strominger}, S.-T. {Yau}, Eric {Zaslow}, \emph{Mirror symmetry is T-duality,} Nuclear Phys. B 479 (1996), no. 1-2, 234-259.
 
\bibitem [Vi99] {Vi99} Claude {Viterbo}, \emph{Functors and computations in Floer homology with applications, Part I}, Geom.
Funct. Anal. 9 (1999), no. 5, 985-1033.



\end{thebibliography}
\end{document}